\newtheorem{thm}{Theorem}[section]
\newtheorem{lem}{Lemma}[section]
\newtheorem{Prop}{Proposition}[section]
\newtheorem*{St*}{Statement}
\newtheorem*{theorem*}{Theorem}
\theoremstyle{remark}
\theoremstyle{definition}
\theoremstyle{remark}
\newtheorem{oss}{Remark}[section]
\newcommand{\be}{\begin{equation}}
\newcommand{\ee}{\end{equation}}
\newcommand{\R}{\mathbb{R}}
\newcommand{\RP}{\mathbb{RP}}
\newcommand{\N}{\mathbb{N}}
\newcommand{\Het}{\mathbb{H}}
\newcommand{\OHet}{\overline{\mathbb{H}}}
\newcommand{\He}{\mathbb{H}_{\varepsilon}}
\newcommand{\ca}[2]{\mathcal{#1}_{#2}}
\newcommand{\spt}[1]{\text{spt}\,\|#1\|}
\newcommand\res{\mathop{\hbox{\vrule height 7pt width .5pt depth 0pt
\vrule height .5pt width 6pt depth 0pt}}\nolimits}
\def\eps{\mathop{\varepsilon}}
\def\Ece{\mathop{\mathcal{E}_{\varepsilon}}}
\def\Hc{\mathop{\mathcal{H}}}
\def\s{\sigma}
\def\Om{\Omega}
\def\om{\omega}
\def\p{\partial}
\def\dm{d_{\overline{M}}}
\def\eps{\mathop{\varepsilon}}
\def\Om{\Omega}
\def\om{\omega}
\def\p{\partial}
\newcommand{\Rc}[1]{\text{Ric}_{#1}} 
\DeclareMathAlphabet{\mathscr}{OT1}{pzc}{m}{it}
\begin{document} 

\title{\textbf{Multiplicity-$1$ minmax minimal hypersurfaces in manifolds with positive Ricci curvature}}
\author{Costante Bellettini\\
University College London}
\date{}

\maketitle

\begin{abstract}
We address the one-parameter minmax construction, via Allen--Cahn energy, that has recently lead to a new proof of the existence of a closed minimal hypersurface in an arbitrary compact Riemannian manifold $N^{n+1}$ with $n\geq 2$ (see Guaraco's work \cite{Gua}). We obtain the following multiplicity-$1$ result: if the Ricci curvature of $N$ is positive then the minmax Allen--Cahn solutions concentrate around a multiplicity-$1$ hypersurface, that may have a singular set of dimension $\leq n-7$. This result is new for $n\geq 3$ (for $n=2$ it is also implied by the recent work \cite{ChoMan} by Chodosh--Mantoulidis). The argument developed here is geometric in flavour and exploits directly the minmax characterization of the solutions. An immediate corollary is that every compact Riemannian manifold $N^{n+1}$ with $n\geq 2$ and positive Ricci curvature admits a two-sided closed minimal hypersurface, possibly with a singular set of dimension at most $n-7$. This existence result also follows from multiplicity-$1$ results developed within the Almgren--Pitts framework, see \cite{KMN}, \cite{MarNev2}, \cite{Ram}, \cite{Zhou3}). 
\end{abstract}

\section{Introduction}
The existence of a closed minimal hypersurface in an arbitrary compact Riemannian manifold $N^{n+1}$ ($n\geq 2$) is a fundamental result that dates back to the early 80s: the minmax procedure developed by Almgren and Pitts \cite{Alm} \cite{Pit} leads to a stationary integral varifold whose support is, by the regularity/compactness theory of Schoen--Simon--Yau \cite{SSY} and Schoen--Simon \cite{SS}, a smoothly embedded hypersurface except possibly for a singular set of codimension $7$ or higher. This existence result has been recently reproven, using an alternative and more classical minmax approach based on an Allen--Cahn approximation scheme (summarized briefly in Section \ref{recall_Gua}), through the combined efforts of Guaraco, Hutchinson, Tonegawa, Wickramasekera \cite{Gua}, \cite{HT}, \cite{Ton}, \cite{TonWic}, \cite{Wic}. As in the former approach, one obtains the minimal hypersurface as an integral varifold, that turns out to be smooth away from a singular set of codimension $\geq 7$ (moreover, its regular part has Morse index at most one by \cite{Hie}, \cite{Gas}). In general, in both approaches, the hypersurface may have several connected components, each a priori with an integer multiplicity $\geq 1$.

In the Almgren--Pitts framework, Ketover--Marques--Neves \cite{KMN} show (relying also on \cite{Zhou2}) that, when $N^{n+1}$ is orientable with positive Ricci curvature and $2\leq n\leq 6$, the minimal hypersurface is two-sided and has multiplicity $1$. This result is extended to $n\geq 7$ by Ram\'{\i}rez-Luna in \cite{Ram} (relying on \cite{Zhou1}).
Zhou \cite{Zhou3} obtains multiplicity-$1$ and two-sidedness in the case $2\leq n\leq 6$ for bumpy metrics and for metrics with positive Ricci, more generally for one- or multi-parameter minmax, thereby confirming a well-known conjecture of Marques--Neves \cite[1.2]{MarNev2} (see also \cite[Addendum]{MarNev2}).

Marques--Neves's conjecture has a natural counterpart for minmax constructions via Allen--Cahn. For $n=2$ it follows from Chodosh--Mantoulidis \cite{ChoMan} (valid more generally for solutions with bounded Morse index, not necessarily minmax solutions) that the minimal surface obtained by the (one- or multi-parameter) Allen-Cahn minmax is two-sided and has multiplicity $1$, in the case of bumpy metrics and in the case of metrics with positive Ricci.

The multiplicity issue is ubiquitous in variational problems and its resolution has far-reaching consequences, specifically (but not only) in minmax constructions, see e.g.~\cite{ChoMan}, \cite{MarNev2}, \cite{Zhou3} and, in yet another minmax approach (for surfaces in arbitrary codimension), \cite{PigRiv}. 
We obtain here the following multiplicity-$1$ result (new for $n\geq 3$), which applies to the Allen--Cahn one-parameter minmax construction in \cite{Gua}. 

\begin{thm}
\label{thm:mult1_Ricci7}
Let $N$ be a compact Riemannian manifold of dimension $n+1$ with $n\geq 2$ and with positive Ricci curvature. 
Then the Allen--Cahn minmax in \cite{Gua} yields on $N$ a multiplicity-$1$ smooth minimal hypersurface $M$ with $\text{dim}\left(\overline{M}\setminus M\right)\leq n-7$. 
\end{thm}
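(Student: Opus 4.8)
The plan is to exploit the minmax characterization of the Allen–Cahn solutions together with the positivity of the Ricci curvature, arguing by contradiction against multiplicity $\geq 2$. The starting point is the Guaraco minmax construction: one has a family $\{u_{\varepsilon_k}\}$ of critical points of the Allen–Cahn energy $E_{\varepsilon_k}$ on $N$, with uniformly bounded energy and Morse index $\leq 1$, whose associated energy measures converge (after rescaling by a dimensional constant) to $\|V\|$ for a stationary integral varifold $V$; by the regularity theory of Tonegawa–Wickramasekera and Wickramasekera, $\text{spt}\|V\|$ is a smooth minimal hypersurface $\overline{M}$ away from a singular set of dimension $\leq n-7$, and $V = \sum_i q_i |M_i|$ with positive integer multiplicities $q_i$. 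One must rule out $\sum_i q_i \geq 2$.

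First I would recall that positive Ricci curvature forbids the existence of a stable (two-sided) minimal hypersurface, and more generally constrains minimal hypersurfaces of low index; in particular $\overline{M}$ is connected (otherwise one component would be stable, or one combines components to violate the index bound) and two-sided (a one-sided hypersurface has an orientable double cover which would be stable). The crux is then: if the multiplicity $q$ on the connected two-sided hypersurface $\overline{M}$ is $\geq 2$, the Allen–Cahn solutions $u_{\varepsilon_k}$ must develop, transversally to $\overline{M}$, a profile consisting of $q$ nearby copies of the one-dimensional heteroclinic, i.e. roughly $q$ parallel ``sheets.'' I would analyze the linearized operator / second variation along these sheets: the interaction between consecutive sheets, governed by the exponentially small Allen–Cahn interaction potential together with the curvature term $|A|^2 + \text{Ric}(\nu,\nu) > 0$, produces at least $q-1$ (in fact, for $q\geq 2$, at least one extra) linearly independent directions in which the energy can be decreased, or a nonpositive second variation on a space of dimension exceeding $1$. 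This is the mechanism by which multiplicity $\geq 2$ is incompatible with index $\leq 1$: one constructs $\geq 2$-dimensional negative subspaces for $E_{\varepsilon_k}''(u_{\varepsilon_k})$ by taking suitable combinations of the transversal derivative of $u_{\varepsilon_k}$ localized near distinct sheets, exploiting $\text{Ric}>0$ to control the sign of the relevant quadratic form.

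The main obstacle I expect is making the multi-sheet picture rigorous \emph{without} already knowing a good description of $u_{\varepsilon_k}$ near $\overline{M}$: one has only the varifold convergence and the index bound, not an a priori layer decomposition. So the real work is to extract, purely from the stationarity of $V$, the multiplicity $q$, and the index constraint, enough structure to run the second-variation argument — for instance by constructing explicit competitor vector fields supported in a tubular neighborhood of $\overline{M}$, pulling back one-dimensional model functions scaled at rate $\varepsilon_k$ and modulated by eigenfunctions of the Jacobi-type operator $-\Delta_{\overline{M}} - (|A|^2 + \text{Ric}(\nu,\nu))$ on $\overline{M}$, and showing that for $q \geq 2$ these yield a genuinely $2$-dimensional subspace on which $E_{\varepsilon_k}'' < 0$ for all small $k$, contradicting $\text{index}(u_{\varepsilon_k}) \leq 1$. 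Positivity of Ricci enters decisively here: it guarantees the lowest eigenvalue of the Jacobi operator is negative (so each sheet is unstable) and controls cross terms, so that the ``doubled'' test function space stays negative-definite. Once multiplicity $1$ is established, two-sidedness and smoothness (away from the $\leq n-7$ singular set) follow from the regularity theory already invoked, completing the proof of Theorem~\ref{thm:mult1_Ricci7}.
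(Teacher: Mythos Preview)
Your approach is fundamentally different from the paper's and, as written, has a genuine gap that you yourself flag but do not close.

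The paper does \emph{not} analyse the Allen--Cahn solutions $u_{\varepsilon_k}$ at all, nor does it use their Morse index bound to produce two negative directions. Instead it exploits the minmax characterisation directly: given any minimal hypersurface $M$ as produced by the construction, it builds, for each small $\varepsilon$, an explicit admissible path in $W^{1,2}(N)$ from $-1$ to $+1$ whose Allen--Cahn energy stays below $2\mathcal{H}^n(M)$ by a fixed geometric amount. This forces $\limsup c_\varepsilon < 2\mathcal{H}^n(M)$; since $\|V\|(N)=\lim c_{\varepsilon_i}$, any connected component carrying multiplicity $\theta\geq 2$ would give $\theta\mathcal{H}^n(M)\leq \|V\|(N)<2\mathcal{H}^n(M)$, a contradiction. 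The path is built by hand (Allen--Cahn approximations of suitable immersions, a ``cut a ball / deform / fill in'' manoeuvre exploiting the instability of the double cover in positive Ricci, and a mean-convex gradient-flow to reach $+1$), and requires no structural information about $u_{\varepsilon_k}$.

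Your index-based strategy is the Chodosh--Mantoulidis philosophy, and the paper explicitly notes that this route is only known to succeed for $n=2$. The obstacle you identify --- making the multi-sheet picture rigorous from varifold convergence alone --- is precisely the hard analytic input (pointwise curvature estimates, sheet separation, $C^{2,\alpha}$ graphical decomposition) that is unavailable for $n\geq 3$. Without that, you cannot localise test functions to individual sheets, and the claimed construction of a $2$-dimensional negative subspace for $E_{\varepsilon_k}''$ does not go through. Your sketch also reverses the logic on two-sidedness: in the paper, two-sidedness is a \emph{consequence} of multiplicity $1$ (via $BV$ convergence of $u_{\varepsilon_i}$ and the absence of hidden boundary), not an input. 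The claim that a one-sided $M$ would have a stable double cover is not correct in general.

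In short: what you propose would be a genuinely alternative proof if it could be completed, but the missing sheet-decomposition step is the whole difficulty in higher dimensions, and the paper's approach was designed precisely to bypass it.
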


While the two frameworks are different in spirit (see also Remark \ref{oss:compare_boundaries_path}), Theorem \ref{thm:mult1_Ricci7} can be viewed as the Allen--Cahn counterpart of the Almgren--Pitts results contained in \cite{KMN}, \cite{Ram}, \cite{Zhou2}, \cite{Zhou1}. 
Implicit in the multiplicity-$1$ conclusion of Theorem \ref{thm:mult1_Ricci7}, we have an alternative proof of the following existence result for \textit{two-sided} minimal hypersurfaces (which is known from \cite{KMN} and \cite{Zhou3} for $2\leq n\leq 6$ and \cite{Ram} for $n\geq 7$).

\begin{thm}
\label{thm:two-sided}
In any compact Riemannian manifold of dimension $n+1$ with $n\geq 2$ and with positive Ricci curvature there exists a smooth \textit{two-sided} minimal hypersurface $M$ with $\text{dim}\left(\overline{M}\setminus M\right)\leq n-7$.
\end{thm}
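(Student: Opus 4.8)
\noindent\textbf{Proof proposal for Theorem~\ref{thm:two-sided}.} The plan is to obtain Theorem~\ref{thm:two-sided} as an immediate consequence of Theorem~\ref{thm:mult1_Ricci7}. The only point that needs checking is that, in the Allen--Cahn framework, a multiplicity-$1$ minmax limit is automatically two-sided: the diffuse interfaces of the Allen--Cahn solutions come with a built-in co-orientation, namely the sign of the solution, and this co-orientation passes to the limit hypersurface precisely when there is no cancellation of sheets, i.e.\ precisely in the multiplicity-$1$ case.

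First I would recall the phase-separation picture for Allen--Cahn critical points with uniformly bounded energy (Hutchinson--Tonegawa~\cite{HT}, Tonegawa~\cite{Ton}, Tonegawa--Wickramasekera~\cite{TonWic}). Let $u_{\varepsilon_i}$ be the minmax solutions produced by \cite{Gua} along a subsequence $\varepsilon_i\to 0$ realizing the conclusion of Theorem~\ref{thm:mult1_Ricci7}, and let $V$ be the stationary integral varifold to which (a constant multiple of) their energy measures converge, so that $\mathrm{spt}\|V\|=\overline M$ and the regular part of $V$ is $M$. Then $u_{\varepsilon_i}\to 2\chi_E-1$ in $L^1(N)$ for some set $E\subset N$ of finite perimeter; the reduced boundary $\partial^*E$ is contained in $\overline M$; and the classical odd/even dichotomy holds, namely for $\mathcal H^n$-a.e.\ $x\in\partial^*E$ the density $\Theta^n(\|V\|,x)$ is a positive odd integer, while for $\mathcal H^n$-a.e.\ $x\in\overline M\setminus\partial^*E$ the density $\Theta^n(\|V\|,x)$, when non-zero, is a positive even integer.

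Then I would feed in the multiplicity-$1$ conclusion of Theorem~\ref{thm:mult1_Ricci7}: $\Theta^n(\|V\|,x)=1$ for $\mathcal H^n$-a.e.\ $x\in M$. Since a positive even integer cannot equal $1$, the set $M\setminus\partial^*E$ must be $\mathcal H^n$-null; together with $\partial^*E\subseteq\overline M$ and $\dim(\overline M\setminus M)\le n-7$, this yields that $M$ and $\partial^*E$ coincide up to an $\mathcal H^n$-null set. It follows that, near any point of $M$, the Caccioppoli set $E$ agrees (up to a Lebesgue-null set) with one of the two local connected components of $N\setminus M$; hence the measure-theoretic outer unit normal $\nu_E$ restricts to a globally defined, smooth unit normal vector field along $M$, and $M$ is two-sided. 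The bound $\dim(\overline M\setminus M)\le n-7$ is inherited verbatim from Theorem~\ref{thm:mult1_Ricci7}, completing the argument.

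A word on where the difficulty lies: there is no genuine obstacle in this deduction, as the entire content of the statement sits in Theorem~\ref{thm:mult1_Ricci7}. It is nonetheless worth stressing the contrast with the Almgren--Pitts setting, where two-sidedness of a multiplicity-$1$ minmax hypersurface requires a separate argument: in the Allen--Cahn framework it comes for free, the only point demanding care being the precise recollection of the odd/even multiplicity dichotomy of \cite{HT}, \cite{Ton} and of the fact that the limit is a $\{\pm1\}$-valued function of bounded variation, so that a Caccioppoli set --- and with it a co-orientation --- is available from the start.
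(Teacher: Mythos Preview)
Your proposal is correct and follows essentially the same route as the paper: both deduce two-sidedness from Theorem~\ref{thm:mult1_Ricci7} by noting that the Allen--Cahn solutions converge in $BV$ to a $\{\pm 1\}$-valued function, so that the limit varifold is the reduced boundary of a Caccioppoli set and inherits a global unit normal. Your version is slightly more explicit in invoking the odd/even density dichotomy from \cite{HT}, \cite{Ton} to rule out ``hidden boundary'', whereas the paper simply asserts this in one line; the content is the same.
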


\begin{oss}
\label{oss:otherconsequences2}
It also follows easily that $M$ in Theorem \ref{thm:mult1_Ricci7} is connected.
\end{oss}

\subsection{Strategy}

To obtain the multiplicity-$1$ result we exploit directly the minmax characterization (rather than finite index properties). More precisely, we do not analyse the Allen--Cahn solutions constructed in \cite{Gua}, that concentrate on the minimal hypersurface; we only retain the minmax value that they achieve and prove the following result, from which Theorem \ref{thm:mult1_Ricci7} is easily deduced.

\begin{thm}
\label{thm:compare2M_Ricci7}
Let $N$ be a compact Riemannian manifold of dimension $n+1$, $n\geq 2$, and with positive Ricci curvature. Let $M \subset N$ be any smooth minimal hypersurface such that $\text{dim}\left(\overline{M}\setminus M\right)\leq n-7$, $M$ is stationary in $N$, and for every $x\in\overline{M}$ there exists a geodesic ball in $N$ centred at $x$ in which $M$ is stable. Then the minmax value $c_{\eps}$ obtained by \cite{Gua} (for $\eps<1$) satisfies 
$$\limsup_{\eps \to 0} c_{\eps}<2{\Hc}^n(M).$$ 
\end{thm}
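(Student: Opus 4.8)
Here is how I would approach it.

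\medskip

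The plan is to bound $c_{\eps}$ from above by exhibiting, for every small $\eps$, an \emph{admissible competitor} for the one-parameter Allen--Cahn minmax recalled in Section \ref{recall_Gua} whose maximal energy stays strictly below $2\mathcal H^n(M)$. That minmax has the form $c_{\eps}=\inf_{\gamma}\sup_{t}E_{\eps}(\gamma(t))$, the infimum being over $H^1(N)$-paths $\gamma$ joining the constant $-1$ to the constant $+1$, so \emph{any} such path is admissible; moreover, by the transition-layer (``thickening'') construction used in Section \ref{recall_Gua} to relate $c_{\eps}$ to the Almgren--Pitts width, it suffices to produce an $L^1$-continuous family of Caccioppoli sets $\{\Omega_t\}_{t\in[0,1]}$ in $N$ with $\Omega_0=\emptyset$, $\Omega_1=N$ and $\sup_{t}\mathcal H^n(\partial^*\Omega_t)<2\mathcal H^n(M)$, since thickening such a sweepout yields $H^1$-paths $\gamma^{\eps}$ (from $-1$ to $+1$) with $\limsup_{\eps\to0}\sup_{t}E_{\eps}(\gamma^{\eps}(t))\le\sup_{t}\mathcal H^n(\partial^*\Omega_t)$ in the normalisation of \cite{Gua}.

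\medskip

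Since $\Rc{N}>0$, the Bochner technique gives $b_1(N)=0$, hence every closed two-sided hypersurface of $N$ separates; so $M$ is either (i) two-sided, with $M=\partial^*\Omega$, or (ii) one-sided. The common mechanism is the Riccati (Jacobi-field) comparison along unit-speed geodesics leaving $M$ orthogonally: because $M$ is minimal, the mean curvature of the equidistant $\{\dm=s\}$ vanishes at $s=0$, while its $s$-derivative equals $-|A|^2-\Rc{N}(\nu,\nu)<0$; hence $s\mapsto\mathcal H^n(\{\dm=s\})$ is strictly decreasing on $(0,\infty)$, with $\mathcal H^n(\{\dm=s\})=2\mathcal H^n(M)-c_1s^2+O(s^3)$, where $c_1:=\int_M(|A|^2+\Rc{N}(\nu,\nu))>0$ (and the analogue with $\mathcal H^n(M)$ in place of $2\mathcal H^n(M)$ in case (i)). In case (i) one takes $\Omega_t=\{d_\Omega<s(t)\}$ with $d_\Omega$ the signed distance and $s(t)$ decreasing from above $\max d_\Omega$ to below $\min d_\Omega$; then $\partial^*\Omega_t=\{d_\Omega=s(t)\}$ and the expansion gives $\sup_{t}\mathcal H^n(\partial^*\Omega_t)=\mathcal H^n(M)<2\mathcal H^n(M)$, so case (i) is immediate.

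\medskip

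The substance is case (ii): a one-parameter sweepout of $N$ cannot ``cross $M$ once'', so it must pass through a configuration concentrating on $M$ with multiplicity $2$, and the obvious construction --- shrinking the twisted tube $T_s=\{\dm<s\}$ uniformly to a point --- attains exactly the forbidden value $2\mathcal H^n(M)$ in the limit. The plan is to undercut it using the quadratic gain $c_1s_0^2>0$. Fix a small $s_0>0$. \emph{Stage 1:} let $\Omega_t=N\setminus\overline{T_{s(t)}}$ with $s(t)$ decreasing from $\max\dm$ (so $\Omega_0=\emptyset$) to $s_0$; then $\partial^*\Omega_t=\{\dm=s(t)\}$, so by monotonicity $\mathcal H^n(\partial^*\Omega_t)\le\mathcal H^n(\{\dm=s_0\})=2\mathcal H^n(M)-c_1s_0^2+O(s_0^3)$. \emph{Stage 2:} absorb $T_{s_0}$ into $\Omega_t$, leaving the twisting locus for last. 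As $M$ is one-sided, fix an $(n-1)$-dimensional embedded hypersurface $\Gamma\subset M$ Poincar\'e-dual to $w_1$ of the normal bundle of $M$, so that $M\setminus\Gamma$ carries a trivial normal bundle in $N$; absorb the portion of $T_{s_0}$ lying over $U_\tau$, where $U_\tau\subset M$ grows from a point --- e.g.\ as geodesic balls of radius $\rho$ --- out to $M\setminus\mathcal N_\eta(\Gamma)$ for a fixed $\eta$ with $s_0\ll\eta\ll1$, and finally dispose of the residual $\{-1\}$-region lying over $\mathcal N_\eta(\Gamma)$, which sits inside a tube of radius $O(\eta)$ about the codimension-$2$ set $\Gamma$ and hence has perimeter $O(\eta\,\mathcal H^{n-1}(\Gamma))$. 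During the absorption, $\mathcal H^n(\partial^*\Omega_t)\approx\big(2\mathcal H^n(M)-2\mathcal H^n(U_\tau)-s_0^2\!\int_{M\setminus U_\tau}(|A|^2+\Rc{N}(\nu,\nu))\big)+2s_0\,\mathcal H^{n-1}(\partial U_\tau)$, and one verifies this stays below $2\mathcal H^n(M)$: for $\rho\gtrsim s_0$ the volume gain $\sim\rho^n$ beats the side-wall cost $\sim s_0\rho^{n-1}$, while for $\rho$ small the quadratic Ricci gain $c_1s_0^2$ dominates the cost $\sim s_0\rho^{n-1}$ --- for $n\ge3$ the two regimes overlap once $s_0$ is small; for $n=2$ this last point needs slightly more care (and is anyway subsumed by \cite{ChoMan}).

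\medskip

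The main obstacle is precisely this last accounting in case (ii): $2\mathcal H^n(M)$ is the value the naive sweepout reaches, so strictness must be squeezed out of the competition between the $O(s_0^2)$ curvature gain and the $O(s_0)$ side-wall cost of reshaping the $\{-1\}$-region inside the tube --- which is what forces one to grow the absorbed region from a point and to exploit that $\Gamma$ is codimension $2$ in $N$. A second, pervasive point --- already in case (i) --- is running the Riccati/area and tubular-neighbourhood constructions up to the singular set $\overline M\setminus M$ (and to the closure of $\Gamma$): one excises a tiny tubular neighbourhood of it, whose perimeter is $O(\rho^{7})\to0$ as the radius $\rho\to0$ because $\dim(\overline M\setminus M)\le n-7<n-1$, and one uses the local-stability hypothesis, via the Schoen--Simon curvature estimates, to control $M$ uniformly near $\overline M$ so that $\dm$ and its equidistants behave as required. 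Finally, the $L^1$-continuity of $\{\Omega_t\}$ (hence of the thickened $H^1$-path) and the limit $\limsup_{\eps}\sup_{t}E_{\eps}(\gamma^{\eps}(t))\le\sup_{t}\mathcal H^n(\partial^*\Omega_t)$ are obtained exactly as in Section \ref{recall_Gua}.
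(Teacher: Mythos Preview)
Your approach is genuinely different from the paper's, and has real gaps.

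\textbf{What the paper actually does.} The proof never passes through Caccioppoli sets; it works entirely with $W^{1,2}(N)$-functions. The mechanism is not the second-order area drop of equidistants, but the \emph{instability of the double-cover immersion} $\iota:\tilde M\to N$. A capacity argument (Lemma~\ref{lem:unstableregion_Ricci7}) produces $\tilde\phi\geq 0$ supported away from a small ball $B\subset M$ (and away from the singular set) with negative second variation, yielding an area drop $\tau>0$ along a one-sided deformation that fixes $B$. The path is then: (a) from $-1$ to an Allen--Cahn approximation $f$ of $2|M\setminus B|$ (explicit one-dimensional profiles, energy $\leq 2\mathcal H^n(M)-2\mathcal H^n(B)+o(1)$); (b) deform along $\tilde\phi$ to reach energy $\leq 2\mathcal H^n(M)-2\mathcal H^n(B)-\tau+o(1)$; (c) ``close the hole at $B$'' continuously (the multiplicity-2 profile $\Psi$ can be slid from $-1$ to the double bump), reaching energy $\leq 2\mathcal H^n(M)-\tau+o(1)$; (d) run the forced negative gradient flow of $\mathcal F_{\eps,\mu_\eps}$, using a mean-convex barrier built from $\dm$ (Proposition~\ref{Prop:negative_Laplacian_2}, Lemma~\ref{lem:barrier_m}), to the stable constant, then to $+1$. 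The strict gap is $\min\{\tau/2,\mathcal H^n(B)/2\}$, fixed geometric quantities. The key point, emphasised in Remark~\ref{oss:compare_boundaries_path}, is that removing/inserting a multiplicity-2 ball is a \emph{continuous} operation at the Allen--Cahn level---precisely what cannot be done with boundaries without a neck and the attendant catenoid-type estimate.

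\textbf{Gaps in your proposal.} (i)~Your own accounting shows the scheme breaks at $n=2$: the overlap of the ``small $\rho$'' regime ($\rho^{n-1}\lesssim s_0$) and ``large $\rho$'' regime ($\rho\gtrsim s_0$) requires $s_0^{1/(n-1)}\gg s_0$, which fails for $n=2$. Deferring to \cite{ChoMan} does not prove the theorem as stated. The paper's argument is uniform in $n\geq 2$. (ii)~You attribute to Section~\ref{recall_Gua} a ``thickening'' of $L^1$-continuous Caccioppoli sweepouts into $W^{1,2}$-paths with the correct energy limit; that section contains no such construction---it merely recalls the minmax. The thickening you need requires your $\partial\Omega_t$ to vary smoothly enough that the signed distance yields a \emph{continuous} path in $W^{1,2}$, which for your piecewise-smooth boundaries with moving corners is nontrivial. (iii)~You only check the endpoints of Stage~2; the intermediate regime, and the interpolation from a growing geodesic ball to $M\setminus\mathcal N_\eta(\Gamma)$ (which changes topology), is not verified. (iv)~The singular set is hand-waved; the paper spends Sections~\ref{distance}--\ref{level_sets} establishing that $\dm$ is locally semiconcave, $\overline{S_{\dm}}$ is countably $n$-rectifiable, and $\Delta\dm\leq 0$ distributionally---these are needed both for the area estimates on $\Gamma_t$ and for the barrier in step~(d).

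In short: your route is the boundaries/KMN-style one the paper explicitly contrasts with; it is plausible for $n\geq 3$ after substantial work, but as written it is a sketch with a genuine failure at $n=2$ and several unproven steps.
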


\begin{oss}
The assumptions on $M$ in Theorem \ref{thm:compare2M_Ricci7} are valid for any minimal hypersurface produced by the minmax in \cite{Gua} (using \cite{HT}, \cite{Ton}, \cite{TonWic}, \cite{Wic}). Then it is readily checked that Theorem \ref{thm:mult1_Ricci7} follows from Theorem \ref{thm:compare2M_Ricci7}.
\end{oss}

\begin{oss}
\label{oss:otherconsequences}
It is not hard to check that, under the assumptions of Theorem \ref{thm:mult1_Ricci7}, the area of the minmax hypersurface is less than or equal to that of an arbitrary two-sided minimal hypersurface in $N$ that has the properties listed for $M$ in Theorem \ref{thm:compare2M_Ricci7}.
\end{oss}

\begin{oss}
Part of the arguments developed here does not make use of the Ricci positive assumption. In fact, analogues of Theorems \ref{thm:mult1_Ricci7} and \ref{thm:two-sided} in the case in which $2\leq n\leq 6$ and $N$ is endowed with a bumpy metric will appear in \cite{B}. 
\end{oss}

We will now outline the proof of Theorem \ref{thm:compare2M_Ricci7}. Given $M$ as in Theorem \ref{thm:compare2M_Ricci7}, the idea is to produce, for all sufficiently small $\eps$, a continuous path in $W^{1,2}(N)$ that joins the constant $-1$ to the constant $+1$ and such that the Allen--Cahn energy evaluated along the path stays below $2\mathcal{H}^n(M)$ by a fixed positive amount independent of $\eps$ (determined only by geometric properties of $M \subset N$). Since this is one of the admissible paths for the minmax construction in \cite{Gua}, the inequality in Theorem \ref{thm:compare2M_Ricci7} must hold.

\medskip

The construction of the path is geometric in flavour and employs classical tools (coarea formula, semi-linear parabolic theory). For simplicity, in this introduction we illustrate it mainly in the case $2\leq n\leq 6$, so that $M$ is smooth and closed. We think of $M$ with multiplicity $2$ as an immersed two-sided hypersurface, namely its double cover $\tilde{M}$ with the standard projection. This immersion, that we denote by $\iota:\tilde{M}\to N$, is minimal and unstable (by the positiveness of the Ricci curvature). It is possible to find a (sufficiently small) geodesic ball $B\subset M$ such that the lack of stability still holds for deformations that do not move $B$ (this follows by a capacity argument). We then find a deformation of $\iota$ that is area-decreasing on some time interval $[0,t_0]$. This deformation is depicted in the top row of Figure \ref{fig:cut_B_deform}. (We can choose the initial speed of the deformation to be non-negative on $\tilde{M}$, therefore the deformation ``pushes away from $M$''.) We denote by $2\mathcal{H}^n(M)-\tau$ the area of the immersion at time $t_0$, for some $\tau>0$.
If we cut out $B$ from $M$ we are left with an immersion with boundary, namely $\left.\iota\right|_{\tilde{M}\setminus \iota^{-1}(B)}$. We can restrict the previous deformation to $\left.\iota\right|_{\tilde{M}\setminus \iota^{-1}(B)}$, obtaining an area-decreasing deformation (at fixed boundary) on the time interval $[0,t_0]$. This time the area changes from $2\mathcal{H}^n(M) -2\mathcal{H}^n(B)$ to $2\mathcal{H}^n(M) -2\mathcal{H}^n(B)-\tau$. This deformation is depicted in middle row of Figure \ref{fig:cut_B_deform}.
Now we proceed to close the hole at $B$ continuously (bottom row of Figure \ref{fig:cut_B_deform}), reaching, say in in time $1$, the same immersion depicted in the top-right picture of Figure \ref{fig:cut_B_deform}. (In view of the forthcoming construction, it is helpful to think of closing the hole at $B$ by inserting a weighted copy of $B$ and letting the weight increase from $0$ to $2$. Abusing language, we will talk in this introduction of immersions also to indicate these ``weighted immersions''.) The area increases from $2\mathcal{H}^n(M) -2\mathcal{H}^n(B)-\tau$ to $2\mathcal{H}^n(M)-\tau$. Therefore, in going from the middle-left picture to the bottom-right picture of Figure \ref{fig:cut_B_deform}, we have produced a ``path of immersions'' along which the value of the area stays stricly below $2\mathcal{H}^n(M)$, at least by $\min\{\tau,2\mathcal{H}^n(B)\}$, a fixed amount that only depends on the geometry of $M\subset N$.

This path of immersions is then ``reproduced at the Allen--Cahn level'', i.e.~replaced by a continuous path $\gamma:[0,{t_0}+1]\to W^{1,2}(N)$. Each function in the image if this curve is a suitable ``Allen--Cahn approximation'' of the corresponding immersion. To construct this, one fits one-dimensional Allen--Cahn solutions in the normal bundle to the immersion, respecting multiplicities: at points with multiplicity $1$ and $2$ we will use respectively the top and bottom profiles in Figure \ref{fig:profiles_1d}. The image of the immersion corresponds to points where the function transitions between $-1$ and $+1$, with a double transition for points of multiplicity $2$. The operation of closing the hole at $B$ can be reproduced at the Allen--Cahn level thanks to the multiplicity-$2$ assumuption on $B$: in the normal direction to $B$, the profile of the function goes from being constantly $-1$ to looking like the bottom picture in Figure \ref{fig:profiles_1d}. Moreover, the operation is continuous in $W^{1,2}(N)$. (Working in $W^{1,2}$, hypersurfaces are naturally diffused, and continuous weights are allowed: this justifies the geometric operation of closing the hole by increasing the weight of $B$ from $0$ to $2$, as mentioned above\footnote{In a similar spirit, when we will write an Allen--Cahn approximations of an immersion with boundary, there will be no sharp transition of multiplicity at the boundary: the weight will instead continuously decrease to $0$ in a neighbourhood of the boundary of the hypersurface.}.) The construction of $\gamma$ is done for all sufficiently small $\eps$ (the parameter of the Allen--Cahn energy) and, moreover, for all sufficiently small $\eps$ the Allen--Cahn energy all along $\gamma$ is a close approximation of the area of the corresponding immersions; therefore, for all sufficiently small $\eps$, the energies stay below $2\mathcal{H}^n(M)$ by a fixed ``geometric'' amount $\approx \min\{\tau,2\mathcal{H}^n(B)\}$. 

We now consider $\gamma(0)$ and $\gamma(t_0+1)$ (respectively the Allen--Cahn approximations of the immersions in the middle-left and top-right picture of Figure \ref{fig:cut_B_deform}). For the latter, we use a (negative) Allen--Cahn gradient flow (to which we add a small forcing term). We build a mean convex barrier (by writing a suitable Allen--Cahn approximation of $\iota$), that sits below $\gamma(t_0+1)$. Thanks to this, we show that the flow deforms $\gamma({t_0}+1)$ continuously into a stable Allen--Cahn solution, which has to be the constant $+1$ by the Ricci-positive assumption. Along this flow, the Allen--Cahn energy is controlled by the initial bound $\approx 2\mathcal{H}^n(M) -\tau$.
The function $\gamma(0)$ is $\approx +1$ close to $M\setminus B$ and $\approx -1$ away from a tube around $M\setminus B$: this function can be explicitly connected to the constant $-1$ (continuously in $W^{1,2}$) with approximately decreasing Allen--Cahn energy. (A close geometric operation to think of is to give weight $2$ to $M\setminus B$ and let the weight decrease continuously to $0$.) 
Reversing the latter path, then composing it with $\gamma$ and finally with the path obtained via the flow, we produce the promised continuous path in $W^{1,2}(N)$ that joins $-1$ to $+1$ and has the desired energy control.

\begin{figure}[h]
\centering
\includegraphics[scale=0.4]{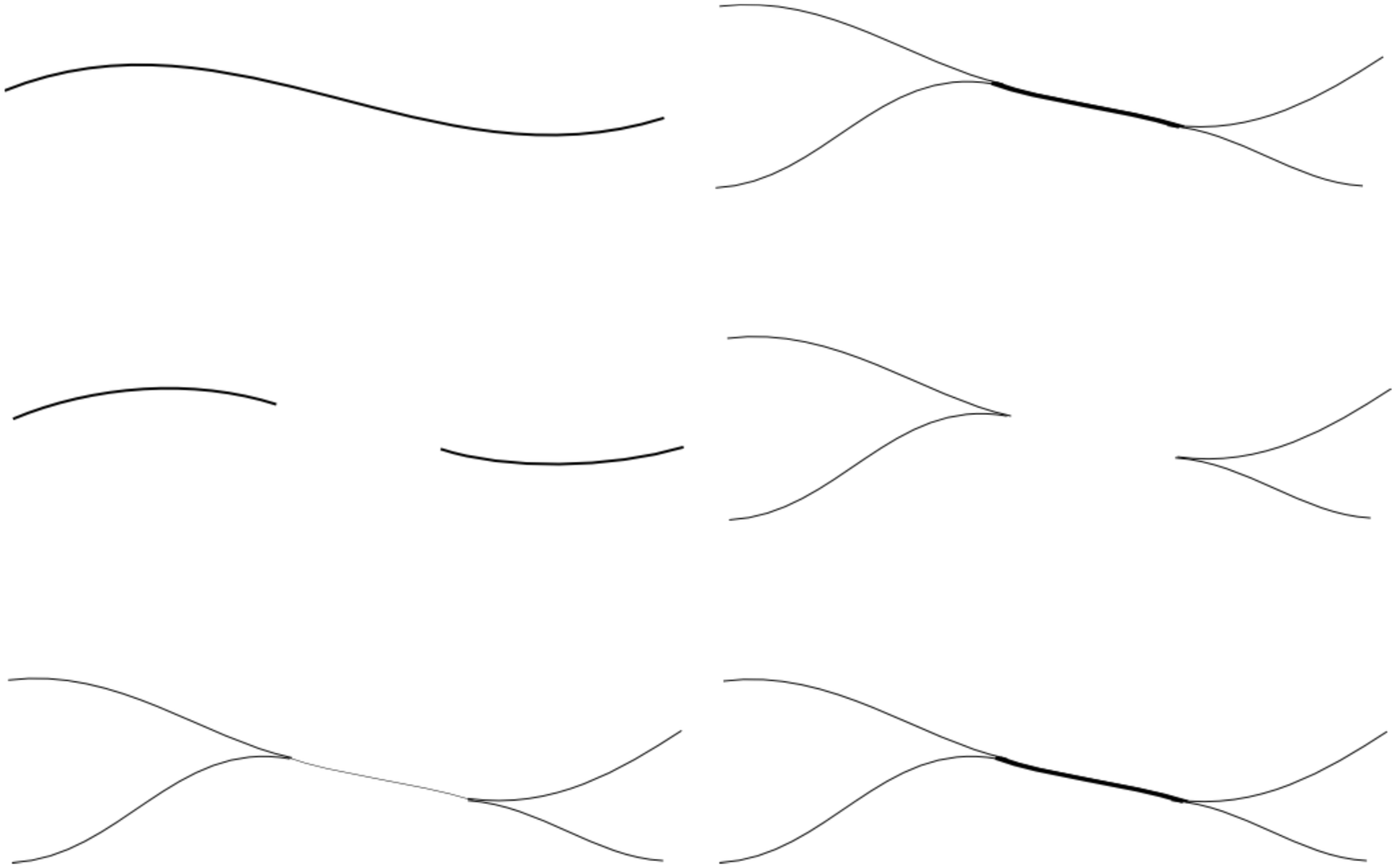}
\put (-260.5, 230){\small{$\iota:\tilde{M}\to N$}}
\put (-270.5, 219){\small{(double cover of $M$)}}
\put (-291.5, 157){\small{$\left.\iota\right|_{\tilde{M}\setminus \iota^{-1}(B)}$}}
\put (-241.5, 157){\small{(make a hole}}
\put (-241.5, 147){\small{at $B$)}}
\put (-105.5, 238){\small{push away from $M$}}
\put (-90.5, 226){\small{keeping $B$ fixed}}
\put (-105.5, 148){\small{push away from $M\setminus B$}}
\put (-236.5, 24){\small{close the hole at $B$ continuously}}
\put (-236.5, 15){\small{(increasing the density)}}
\caption{Cut, deform, fill in. The path of ``immersions'' in the second and third row reaches the same immersion depicted in the top-right picture.}
\label{fig:cut_B_deform}
\end{figure}

\medskip

We stress that the functions $\gamma(t)$, $t\in[0,{t_0}+1]$, that we call ``Allen--Cahn approximations'' of the corresponding immersions, are not solutions of the Allen--Cahn equation, they only realize the ``correct'' energy value. In fact, for $t\neq  {t_0}+1$, we do not even analyse the Allen--Cahn first variation of $\gamma(t)$. (We do so, on the other hand, for $\gamma({t_0}+1)$ and this is important for the gradient flow argument.) The loss of information on the first variation is compensated by the ad hoc structure of the Allen--Cahn approximation: its level sets are by construction sheets over the given immersed hypersurface, so that the Allen--Cahn energy is an effective approximation of area (by the coarea formula) and the geometric information can be translated to the Allen--Cahn level.

\medskip

We digress to comment briefly on the operation of connecting $\gamma(0)$ to the constant $-1$. We could in fact use an Allen--Cahn flow for this step, by first slightly deforming $\gamma(0)$ into another function (with a similar profile, so that it still approximates $2|M\setminus B|$, but with a more effective first variation) and then running the Allen-Cahn flow, that deforms this function to the constant $-1$. We do not argue in this way, since we are able to produce an explicit deformation of $\gamma(0)$ to $-1$, which is elementary and straightforward. We stress, however, that the deformation that we exhibit mimics the Allen--Cahn flow, and is therefore a regularized version of the Brakke flow that starts at $2|M\setminus B|$ and vanishes instantaneously. While the Brakke flow creates a discontinuity in space-time, at the Allen--Cahn level we gain continuity (and the flow reaches $-1$ in time $O(\eps|\log\eps|)$). As we mentioned above, an intuitive geometric counterpart of the deformation connecting $\gamma(0)$ to $-1$, is the one that continuously decreases the weight of $M\setminus B$ from $2$ to $0$ in time $O(\eps|\log\eps|)$. 

A similar remark applies to the portion of path that ``closes the hole at $B$''. If we were to take the $\eps\to 0$ limit we would see that $2|B|$ appears instantaneously and we would go (in a discontinuous way) from the immersion in the middle-right picture to the one in the bottom-right picture of Figure \ref{fig:cut_B_deform}. Again, at the Allen--Cahn level we gain continuity by increasing the weight of $B$ from $0$ to $2$ in time $O(\eps|\log\eps|)$.

\begin{figure}[h]
\centering
\includegraphics[scale=0.45]{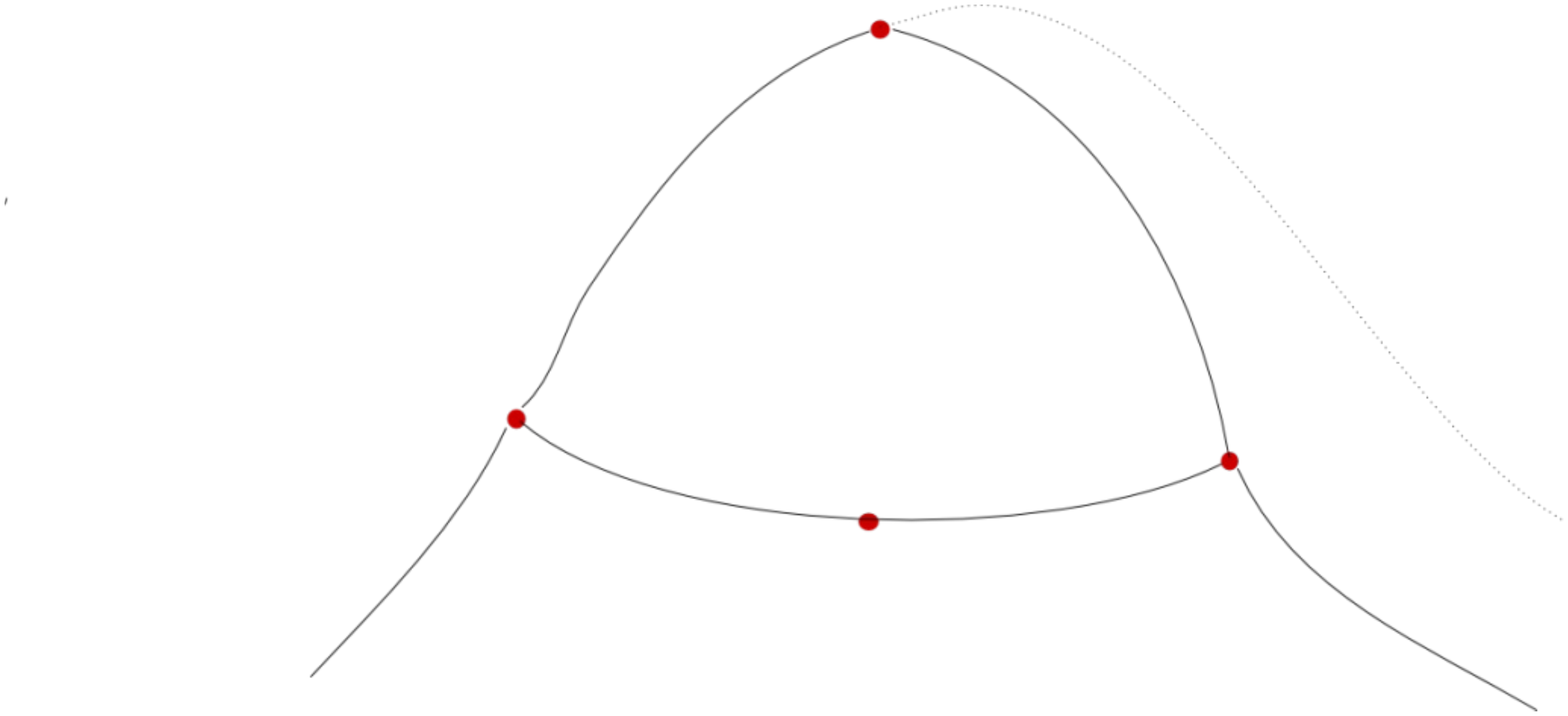}
\put (-199.5, 275){\small{$G^{\eps}_0\approx 2|M|$}}
\put (-251.5, 239){\small{make}}
\put (-251.5, 229){\small{a hole}}
\put (-251.5, 219){\small{at $B$}}
\put (-286.5, 179){\small{$\gamma(0)=f$}}
\put (-286.5, 167){\small{$=g_0$}}
\put (-75.5, 165){\small{$\gamma({t_0}+1)=g_{t_0+1}$}}
\put (-165.5, 146){\small{$g_{t_0}$}}
\put (-200.5, 155){\small{$g_t$}}
\put (-240.5, 143){\small{push away}}
\put (-240.5, 133){\small{from $M\setminus B$}}
\put (-125.5, 152){\small{$g_{t_0+s}$}}
\put (-125.5, 140){\small{close hole}}
\put (-125.5, 130){\small{at $B$}}
\put (-125.5, 120){\small{continuously}}
\put (-90.5, 220){\small{push away from $M$}}
\put (-90.5, 208){\small{keeping $B$ fixed}}
\put (-306.5, 107){\small{$\to -1$}}
\put (-15.5, 105){\small{$\to +1$}}
\caption{Lowering the peak (landscape for the Allen--Cahn energy). The same labels as in Figure \ref{fig:cut_B_deform} are used, to denote deformations that reproduce those in Figure \ref{fig:cut_B_deform}.}
\label{fig:lower_peak}
\end{figure}

\medskip

We emphasise the following point of view on the construction of the path that was sketched above. Consider $\iota:\tilde{M} \to N$: we exhibit two one-sided deformations that decrease area and that can be reproduced for the Allen--Cahn approximations. One (from the top-left to the middle-left picture of Figure \ref{fig:cut_B_deform}) has the geometric effect of removing $2|B|$. The other (from the top-left to the top-right picture of Figure \ref{fig:cut_B_deform}) is a deformation of $\iota$ as an immersion, induced by an initial velocity compactly supported away from $B$. We will denote by $G^{\eps}_0$ in Section \ref{level_sets} the Allen-Cahn approximation of $\iota$. Then, with reference to Figure \ref{fig:lower_peak}, and using the notation $\gamma(0)$, $\gamma(t_0+1)$ respectively for the Allen--Cahn approximations of the immersions in the middle-left and top-right picture of Figure \ref{fig:cut_B_deform}, the two deformations just described, implemented at the Allen--Cahn level, correspond respectively to ``going from $G^{\eps}_0$ to $\gamma(0)$'' and  ``going from $G^{\eps}_0$ to $\gamma({t_0}+1)$''. The two deformations are linearly independent, as the former acts on a compact set containing $B$ while the latter in the complement of this compact set. Note that it may well be that $\iota$ is an immersion with Morse index $1$ (for example, the double cover of an equator of $\RP^3$). The area-decreasing deformation that removes $B$ is clearly not a deformation of $\iota$ as an immersion; it can be reproduced as a continuous deformation at the Allen--Cahn level, thanks to the fact that we have multiplicity $2$ on $B$, so that the profile of $G^{\eps}_0$ in the normal bundle to $B$ looks like the bottom one in Figure \ref{fig:profiles_1d}; this profile can be connected continuously to the constant $-1$ with controlled energy. 

The function $\gamma(0)$ (that will be denoted by $f=g_0$ in Section \ref{all_paths}) can be connected to the constant $-1$ and the function $\gamma({t_0}+1)$ can be connected to the constant $+1$, as described in the sketch given earlier. We thus have a ``recovery path'' for the value $2\mathcal{H}^n(M)$: this path connects $-1$ to $+1$ (passing through $G^{\eps}_0$) and the maximum of the Allen--Cahn energy along this path is $\approx 2\mathcal{H}^n(M)$. What we achieve is to deform this path in the sorroundings of $G^{\eps}_0$, exploiting the information that we have gained on the landscape, specifically we deform the portion between $\gamma(0)$ and $\gamma({t_0}+1)$. From $\gamma(0)$ we use a deformation that reproduces the one in the middle row of Figure \ref{fig:cut_B_deform}. By doing this we reach a function $g_{t_0}$ (notation as in Section \ref{all_paths}). Now we close the hole continuously, replicating the deformation in the bottom row of Figure \ref{fig:cut_B_deform}, reaching the function $\gamma({t_0}+1)$ (that will be denoted by $g_{{t_0}+1}$ in Section \ref{all_paths}). We have thus found a path $\gamma:[0,{t_0}+1]\to W^{1,2}(N)$, from $\gamma(0)$ to $\gamma({t_0}+1)$, that lowers the peak, compared to the initial ``recovery path''. This follows thanks to the fact that the Allen--Cahn energy is a close approximation of the area of the corresponding immersion, so we inherit the estimates that we had for the path of immersions that joins the middle-left picture to the bottom-right picture of Figure \ref{fig:cut_B_deform}.

This shows that the landscape around $G^{\eps}_0$ is reminiscent of one where the Morse index is $\geq 2$. \textit{However} $G^{\eps}_0$ is not a stationary point for the Allen--Cahn energy. In fact, we never need to compute the Allen--Cahn first or second variation along these deformations, it suffices to know that the Allen--Cahn energy at $\gamma(0)$ and $\gamma({t_0}+1)$ is strictly less than its value at $G^{\eps}_0$ (by a fixed amount indepednent of $\eps$).

\begin{oss}
\label{oss:compare_boundaries_path}
It is natural to ask whether the path from $-1$ to $+1$ produced in the earlier sketch can be imitated (e.g.~in an Almgren--Pitts framework) by a one-parameter family of boundaries in $N$. For the portion $\gamma:[0,t_0+1]\to W^{1,2}(N)$, rather than increasing the weight of $B$ from $0$ to $2$ (which cannot be done in the class of boundaries) one can argue by doubling $M\setminus B$ and inserting a small neck at $B$, then pushing this hypersurface away from $M$ without moving the neck (and decreasing the area), then closing the neck. (An operation of this type is analysed in \cite{KMN}. To avoid confusion, we point out that for our path $\gamma$, the nodal sets $\{\gamma_t=0\}_{t\in[t_0,t_0+1]}$ are not cylindrical necks.) It is conceivable that one could then use mean-curvature-flow to drift away from $M$ until extinction time and thus imitate, by using boundaries, the portion of path from $\gamma(t_0+1)$ to $+1$. (The use of a flow for this purpose does not appear to have been investigated in the literature. Generally speaking, gradient flows may be easier to use in the Allen--Cahn framework, since the parabolic problem is semilinear, has long-time existence and singularities do not appear.) For the portion of path that goes from $\gamma(0)$ to the constant $-1$, the spirit of the Allen--Cahn deformation is again different than a deformation of boundaries (compare with \cite{KMN}, \cite{Zhou2}), since its geometric analogues are either a continuous weight-decrease from $2$ to $0$ or a Brakke flow that instantaneously makes $M\setminus B$ disappear. The Allen--Cahn framework allows a very straightforward way to produce this portion of path. (Some extra challenges have to be overcome in \cite{KMN}, for example the catenoid estimate.)
\end{oss}

For $n\geq 7$, we still employ the idea illustrated in low dimensions. Its implementation, however, is rendered somewhat harder by the presence of the singular set: standard tubular neighbourhoods and Fermi coordinates for $M$ (that are essential to fit one-dimensional Allen--Cahn profiles in the normal bundle to $M$) are not available. While the geometric ideas remain the same as in the low-dimensional case, we need to additionally study certain analytic properties. Denote by $\dm:N\to [0,\infty)$ the distance function to $\overline{M}$. The value $\dm(x)$ is always realized by a geodesic (possibly more than one) from $x$ to a \textit{smooth} point of $\overline{M}$. This allows to analyse the cut-locus of $\dm$ (restricting to $\{\dm<\text{inj}(N)\}$), following \cite{ManteMennu}, and obtain $n$-rectifiability properties for it. This leads (for the moment) to the existence of a suitable replacement for Fermi coordinates, which becomes the usual one on any compact subset of $\tilde{M}$. Denote by $\iota:\tilde{M} \to N$ the immersion given by the standard projection from the double cover of $M$. We choose $K\subset \tilde{M}$ compact (sufficiently large) and a geodesic ball $B\subset \iota(K)$ (sufficiently small) so that $\iota:\tilde{M}\to N$ admits a deformation as an immersion that decreases area and only moves $K \setminus \iota^{-1}(B)$. (This is analogous to what we did in the lower dimensional case, except that this time we additionally need a deformation that does not move $M$ close to the singular set.) The set $K$ will play the role that was of $\tilde{M}$ in the low-dimensional case. Around $\iota(K)$ we define Allen--Cahn approximations of suitable immersions by fitting one-dimensional Allen--Cahn profiles in the normal bundle. Away from $\iota(K)$, we use the level sets of $\dm$ to complete the definition of the desired Allen--Cahn approximations and create (as in the low-dimensional case) a continuous path $\gamma:[0,{t_0}+1]\to W^{1,2}(N)$ with controlled energy.\footnote{The continuity of the path, the energy bounds and the mean-convexity of $G^{\eps}_0$ (see subsequent lines) ultimately rest on the fact that almost every level set of $\dm$ is almost everywhere smooth, with mean curvature pointing away from $M$. These properties can be obtained on almost every level set by standard arguments. The almost everywhere information is sufficient for our purposes, because in the Allen--Cahn framework hypersurfaces are ``diffused''. (For contrast, in the case of boundaries of Caccioppoli sets, all level sets of $\dm$ must be analysed.)} 
Exploiting further the $n$-rectifiability of the cut-locus, we analyse the singular part of $\Delta \dm$ and (using also the Ricci-positive condition) we obtain that, restricting to $\{\dm< \text{inj}(N)\}$, the distributional Laplacian of $\dm$ is a positive Radon measure.
This translates into a mean convexity property for the Allen--Cahn approximation $G^{\eps}_0$ of $\iota:\tilde{M}\to N$. With a suitable smoothing operation, we obtain from $G^{\eps}_0$ a smooth barrier $m$ that is still mean-convex for the negative Allen--Cahn gradient flow (as for $2\leq n\leq 6$, we add a small forcing term). By employing $m$ we produce the part of the path that connects $\gamma({t_0}+1)$ to the constant $+1$. 

\subsection{Structure of the paper (and remarks for $n\leq 6$)} 
Except for properties of the distance function borrowed from \cite{ManteMennu} (in Section \ref{distance} we point out the relevant modifications needed), the proof is self-contained.

After the preliminary Section \ref{preliminaries}, we begin the proof of Theorem \ref{thm:compare2M_Ricci7}, which we write for $n\geq 7$, assuming the existence of a singular set $\overline{M}\setminus M$ of dimension $\leq n-7$. While the underlying ideas are the same for all dimensions, the proof becomes considerably shorter and more straightforward in the absence of singular set, in particular when $n\leq 6$. In detail, Sections \ref{distance} and \ref{level_sets}, in which we study the distance function to $\overline{M}$ and its level sets, can be omitted when $\overline{M}=M$ and one can use standard facts about tubular neighbourhoods of smooth closed hypersurfaces. In Section \ref{instability} we identify a large unstable region $2|K\setminus B|$ and in Section \ref{immersions} the immersions that will be relevant for the construction of the path. The compact set $K$ that we need to work with in Sections \ref{instability} and \ref{immersions} can be replaced simply by $\tilde{M}$ when $\overline{M}=M$, and in this case the definitions of the Allen--Cahn approximations of the relevant immersions given in Section \ref{all_paths} become simpler. In Section \ref{reach_1} we construct a barrier $m$ by suitably mollifying a Lipschitz function $G^{\eps}_0$, which is defined from the level sets of $\dm$ and is an Allen--Cahn approximation of $\iota:\tilde{M}\to N$. This convolution procedure (described in Appendix \ref{mollifiers}) ensures smoothness and mean-convexity of $m$, which is important for our arguments. If $\overline{M}=M$, $G^{\eps}_0$ is already smooth and mean-convex and no smoothing is needed, so Appendix \ref{mollifiers} and part of Section \ref{reach_1} can be omitted. In Section \ref{final_argument_Ricci7} we complete the proof of Theorem \ref{thm:compare2M_Ricci7}, and subsequently of Theorems \ref{thm:mult1_Ricci7} and \ref{thm:two-sided}.

\medskip

\textbf{Acknowledgments}: This work is partially supported by EPSRC under grant EP/S005641/1. I am thankful to Neshan Wickramasekera for introducing me to the Allen--Cahn functional and its geometric impact. I would like to thank Felix Schulze for helpful conversations about parabolic PDEs and mean curvature flow. 
I am grateful to Otis Chodosh for a mini-course on geometric features of the Allen--Cahn equation, held at Princeton University in June 2019, and for related discussions. The insight that I gained at that time proved very valuable when I addressed the problem discussed in this work. These lectures took place while I was a member of the Institute for Advanced Study, Princeton: I gratefully acknowledge the excellent research environment and the support provided by the Institute and by the National Science Foundation under Grant No. DMS-1638352. 

\section{Preliminaries}
\label{preliminaries}
We give a brief summmary of the construction in \cite{Gua}, then introduce the one-dimensional Allen--Cahn profiles that will be needed for our proof.

\subsection{Reminders: Allen--Cahn minmax approximation scheme}
\label{recall_Gua}

We recall the minmax construction in \cite{Gua}. For $\eps\in(0,1)$ consider the functional 
$$\Ece(u)=\frac{1}{2\sigma}\int_N \eps\frac{|\nabla u|^2}{2} + \frac{W(u)}{\eps}$$
on the Hilbert space $W^{1,2}(N)$. Here $W$ is a $C^2$ double well potential with non-degenerate minima at $\pm 1$, for example $W(x)=\frac{(1-x^2)^2}{4}$, suitably modified (for technical reasons) outside $[-2,2]$--- a typical choice, that we also follow, is to impose quadratic growth to $\infty$ (others are also possible), and $\sigma$ is a normalization constant, $\sigma=\int_{-1}^1 \sqrt{W(t)/2}\,dt$. Consider continuous paths in $W^{1,2}(N)$ that start at the constant $-1$ and end at the constant $+1$: this is the class of admissible paths. A ``wall condition'' is ensured and yields the existence of a minmax solution $u_{\eps}$ to $\Ece'(u_{\eps})=0$. Moreover, upper and lower energy bounds are established, uniformly in $\eps$.

In order to produce a stationary varifold, one considers $w_{\eps} = \Phi(u_{\eps})$ as in \cite{HT} (with $\Phi(s)=\int_0^s \sqrt{W(t)/2}\,dt$) and defines the $n$-varifolds 
$$V^{\eps}(A) = \frac{1}{2\sigma}\int_{-\infty}^\infty V_{\{w_{\eps}=t\}}(A) dt.$$
The analysis in \cite{HT} (which only requires the stationarity of $u_{\eps}$ and no assumption on their second variation), together with the upper and lower bounds for $\Ece(u_{\eps})$, gives that $V^{\eps}$ converges subsequentially, as $\eps \to 0$, to an integral $n$-varifold $V \neq 0$ with vanishing first variation.

Thanks to the fact that the Morse index of $u_{\eps}$ is $\leq 1$ for all $\eps$, \cite{Gua} reduces the problem locally in $N$ to one that concerns stable Allen--Cahn solutions, as in \cite{Ton}. For these, the regularity theory of \cite{Wic} and \cite{TonWic} applies and gives that $\spt{V}$ is smoothly embedded away from a possible singular set of dimension $\leq n-7$, i.e.~$V$ is the varifold of integration over a finite set of closed minimal hypersurfaces, each counted with integer multiplicity: $V=\sum_{j=1}^K q_j |M_j|$, with $q_j\in \N$ and $M_j$ minimal and smooth away from a set of dimension $\leq n-7$ ($|M_j|$ denotes the multiplicity-$1$ varifold of integration on $M_j$). In the case $n\leq 6$ all the $M_j$'s are completely smooth. (In the case $\Rc{N}>0$ there is only one connected component, $K=1$, see Remark \ref{oss:connectedness}.)

We point out that, denoting by $\eps_i$ the sequence extracted to guarantee the varifold convergence, $\ca{E}{\eps_i}(u_{\eps_i})\to \|V\|(N)$ in this construction, in other words the Allen--Cahn energy of $u_{\eps_i}$  converges to the mass $\sum_{j=1}^K q_j {\Hc}^n(M_j)$ of $V$. 

\subsection{$1$-dimensional profiles}
\label{one_dim_profiles}

Let $\Het(r)$ denote the monotonically increasing solution to $u''-W'(u)=0$ such that $\lim_{r\to \pm\infty} \Het(r) = \pm 1$, with $\Het(0)=0$. (For the standard potential $\frac{(1-x^2)^2}{4}$ we have $\Het(r)=\tanh\left(\frac{r}{\sqrt{2}}\right)$.) Then also $\Het(-r)$ and $\Het(\pm r+z)$ solve $u''-W'(u)=0$ (for any $z\in \R$). The rescaled function $\He(r)=\Het\left(\frac{r}{\eps}\right)$ solves ${\eps} u''-\frac{W'(u)}{\eps}=0$.

\medskip

\textit{Truncations}. The arguments developed here will involve the construction of suitable Allen--Cahn approximations of certain immersions. For that purpose, we will make use of approximate versions of $\He$. While this introcuces small errors in the corresponding ODEs, it has the advantage that the approximate solutions are constant ($\pm 1$) away from an interval of the form $[-6 \eps |\log\eps|, 6 \eps |\log\eps|]$. An Allen--Cahn approximation of a hypersurface in $N$ requires to fit the $1$-dimensional profiles in the normal direction to the hypersurface and we need to stay inside a tubular neighbourhood, so it is effective to have one-dimensional profiles that become constant before we reach the boundary of the tubular neighbourhood.

\medskip

The cutoff for the heteroclininc $\Het$ is done as follows (this truncation is also used in \cite{ChoMan} and \cite{WaWe}): for $\Lambda=3|\log\eps|$ define

$$\OHet(r) = \chi(\Lambda^{-1} r -1)\Het(r) \pm (1-\chi(\Lambda^{-1}|r|-1)),$$
where $\pm$ is chosen respectively on $r>0$, $r<0$ and $\chi$ is a smooth bump function that is $+1$ on $(-1,1)$ and has support equal to $[-2,2]$. With this definition, $\OHet=\Het$ on $(-\Lambda, \Lambda)$, $\OHet=-1$ on $(-\infty, -2\Lambda]$, $\OHet=+1$ on $[2\Lambda,\infty)$. Moreover $\OHet$ solves  $\|\OHet''-W'(\OHet)\|_{C^2(\R)} \leq C \eps^3$, for $C>0$ independent of $\eps$. (Note also that $\OHet''-W'(\OHet)=0$ away from $(-2\Lambda, -\Lambda) \cup (\Lambda,2\Lambda)$.) To see this it suffices to compute

$$\OHet''(r)=\Lambda^{-2} \chi''(\Lambda^{-1} r -1)\Het(r) + \Lambda^{-1}\chi'(\Lambda^{-1} r -1)\Het'(r) - \Lambda^{-2} \chi''(\Lambda^{-1} r -1) + \chi(\Lambda^{-1} r -1) \Het''(r)$$
and observe that the first and third term give $\Lambda^{-2} \chi''(\Lambda^{-1} r -1)(\Het(r)-1)$ which is non-zero only on $(-2\Lambda, -\Lambda) \cup (\Lambda,2\Lambda)$. On this set we have $\|\Het(r)-1\|_{C^2}\leq c \eps^\alpha$ for some $\alpha\geq 6$ (because $\Lambda=3|\log\eps|$) and $c>0$ depending only on $W$. This can be done by explicit check for the standard potential (because $1-\tanh(r)=\frac{2e^{-2x}}{1+e^{-2x}}$ and we evaluate for $r>-3\log\eps$) and is true whenever $W$ is quadratic around the minima. Therefore on $(-2\Lambda, -\Lambda) \cup (\Lambda,2\Lambda)$ we get $\|\OHet''\|_{C^2} \leq \tilde{c} \eps^3$ for $\eps<1/2$ and $\tilde{c}>0$ depending only on $W$. Moreover on $(-2\Lambda, -\Lambda) \cup (\Lambda,2\Lambda)$

$$\|W'(\OHet)\|_{C^2}\leq  \|W'(\OHet)\|_{C^2}+2\|W\|_{C^3}\|\OHet'\|_{C^1} \leq c {\eps}^{\alpha}$$
because $W$ is quadratic around $\pm 1$. In conclusion $\|\OHet''-W'(\OHet)\|_{C^2(\R)} \leq C {\eps}^3$ for some $C>0$ (depending on $W$).

\noindent \textit{Notation}: For $\eps<1$ we rescale these truncated solutions and let $\OHet^{\eps}(\cdot)=\OHet\left(\frac{\cdot}{\eps}\right)$.

\medskip

\textit{Computation of the Allen-Cahn energy of $\OHet^{\eps}$}. To compute the energy of ${\Het}^{\eps}$, following \cite{Ilm}, we have, for any $q:\R\to \R$,
$$\int_{(a,b)} \frac{\eps}{2} |q'|^2  + \frac{W(q)}{\eps}= \int_a^b \frac{1}{2} \left(\sqrt{\eps}q'-\frac{1}{\sqrt{\eps}}\sqrt{2W(q)}\right)^2 + q'\sqrt{2W(q)}.$$
The first term vanishes when $q=\He$. Let $G$ denote a primitive of $\sqrt{2W(t)}$. For the second term, noting that the integrand is $G(q)'$, we get $G(q(b))-G(q(a))$. In particular,  $\int_{\R} \frac{\eps}{2} |q'|^2  + \frac{W(q)}{\eps}= G(1)-G(-1) = 2\sigma$. 
Using the fact that $\He(-2\eps\Lambda)=-1+O(\eps^2)$, we get for $q=\He$
$$\int_{-\infty}^{-2\eps\Lambda} \frac{\eps}{2} |q'|^2  + \frac{W(q)}{\eps}  = G(-1+O({\eps}^2)) - G(-1)=O({\eps}^4)>0$$
and similarly $\int_{2\eps\Lambda}^{\infty} \frac{\eps}{2} |q'|^2  + \frac{W(q)}{\eps} =O({\eps}^4)>0$. Therefore
$$\int_{-2\eps\Lambda}^{2\eps\Lambda} \frac{\eps}{2} |q'|^2  + \frac{W(q)}{\eps}   =2 \sigma - O({\eps}^4).$$
Recalling the definition of $\OHet^{\eps}$, we have that $$\OHet^{\eps}-\He = (1-\chi(\eps\Lambda^{-1} t -1))(\OHet^{\eps} \pm 1)$$ which is controlled by $O(\eps^2)$ in $C^2$-norm. Therefore
\be
\label{eq:hetenergy}
2\sigma- O({\eps}^2)\leq \int_{-2\eps\Lambda}^{2\eps\Lambda} \frac{\eps}{2} |({\OHet}^{\eps})'|^2  + \frac{W({\OHet}^{\eps})}{\eps}  \leq 2\sigma + O({\eps}^2).
\ee

\begin{figure}[h]
\centering
\includegraphics[scale=0.4]{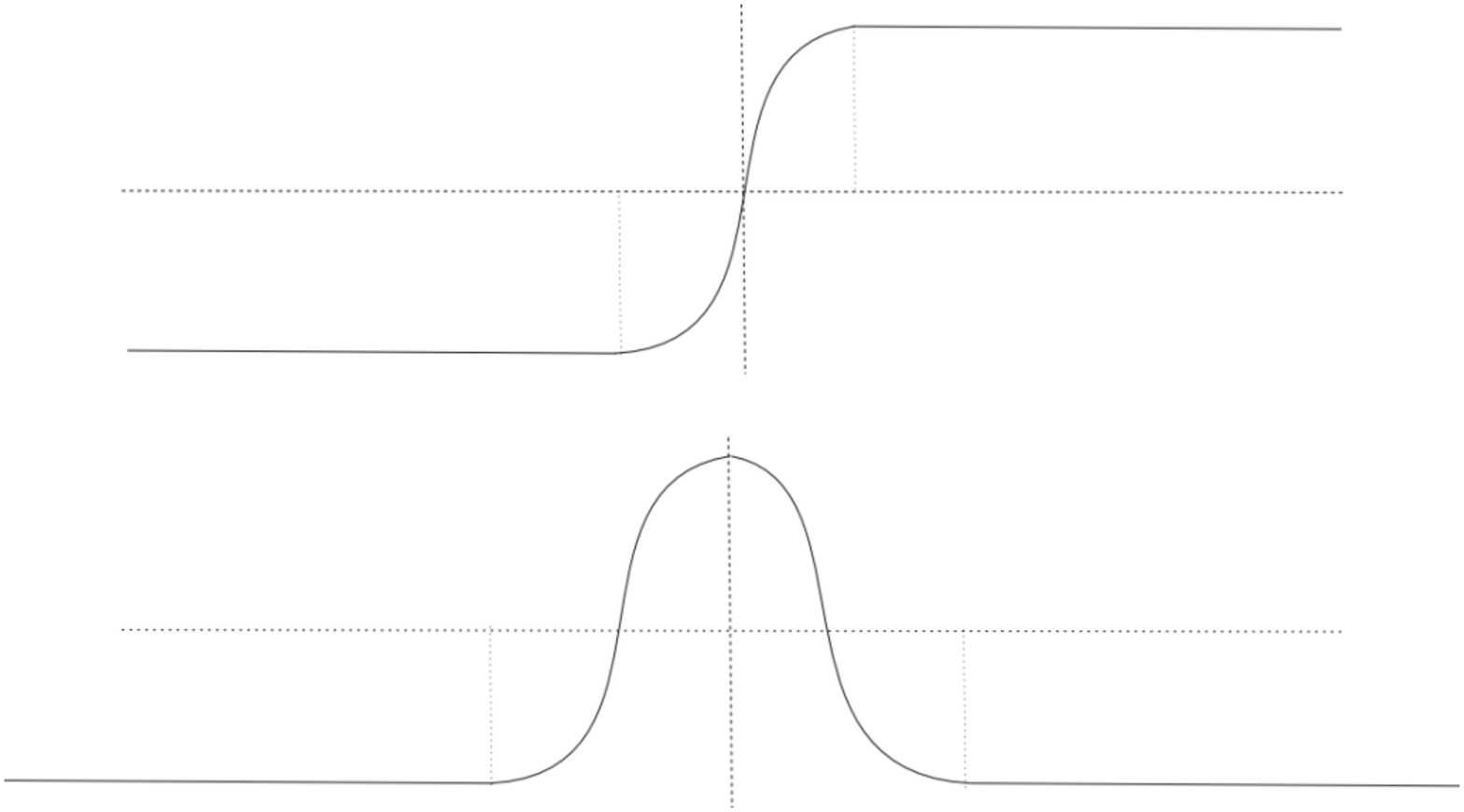}
\put (-189.5, 103){\tiny{$-2\eps\Lambda$}}
\put (-145.5, 103){\tiny{$2\eps\Lambda$}}
\put (-229.5, 103){\tiny{$-4\eps\Lambda$}}
\put (-110.5, 103){\tiny{$4\eps\Lambda$}}
\put (-160.5, 145){\tiny{$+1$}}
\put (-243.5, 69){\tiny{$\equiv -1$}}
\put (-95.5, 69){\tiny{$\equiv -1$}}
\put (-189.5, 203){\tiny{$-2\eps\Lambda$}}
\put (-135.5, 203){\tiny{$2\eps\Lambda$}}
\put (-95.5, 230){\tiny{$\equiv +1$}}
\put (-243.5, 170){\tiny{$\equiv -1$}}
\caption{The smooth functions $\OHet^{\eps}$ (top) and $\Psi=\Psi_0$ (bottom), with $\Lambda=3|\log\eps|$.}
\label{fig:profiles_1d}
\end{figure}

\medskip

\noindent\textit{Families of profiles}.
Define the function $\Psi:\R\to \R$
\be
\Psi(r)=\left\{\begin{array}{ccc}
\OHet^{\eps}(r+2\eps \Lambda) & r\leq 0\\
\OHet^{\eps}(-r+2\eps \Lambda) & r>0
        \end{array}\right. .
\ee
This function is smooth thanks to the fact that all derivatives of $\OHet^{\eps}$ vanish at $\pm 2\eps \Lambda$. Define the following evolution for $t\in [0, \infty)$:
\be
\label{eq:family2}
\Psi_t(r):= \left\{ \begin{array}{ccc}
                      \OHet^{\eps}(r+2\eps \Lambda-t) & r\leq 0 \\
                      \OHet^{\eps}(-r+2\eps \Lambda-t) & r> 0 
                     \end{array}
\right. .
\ee
Note that $\Psi_0=\Psi$ and $\Psi_{t}\equiv -1$ for $t\geq 4\eps\Lambda$. For $t\in(0,4\eps\Lambda)$ the function $\Psi_t$ is equal to $-1$ for $|r|\geq 4\eps\Lambda-t$.
The functions $\Psi_t$ form a family of even, Lipschitz functions and $\Ece(\Psi_t)$ is decreasing in $t$. Indeed, the energy contribution of the ``tails'' ($\pm 1$) is zero (so the energy is finite) and we have $\Ece(\Psi_t)=\Ece(\Psi)-\frac{1}{2\sigma}\int_{-t}^t \eps |\Psi'|^2 + \frac{W(\Psi)}{\eps}$.

The profiles $\Psi_t$, and profiles of the type $\OHet^{\eps}(\cdot -t)$, will be used within our construction to produce Allen--Cahn approximations of relevant immersions (possibly with boundary), having multiplicity $1$ or $2$ on their image.
 
\section{Distance function to $\overline{M}$} 
\label{distance}
Let $N$ be a Riemannian manifold of dimension $n+1$ with $n\geq 2$ and with positive Ricci curvature $\Rc{N}>0$. Let $M\subset N$ be a smooth minimal hypersurface with $\text{dim}\left(\overline{M}\setminus M\right)\leq n-7$, $M$ is stationary in $N$, and $M$ is locally stable in $N$, i.e.~for every point in $\overline{M}$ there exists a geodesic ball centred at the point in which $M$ is stable.
These properties are true for the $\eps\to 0$ varifold limit of finite-index Allen--Cahn solutions on $N$, thanks to the analysis in \cite{Gua}, \cite{HT}, \cite{Ton}, \cite{TonWic}, \cite{Wic}. The stationarity condition implies the existence of tangent cones at every point in $\overline{M}$. A consequence of the deep sheeting theorem in \cite{SS}, \cite{Wic} is that any point of $\overline{M}$ at which one tangent cone is supported on a hyperplane has to be a smooth point.

\medskip

Let $\text{dist}_N$ denote the (unsigned) Riemannian distance on $N$; we will be interested in the function $\dm:N\to [0,\infty)$, $\dm(\cdot)=\text{dist}_N(\cdot, \overline{M})$. Since $N$ is complete, for every $x$ the value $\dm(x)$ is realized by at least one geodesic from $x$ to $\overline{M}$ (Hopf--Rinow). We recall a few facts that are true of the distance to an arbitrary closed set, see \cite[Section 3]{ManteMennu}\footnote{If the closed set is known to be a $C^{1,1}$ submanifold, then the existence of a tubular neighbourhood is guaranteed, in which the nearest point projection is a well-defined map; moreover, if $C^2$ regularity on the submanifold is assumed, Fermi coordinates can be used. In our case, due to the presence of the singular set $\overline{M}\setminus M$, one cannot have a tubular neighbourhood of $\overline{M}$.}. The function $\dm$ is Lipschitz on $N$ (with constant $1$) and locally semi-concave on $N\setminus \overline{M}$, so that its gradient is $BV_{\text{loc}}$ on $N\setminus \overline{M}$ (equivalently, the distributional Hessian of $\dm$ on $N\setminus \overline{M}$ is a Radon measure). We denote by $S_{\dm}$ the subset of $N\setminus \overline{M}$ where $\dm$ fails to be differentiable; $S_{\dm}$ coincides with the set of points in $p\in N\setminus \overline{M}$ for which there exist at least two geodesics from $p$ to $\overline{M}$ whose length realizes $\dm(p)$. The function $\dm$ is $C^1$ on $N\setminus (\overline{M} \cup \overline{S_{\dm}})$ and $S_{\dm}$ is countably $n$-rectifiable (this uses \cite{Alberti}). However, rectifiability is not necessarily true of its closure unless extra hypotheses on the closed set are available; for example, $\overline{S_{\dm}}$ would be countably $n$-rectifiable (even in the $C^{k-2}$ sense) if $\overline{M}$ were a $C^k$ submanifold with $k\geq 3$, thanks to \cite[Section 4]{ManteMennu}. While this statement does not apply immediately in our case due to the presence of the singular set $\overline{M}\setminus M$, the proof in \cite[Section 4]{ManteMennu} can still be carried out without any change by virtue of the following observation, which can also be found in \cite{Gro}, \cite{Zhou1}.

\begin{lem}
 \label{lem:planar_tangent}
Let $x\in N\setminus  \overline{M}$. For any geodesic from $x$ to $\overline{M}$ (whose length realizes $\dm (x)$), we have that the endpoint $y$ on $\overline{M}$ actually belongs to $M$.
\end{lem}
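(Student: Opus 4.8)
The plan is to argue by contradiction using the regularity theory recalled just above the statement, namely that any point of $\overline{M}$ at which some tangent cone is supported on a hyperplane must be a smooth point of $M$. So suppose a length-minimizing geodesic $\sigma$ from $x$ to $\overline{M}$ has endpoint $y\in\overline{M}\setminus M$, a singular point. The key observation is that the geodesic $\sigma$ must meet $\overline{M}$ \emph{orthogonally} at $y$ in a suitable generalized sense: for any nonzero vector $v$ tangent at $y$ to \emph{any} tangent cone $C$ of $\overline{M}$ at $y$, the first variation of the distance-to-$\overline{M}$ along a curve moving in the direction $v$ would decrease $\dm$ unless $\langle \sigma'(y), v\rangle = 0$; since $\dm(x)$ is minimal this forces $\sigma'(y)$ to be orthogonal to the whole tangent cone $C$ (equivalently, $C$ lies in the hyperplane $\{\sigma'(y)\}^\perp$ through the origin).

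The next step is to blow up the whole configuration at $y$. Rescaling $N$ around $y$ by $\lambda^{-1}$ and letting $\lambda\to0$, the metric converges to the Euclidean metric on $\R^{n+1}$, the rescaled geodesics $\sigma$ converge (along a subsequence) to a ray $R$ emanating from the origin, and the rescaled hypersurfaces $\overline{M}$ converge (as varifolds, by the stationarity and the monotonicity formula / compactness of stationary integral varifolds) to a tangent cone $C$ at $y$, which by stationarity of $M$ is a stationary integral cone in $\R^{n+1}$. The point $x$ has $\dm(x)>0$ realized by $\sigma$, so in the blow-up the ray $R$ realizes the distance from its far points to the cone $C$; combined with the orthogonality from the previous paragraph, $C$ is contained in the hyperplane $H=\{R\}^\perp$ through the origin. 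A stationary integral cone contained in a hyperplane through the origin must be (an integer multiple of) that hyperplane itself — this is elementary: its support is a closed cone in $H$ that is stationary, hence by the constancy/regularity for stationary cones in a hyperplane it is all of $H$ — so the tangent cone $C$ to $\overline{M}$ at $y$ is supported on the hyperplane $H$.

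Finally, invoking the consequence of the Schoen--Simon / Wickramasekera sheeting theorem quoted in the excerpt — that any point of $\overline{M}$ admitting a hyperplane tangent cone is a smooth point — we conclude $y\in M$, contradicting $y\in\overline{M}\setminus M$. This contradiction shows the endpoint $y$ must lie in $M$.

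I expect the main obstacle to be making the orthogonality statement precise and stable under blow-up: one has to argue that a minimizing geodesic to the singular, merely-closed set $\overline{M}$ is orthogonal to every tangent cone, and that this orthogonality passes to the varifold limit under rescaling (so that the limit cone really is trapped in the hyperplane $\{R\}^\perp$). One clean way to handle this is to note that for $t$ slightly less than $\dm(x)$, the point $\sigma(t)$ has a unique nearest point $y$ on $\overline{M}$ and $\sigma$ is the unique minimizing geodesic from $\sigma(t)$ to $\overline{M}$, so $\dm$ is differentiable at $\sigma(t)$ with $\nabla\dm(\sigma(t))=\sigma'(t)$; then the semiconcavity of $\dm$ and the fact that $\dm$ is $1$-Lipschitz and vanishes on $\overline{M}$ force, in the rescaled limit, the limit cone $C$ to lie on one side of — and in fact inside — the hyperplane through the origin perpendicular to $R$. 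Once this geometric trapping is established, the identification $C=k\,H$ and the appeal to the sheeting-theorem corollary are routine.
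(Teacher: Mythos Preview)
Your overall strategy---blow up at $y$, trap the tangent cone, then invoke the Schoen--Simon/Wickramasekera consequence---is the same as the paper's, and the endgame is correct. But your middle step overreaches. The first-variation argument you sketch does \emph{not} force the tangent cone into the hyperplane $\{\sigma'(y)\}^\perp$: for a direction $v$ in the support of the tangent cone, points of $\overline{M}$ near $y$ in direction $v$ are at distance approximately $\dm(x)+t\langle\sigma'(y),v\rangle$ from $x$, and minimality of $\dm(x)$ only yields $\langle\sigma'(y),v\rangle\ge 0$. A tangent cone is a one-sided cone (rays from the origin), so this gives merely half-space containment, not hyperplane containment. Your step~3 (``stationary integral cone contained in a hyperplane must be that hyperplane'') is then the wrong lemma; what you actually need is the stronger fact that a stationary integral cone supported in a \emph{closed half-space} is a hyperplane with multiplicity (this is \cite[Ch.~7, Thm.~4.5 and Rmk.~4.6]{Simon}). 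You seem to sense this in your last paragraph (``lie on one side of --- and in fact inside --- the hyperplane''), but the passage from half-space to hyperplane is not elementary constancy; it is the half-space theorem for stationary cones and should be cited as such.

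The paper's proof bypasses all the first-variation and semiconcavity discussion with a single clean geometric observation: pick a point $z$ on the minimizing geodesic close to $y$, and note that the open geodesic ball $B(z)$ of radius $\text{dist}_N(z,y)$ must be disjoint from $\overline{M}$ (otherwise $\gamma$ would not realize $\dm(x)$), while $y\in\partial B(z)$. Blowing up $B(z)$ at $y$ gives an open half-space that the tangent cone avoids, so the cone lies in the closed complementary half-space; then Simon's half-space theorem forces it to be a hyperplane. This is shorter and avoids any analysis of $\dm$ or orthogonality; your route works once you patch the half-space-versus-hyperplane gap, but the ball argument is the cleaner way to produce the half-space barrier.
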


\begin{proof}
Let $\gamma$ be any geodesic from $x$ to $\overline{M}$, let $y$ be its endpoint on $\overline{M}$ and fix a point $z\in N$ that lies in the image of $\gamma$ and such that $\text{dist}_N(z,y)<\text{inj}(N)$. Consider the (open) geodesic ball $B(z)\subset N$ centred at $z$ with radius $\text{dist}_N(z,y)$. Then $\overline{M} \cap B(z)=\emptyset$ (otherwise there would be a shorter curve than $\gamma$ joining $x$ to $\overline{M}$) and $y\in  \overline{M} \cap \p B(z)$. Since the monotonicity formula holds at all points of $\overline{M}$ ($M$ is stationary in $N$), we can blow up at $y$ to obtain tangent cones. Then every tangent cone to $M$ at $y$ has to be supported in a (closed) half space (the complement of the open half space obtained by blowing up $B(z)$ at $y$). By \cite[Ch. 7 Theorem 4.5, Remark 4.6]{Simon} every tangent cone to $M$ at $y$ is the hyperplane tangent to $B(z)$ at $y$, possibly with multiplicity. As pointed out above, the sheeting theorem in \cite{SS}, \cite{Wic} implies that $y$ is a smooth point.
\end{proof}

In other words, any geodesic that realizes the distance to $\overline{M}$ has to end at a smooth point, i.e.~on $M$ (and it meets $M$ orthogonally). This is the key fact that allows to repeat the arguments in \cite[Section 4]{ManteMennu} (as we briefly sketch below) and obtain Proposition \ref{Prop:ManteMennu}. In the rest of this work we will be interested in the set $T_{\om}=\{x\in N: \text{dist}_N(x, \overline{M})<\om\}$, where $\om$ is chosen in $(0,\text{inj}(N))$, therefore we restrict to this open set for our analysis (even though not strictly necessary for this section).

\begin{Prop}[as in \cite{ManteMennu}]
\label{Prop:ManteMennu}
The set $\overline{S_{\dm}}\cap T_\om$ is countably $n$-rectifiable\footnote{Even $C^k$ countably $n$-rectifiable for all $k$, however we will not need this stronger property.}. Moreover, $\nabla \dm \in SBV_{\text{loc}}(T_\om \setminus \overline{M})$ and the singular part (with respect to $\mathcal{H}^{n+1}\res (T_\om \setminus \overline{M})$) of the Radon measure $D^2 \dm \res (T_\om \setminus \overline{M})$ is supported on $\overline{S_{\dm}}\cap (T_\om \setminus \overline{M})$.
\end{Prop}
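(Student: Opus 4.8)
The plan is to follow the strategy of Mantegazza--Mennucci \cite{ManteMennu}, section 4, checking that the only place where $C^k$-regularity of the closed set is used can be replaced by Lemma \ref{lem:planar_tangent}. First I would fix the open set $T_\om$ with $\om<\text{inj}(N)$; on $T_\om\setminus\overline{M}$ every point $x$ has its distance realized by geodesics ending, by Lemma \ref{lem:planar_tangent}, at smooth points $y\in M$ meeting $M$ orthogonally, with length $\dm(x)<\om<\text{inj}(N)$. Thus $x=\exp_y(t\nu(y))$ for some unit normal $\nu$ to $M$ at $y$ and $t=\dm(x)\in(0,\om)$, i.e.~$x$ lies in the image of the normal exponential map restricted to (a piece of) the smooth hypersurface $M$. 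Near any such $y$ the map $(y',t)\mapsto\exp_{y'}(t\nu(y'))$ is the usual smooth Fermi chart for $M$, so the normal-exponential description of $T_\om\setminus\overline{M}$ is available even though there is no global tubular neighbourhood of $\overline{M}$.

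Next I would set up the standard machinery: define, for a smooth point $y\in M$ and unit normal direction, the first focal/cut value $\mu(y,\nu)$ along the normal geodesic, and note $\overline{S_{\dm}}\cap T_\om$ is contained in the set of points $x=\exp_y(t\nu)$ with $t\geq$ (cut value), i.e.~the image under the normal exponential map of the graph of the cut function together with the focal set. The argument in \cite[Section 4]{ManteMennu} shows that the cut function, as a function on (the sphere bundle of) the smooth hypersurface, is locally semiconcave / of bounded variation away from where it vanishes, hence its graph is countably $n$-rectifiable, and that the normal exponential map is locally Lipschitz and maps this $n$-rectifiable graph (and the focal set, which is also $n$-rectifiable by Sard-type arguments on the exhausting compact pieces of $M$) to a countably $n$-rectifiable subset of $T_\om$. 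The key point is that all of this is carried out fibrewise over $M$, which is smooth, so the presence of $\overline{M}\setminus M$ is irrelevant: Lemma \ref{lem:planar_tangent} guarantees that $\overline{S_{\dm}}\cap T_\om$ never ``sees'' the singular set, since no distance-minimizing geodesic can reach it. One must also check that $\overline{S_{\dm}}\cap T_\om$ stays away from $\overline{M}$ (clear, since $\dm>0$ there), so that the rectifiable sets constructed over $M$ exhaust it; an exhaustion of $M$ by compact pieces $K_j$ with $\dm$-neighbourhoods handles the non-compactness of $M$.

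For the second assertion I would use the first one together with the semiconcavity already recorded before the Proposition: on $T_\om\setminus\overline{M}$, $\dm$ is locally semiconcave, so $D^2\dm\res(T_\om\setminus\overline{M})$ is a Radon measure whose absolutely continuous part lies in $L^1_{\text{loc}}$ and whose singular part is a negative measure concentrated on the non-differentiability set, which is exactly $S_{\dm}$, whose closure in $T_\om\setminus\overline{M}$ is $\overline{S_{\dm}}\cap(T_\om\setminus\overline{M})$ and is $\mathcal{H}^n$-finite on compact subsets by the first part. Semiconcavity plus $C^1$-regularity of $\dm$ off $\overline{S_{\dm}}$ (recorded in the text) rules out a Cantor part of $D^2\dm$: the jump part of $\nabla\dm$ is supported on the $n$-rectifiable set $\overline{S_{\dm}}\cap T_\om$, and since $\dm$ is $C^1$ on the complement of that set there is no diffuse singular part; hence $\nabla\dm\in SBV_{\text{loc}}(T_\om\setminus\overline{M})$ with the stated support property. (Here one invokes the structure theory of $BV$/$SBV$ functions together with the fact, from \cite{Alberti} as cited in the text, that the singular part of the Hessian of a semiconcave function is carried by a rectifiable set.)

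The main obstacle is not conceptual but bookkeeping: one must verify carefully that the arguments of \cite[Section 4]{ManteMennu}, written there for a $C^k$ submanifold, go through verbatim once ``the closed set'' is replaced by ``$M$, the smooth part, reached by every minimizing geodesic''. Concretely, the delicate point is to show that the normal exponential map of $M$ is well-behaved (locally Lipschitz, with the right Jacobian estimates up to the focal set) on the relevant region and that the cut locus of $\dm$ truly coincides with the image of the fibrewise cut function over $M$ — this requires knowing that minimizing geodesics do not concentrate near $\overline{M}\setminus M$ and that focal distances of the compact pieces $K_j\subset M$ are bounded below, which follows from the local stability/curvature hypotheses and smoothness on $K_j$. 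Once this identification is in place, rectifiability of $\overline{S_{\dm}}\cap T_\om$ and the $SBV$ statement follow from the cited results exactly as in \cite{ManteMennu}.
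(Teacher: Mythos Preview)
Your proposal is correct and follows essentially the same approach as the paper: both reduce to Mantegazza--Mennucci \cite[Section 4]{ManteMennu} after using Lemma \ref{lem:planar_tangent} to guarantee that every minimizing geodesic ends at a smooth point of $M$, so that the cut-locus analysis is carried out fibrewise over the smooth hypersurface $M$ (equivalently, over its unit normal bundle $\tilde{M}$), and both deduce the $SBV$ statement from rectifiability of $\overline{S_{\dm}}$ together with smoothness of $\dm$ on the complement, which kills the Cantor part. The paper organizes the rectifiability step slightly differently---it first identifies $\overline{S_{\dm}}\cap(T_\om\setminus\overline{M})=\text{Cut}(M)$ exactly (not just containment), then uses that $S_{\dm}$ is already rectifiable by \cite{Alberti} and so it remains only to handle $\text{Cut}(M)\setminus S_{\dm}$, i.e.\ the focal points (option (b))---but this is a minor reorganization of the same argument.
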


\begin{oss}
Additionally we have, since $M$ is smooth, that the absolutely continuous part of $D^2 \dm$ has a smooth density with respect to $\mathcal{H}^{n+1}\res \left(T_{\om}\setminus (\overline{M} \cup \overline{S_{\dm}})\right)$. This density coincides with the pointwise Hessian of $\dm$. 
\end{oss}

The rectifiability of $\overline{S_{\dm}}$ implies that at $\mathcal{H}^n$-a.e.~point $x\in \overline{S_{\dm}}$ there exists a measure-theoretic unit normal $\hat{n}_x$ to $\overline{S_{\dm}}$ (rather, two choices of it).
The rectifiability of $\overline{S_{\dm}}$ and the smoothness of $\dm$ on $T_{\om}\setminus (\overline{M} \cup \overline{S_{\dm}})$ imply that the ``Cantor part'' of the $BV_{\text{loc}}$ function $D^2 \dm$ is $0$ (see the sketch below), leaving only the ``jump part'', which is supported on $\overline{S_{\dm}}$. This means that $\nabla \dm$ is $SBV_{\text{loc}}(T_\om \setminus \overline{M})$; moreover, for $\mathcal{H}^n$-a.e.~point $x\in \overline{S_{\dm}}\cap T_{\om}$, the left and right limits in the Lebesgue sense of $\nabla \dm$ at $x$ are well-defined in the two halfspaces identified by the normal (see \cite[Theorem 3.77]{AFP}). 

\medskip

\textit{Sketch: relevant arguments in \cite{ManteMennu}.} Consider the map $F(y,v,t) = \text{exp}_y(tv)$ for $y\in M$ and $v$ a unit vector orthogonal to $M$ at $y$. For fixed $(y,v)$ the curve $F(y,v,t)$ is a geodesic leaving $M$ orthogonally. We will limit ourselves to $t\leq \om$, since we are only interested in $T_\om$. If $t_0$ is sufficiently small (depending on $(y,v)$) the geodesic $t\in [0,t_0]\to F(y,v,t)$ is the minimizing curve between its endpoint $F(y,v,t_0)$ and $\overline{M}$, equivalently, its length is $\dm\left(F(y,v,t_0)\right)$. However for large enough $t_0$ the geodesic may fail to be minimizing, therefore one can consider $\sigma=\sigma_{y,v}\in(0,\om]$ defined as follows: $\sigma=\om$ if $F(y,v,t)$ is minimizing (between its endpoint and $\overline{M}$) for all $t\leq \om$; otherwise, $\sigma$ is chosen in $(0,\om)$ so that $F(y,v,t)$ is minimizing (between its endpoint and $\overline{M}$) for $t\leq \sigma$ and $F(y,v,t)$ is not minimizing if $t>\sigma$. The set of points 
$$\text{Cut}(M)=\{F(y,v,\sigma_{(y,v)}): y\in M, v\in (T_y M)^\perp, |v|=1, \sigma_{(y,v)}<\om\}$$
is the restriction to $T_\om$ of the so-called cut-locus of $M$, and it is a subset of $T_\om \setminus \overline{M}$ whose closure in $T_\om$ does not intersect $M$. 
Recall that the unit sphere bundle of $M$ is just $\tilde{M}$, the oriented double cover of $M$, so we will also write $(y,v)\in \tilde{M}$.

Standard theory of geodesics (e.g.~\cite[Ch.~2, Lemma 4.8 and Ch.~3, Lemma 2.11]{Sakai}, which give the analogue of \cite[Prop. 4.7]{ManteMennu} for $M$) gives that if $x\in \text{Cut}(M)$ then at least one of the following two conditions holds: (a) there exist (at least) two distinct geodesics from $x$ to $M$ that realize $\dm(x)$; (b) the map $F:\tilde{M}\times (0,\om)\to T_\om$ has non-invertible differential at $(y,v,\sigma_{(y,v)})$, where $x=F(y,v,\sigma_{(y,v)})$. Conversely, if (a) or (b) holds, then the geodesic $t\to F(y,v,t)$ cannot be minimal on $t\in[0,t_0]$ when $t_0\in(\sigma_{(y,v)},\om)$. Option (a) is equivalent to $x\in S_{\dm} \cap (T_\om \setminus \overline{M})$ (see \cite[Prop. 3.7]{ManteMennu}).

Using these facts, the arguments of \cite[Proposition 4.8]{ManteMennu} adapt to give that 
$$\overline{S_{\dm}} \cap (T_\om\setminus \overline{M})=\text{Cut}(M),$$
therefore in order to prove the rectifiability in Proposition \ref{Prop:ManteMennu} it suffices (since $S_{\dm}$ is countably $n$-rectifiable, see above) to show that 
$\text{Cut}(M) \setminus S_{\dm}$ is a countably $n$-rectifiable set in $T_\om\setminus \overline{M}$, i.e.~the analogue of \cite[Theorem 4.11]{ManteMennu}. Note that $\overline{S_{\dm}}\cap \overline{M}\subset \overline{M}\setminus M$ is $\mathcal{H}^n$-negligeable, so it does not affect rectifiability. The points in $\text{Cut}(M) \setminus S_{\dm}$ are characterised by the validity of option (b) above, and the arguments in \cite{ManteMennu} are local around the points $(y,v,\sigma_{y,v})\in \tilde{M} \times (0,\om)$, so they apply verbatim to our case.

Once the countable $n$-rectifiability of $\overline{S_{\dm}}$ has been obtained, it follows that $\nabla \dm$ is $SBV_{\text{loc}}(T_\om \setminus \overline{M})$. Indeed, we know to begin with (see above for these statements about the distance to a closed set) that $\nabla \dm$ is in $BV_{\text{loc}}(T_\om \setminus \overline{M})$ and notice that $\dm$ is $C^2$ (even $C^k$ for all $k$) on $T_\om \setminus (\overline{S_{\dm}} \cup \overline{M})$ thanks to the smoothness of $M$. The ``Cantor part'' of the Radon measure $D^2 \dm$ gives $0$ measure to countably $n$-rectifiable sets (see \cite[Prop. 3.92 or Prop. 4.2]{AFP}), in particular it gives $0$ to $\overline{S_{\dm}}$.
The smoothness of $D^2 \dm$ in $T_\om \setminus (\overline{S_{\dm}} \cup \overline{M})$ then implies that there is no ``Cantor part'', i.e.~$\nabla \dm$ is $SBV_{\text{loc}}(T_\om \setminus \overline{M})$. This concludes the sketch of proof of Proposition \ref{Prop:ManteMennu}.
 
 \medskip
 
\begin{oss}[\textit{on the diffeomorphism $F$}]
\label{oss:retract}
We point out a couple of further facts, mainly adapted from \cite{ManteMennu}, for future reference. The level sets of $\dm$ are smooth in the open set $T_\om \setminus (\overline{S_{\dm}} \cup \overline{M})$, thanks to the implicit function theorem, the smoothness of $\dm$ and the invertibility of $F$ on this open set.

The map $F(y,v,t) = \text{exp}_y(tv)$ for $y\in M$ and $v$ a unit vector orthogonal to $M$ at $y$, $t\in(0,\om)$ is a map from $\tilde{M} \times (0,\om)$ into $T_\om$ (since the oriented double cover $\tilde{M}$ of $M$ is defined as the set of $(y,v)$ with $y\in M$, $v$ unit vector normal to $M$ at $y$). Arguing as in \cite[Prop. 4.8]{ManteMennu} we see that the following restriction of $F$ (still denoted by $F$)
\be
\label{eq:diffeo_F}
F:\{((y,v),s):(y,v)\in \tilde{M}, s\in(0,\sigma_{(y,v)})\} \to T_\om\setminus \text{Cut}(M) \setminus \overline{M} 
\ee
is a (smooth) diffeomorphism.\footnote{This diffeomorphism shows, in particular, the following. If $x\in T_\om \setminus (\overline{S_{\dm}} \cup \overline{M})$, then we know that there exists a unique geodesic $\gamma$ from $x$ to $\overline{M}$ and its endpoint $y$ is on $M$ by Lemma \ref{lem:planar_tangent}. Then, by the properties (a), (b) discussed above, no point of $\gamma$ is in $\text{Cut}(M)$ and therefore all points on $\gamma$ except $y$ have the property that they lie in $T_\om \setminus (\overline{S_{\dm}} \cup \overline{M})$.}
This diffeomorphism extends as a continuous map to $\tilde{M}\times \{0\}$ by sending $((y,v),0)$ to $y\in M$ (note that it is a $2-1$ map here, the standard projection from $\tilde{M}$ to $M$). The image of this continuous map is then $T_\om\setminus \text{Cut}(M)\setminus (\overline{M}\setminus M)$. 
Again following verbatim \cite[Prop. 4.8]{ManteMennu}, we also have that the function $\sigma_{(y,v)}$ is continuous on $\tilde{M}$. The diffeomorpshism $F$ in (\ref{eq:diffeo_F}), continuously extended to $\tilde{M}$, provides the natural replacements for Fermi coordinates around $M$ in our situation, where the singular set $\overline{M}\setminus M$ is present. We will write 
$$V_{\tilde{M}}:=\{((y,v),s):(y,v)\in \tilde{M}, s\in[0,\sigma_{(y,v)})\},$$
for the domain of (the extension of) $F$.

Let us take a closer look at the level sets $\Gamma_t=\{x\in T_\om:\dm(x)=t\}$, for $t\in (0,\om)$. By the previous discussion, the smooth hypersurface $\Gamma_t \setminus \overline{S_{\dm}}$ can be restracted smoothly, staying in $T_\om \setminus \overline{S_{\dm}}$ onto a subset of $M$ and at each time the image of the retraction is contained in (a smooth portion of) a level set of $\dm$. In fact, we have a retraction 
$$\left(T_\om\setminus \text{Cut}(M)\setminus (\overline{M}\setminus M)\right) \times [0,1] \to T_\om\setminus \text{Cut}(M)\setminus (\overline{M}\setminus M)$$
explicitly given, using the identification (\ref{eq:diffeo_F}), by (here $q=(y,v)\in\tilde{M}$ and $\sigma_q=\sigma_{(y,v)}$)
$$\begin{array}{ccc}
  R:\{(q,s):q\in \tilde{M}, s\in[0,\sigma_{q})\} \times [0,1] \to \{(q,s):q\in \tilde{M}, s\in[0,\sigma_{q})\}\\
 R(q,s,\alpha)= q+(1-\alpha)s. 
 \end{array}$$
Under the identification (\ref{eq:diffeo_F}), the function $s$ is just $\dm$, so it follows that the retraction preserves level sets of $\dm$.
\end{oss}

 \medskip

We will now analyse the jump part of the Hessian of $\dm:T_\om\setminus \overline{M}\to (0,\infty)$; this will lead to Lemma \ref{lem:jump_part} below. To this end, we perform, for $\mathcal{H}^n$-a.e.~point $x\in \overline{S_{\dm}}$, a blow up of $\dm$ as follows. Using normal coordinates around $x$, for all sufficiently small $\rho>0$ consider the function $d_\rho:B^{n+1}_1(0)\to (0,\infty)$ defined by 
$$d_\rho(y) = \frac{\dm(x+\rho y) - \dm(x)}{\rho}.$$
Then $(\nabla d_\rho)(y) = (\nabla \dm) (x+\rho y)$. Note that $d_\rho$ have Lipschitz-constant $1$ and $d_\rho(0)=0$, therefore we can extract a sequence $\rho_j\to 0$ such that $d_{\rho_j}$ converge in $C^{0,\alpha}$ (for all $\alpha<1$) to a $1$-Lipschitz function $d_x:B^{n+1}_1(0)\to \R$ with $d_x(0)=0$. Recall that for $\mathcal{H}^n$-a.e.~point $x\in \overline{S_{\dm}}$ we have a normal $\hat{n}_x$ and moreover $\nabla \dm$ admits one-sided limits in $L^1$: there exists two constant vectors $a\neq b$ in $\R^{n+1}$ such that 
$$\frac{1}{\rho^{n+1}}\int_{\{z\in B_{\rho}(x): z\cdot \hat{n}_x <0\}} |\nabla \dm -a| \to 0\,\, \text{ and } \,\,\frac{1}{\rho^{n+1}}\int_{\{z\in B_{\rho}(x): z\cdot \hat{n}_x >0\}} |\nabla \dm -b| \to 0$$ as $\rho\to 0$. This is equivalent, by a change of variables, to 

$$\int_{\{z\in B_{1}(0): z\cdot \hat{n}_x <0\}} |\nabla d_{\rho}-a| \to 0,\,\, \int_{\{z\in B_{1}(0): z\cdot \hat{n}_x >0\}} |\nabla d_{\rho}-b| \to 0$$
as $\rho\to 0$. Therefore $\nabla d_{\rho_j}$ converge in $L^1(B_1^{n+1}(0))$ to the function $F_{ab}$ defined to be constant on each of the two half-balls $\{z\in B_1^{n+1}(0): z\cdot \hat{n}_x <0\}$ and $\{z\in B_1^{n+1}(0): z\cdot \hat{n}_x >0\}$, with respective values $a$ and $b$. This function must be the (distributional) gradient of $d_x$. Indeed, for every $v\in C^1_c(B_1(0))$ we have $\int_{B_1(0)} d_x \nabla v = \lim_{j\to \infty}\int_{B_1(0)} d_{\rho_j} \nabla v = - \lim_{j\to \infty} \int_{B_1(0)} \nabla d_{\rho_j} v = - \int_{B_1(0)} F_{ab} v $, where we used, in the two limits, respectively the uniform convergence $d_{\rho_j} \to d_x$ and the $L^1$-convergence $\nabla d_{\rho_j} \to F_{ab}$. The equality obtained expresses the fact that $F_{ab}=\nabla d_x$ and proves that 
$$d_{\rho_j} \to d_x \text{ in } W^{1,1}(B_1(0)) \text{ and in } C^{0,\alpha}(B_1(0)).$$
Recall now that $\dm$ is locally semiconcave, so it has at least an element in the superdifferential, i.e.~there exists a $C^1$ function $\varpi$ in a neighbourhood of $x$ that is $\geq \dm$ and such that $\varpi(x)=\dm(x)$. Performing the same blow up on $\varpi$, we consider the rescalings $\frac{\varpi(x+\rho y) - \varpi(x)}{\rho}$. These functions converge in $C^1(B_1(0))$ to an affine function $\varpi_x$. By uniform convergence, $\varpi_x \geq d_x$ on $B_1(0)$ and $\varpi_x(0)=d_x(0)=0$. 
Recalling that $\nabla d_x = F_{ab}$, we obtain
\be
\label{eq:left_right}
(a - b)\cdot \hat{n}_x \geq 0.
\ee
The jump part of $D(\nabla \dm)$ is characterized as the measure that is absolutely continuous with respect to $\mathcal{H}^n \res \overline{S_{\dm}}$ and with density that is given for $\mathcal{H}^n$-a.e.~$x\in \overline{S_{\dm}}$ by $(b-a) \otimes \hat{n}_x$ (see e.g.~\cite[(3.90)]{AFP}). Taking the trace and using (\ref{eq:left_right}) this implies:

\begin{lem}
\label{lem:jump_part}
Let $\Delta$ denote the Laplace-Beltrami operator on $T_\om \setminus \overline{M}$. The singular (jump) part of $\Delta \dm$ in $T_{\om}\setminus \overline{M}$ is a negative measure (supported on $\overline{S_{\dm}}$).
\end{lem}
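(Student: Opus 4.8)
The plan is to decompose the distributional Laplacian $\Delta \dm$ on $T_\om \setminus \overline{M}$ into its absolutely continuous, Cantor and jump parts (with respect to $\mathcal{H}^{n+1}$), and to show that the jump part is a nonpositive measure. By Proposition \ref{Prop:ManteMennu} we already know $\nabla \dm \in SBV_{\text{loc}}(T_\om \setminus \overline{M})$, so there is no Cantor part and the singular part of $D(\nabla\dm)$ is concentrated on the countably $n$-rectifiable set $\overline{S_{\dm}}$. Hence $\Delta \dm = \text{tr}\,D(\nabla\dm)$ has a singular (jump) part that is absolutely continuous with respect to $\mathcal{H}^n \res \overline{S_{\dm}}$, and it suffices to compute its density at $\mathcal{H}^n$-a.e.\ point of $\overline{S_{\dm}}$.

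First I would recall the structure theorem for $SBV$ functions (e.g.\ \cite[(3.90)]{AFP}): for the $\R^{n+1}$-valued $SBV$ function $\nabla \dm$, the jump part of its derivative equals $(b-a)\otimes \hat n_x \, \mathcal{H}^n\res \overline{S_{\dm}}$, where $a, b$ are the $L^1$-Lebesgue one-sided traces of $\nabla \dm$ across $\overline{S_{\dm}}$ in the half-spaces $\{z\cdot\hat n_x<0\}$ and $\{z\cdot\hat n_x>0\}$ respectively. Taking the trace, the jump density of $\Delta \dm$ at $x$ is $(b-a)\cdot\hat n_x$. The content of the lemma is therefore exactly the sign inequality $(b-a)\cdot\hat n_x \le 0$ at $\mathcal{H}^n$-a.e.\ $x \in \overline{S_{\dm}}$, which is precisely inequality \eqref{eq:left_right} established in the preceding blow-up discussion.

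The heart of the matter — and the main obstacle — is therefore to justify inequality \eqref{eq:left_right}, i.e.\ $(a-b)\cdot\hat n_x \ge 0$, via the blow-up argument already sketched: at $\mathcal{H}^n$-a.e.\ $x\in\overline{S_{\dm}}$ the rescalings $d_\rho(y) = (\dm(x+\rho y)-\dm(x))/\rho$ converge in $W^{1,1}(B_1)$ and $C^{0,\alpha}(B_1)$ to a $1$-Lipschitz limit $d_x$ with $d_x(0)=0$ and $\nabla d_x = F_{ab}$ (the piecewise-constant function equal to $a$ on $\{z\cdot\hat n_x<0\}$ and $b$ on $\{z\cdot\hat n_x>0\}$). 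Since $\dm$ is locally semiconcave on $N\setminus\overline{M}$ (a property of the distance to any closed set, valid away from $\overline{M}$), there is a $C^1$ superdifferential element $\varpi \ge \dm$ near $x$ touching at $x$; blowing up $\varpi$ too gives an affine $\varpi_x \ge d_x$ with $\varpi_x(0)=d_x(0)=0$. The affine function touching the (piecewise-linear, continuous) function $d_x$ from above at the origin, which lies on the interface $\{z\cdot\hat n_x=0\}$, forces the "jump in the normal derivative" to have the correct sign: writing $d_x(z) = a\cdot z$ for $z\cdot\hat n_x<0$ and $d_x(z)=b\cdot z$ for $z\cdot\hat n_x>0$ (the continuity across the hyperplane makes both well-defined with $a,b$ having equal tangential components, so the jump is purely normal), the constraint $\varpi_x(z)=c\cdot z \ge d_x(z)$ near $0$ gives $c\cdot z \ge a\cdot z$ on one half and $c\cdot z\ge b\cdot z$ on the other; testing with $z = \mp t\hat n_x$ yields $(a-b)\cdot\hat n_x\ge 0$. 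This is the geometric content that the cut locus lies "on the far side" of both incoming distance-minimizing geodesics.

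Finally I would assemble the pieces: the absolutely continuous part of $\Delta\dm$ plays no role in the statement (it is a genuine function, not a sign-definite measure in general — indeed this is where Ricci positivity will enter elsewhere in the paper), there is no Cantor part by Proposition \ref{Prop:ManteMennu}, and the jump part has density $(b-a)\cdot\hat n_x \le 0$ against $\mathcal{H}^n\res\overline{S_{\dm}}$ by \eqref{eq:left_right}. Hence the singular (jump) part of $\Delta\dm$ on $T_\om\setminus\overline{M}$ is a nonpositive Radon measure supported on $\overline{S_{\dm}}$, which is the claim. The only point requiring care is the measurability/almost-everywhere nature of all the objects ($\hat n_x$, the traces $a,b$, the superdifferential selection), but these are standard for rectifiable sets and $SBV$ functions and were already invoked in the build-up; the semiconcavity-plus-touching-plane argument above is the one genuinely new ingredient and it is elementary once the blow-up convergence is in hand.
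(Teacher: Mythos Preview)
Your proposal is correct and follows essentially the same approach as the paper: the blow-up of $\dm$ together with the superdifferential coming from semiconcavity to obtain the sign inequality \eqref{eq:left_right}, then the $SBV$ structure theorem (the same reference \cite[(3.90)]{AFP}) to identify the jump density of $D(\nabla\dm)$ as $(b-a)\otimes\hat n_x$ and conclude by taking the trace. Your write-up is slightly more explicit than the paper's in spelling out how the affine touching function forces $(a-b)\cdot\hat n_x\ge 0$, but the argument is identical in substance.
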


\begin{oss}
Sufficient regularity of $\overline{S_{\dm}}$ would actually imply that this jump part is $0$ close to $\overline{M}$, because of the smallness of $\overline{M}\setminus M$ (and the fact that $\overline{S_{\dm}}\cap M=\emptyset$). E.g.~if $\overline{S_{\dm}}$ were a smooth hypersurface, then locally around a point in $\overline{M}\setminus M$ it would separate $M$ into two stationary hypersurfaces (one on each side of $\overline{S_{\dm}}$) whose closures intersect only at $\overline{M}\setminus M$, against the varifold maximum principle \cite{Ilm2}, \cite{Wic}. Lemma \ref{lem:jump_part} is sufficient for our scopes. 
\end{oss}

Next we analyse the absolutely continuous part (with respect to $\mathcal{H}^{n+1}$) of $\Delta \dm$, for $\dm:T_\om\setminus \overline{M}\to (0,\infty)$. By Proposition \ref{Prop:ManteMennu} it suffices to analyse the smooth function $\Delta \dm$ on $T_\om \setminus (\overline{S_{\dm}} \cup \overline{M})$. For this, we will need the Ricci positive condition (which has not been used so far).

\begin{lem}
\label{lem:negative_Laplacian_1}
The function $\dm$ satisfies $\Delta \dm \leq 0$ on $T_\om\setminus (\overline{S_{\dm}}\cup \overline{M})$.
\end{lem}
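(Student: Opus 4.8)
The plan is to use the classical Riccati (tube-formula) comparison for the distance function to a minimal hypersurface, combined with the positivity of the Ricci curvature. First I would fix a point $x\in T_\om\setminus(\overline{S_{\dm}}\cup\overline M)$; by Lemma 3.1 the unique minimizing geodesic $\gamma$ from $x$ to $\overline M$ ends at a smooth point $y\in M$, meeting $M$ orthogonally, and (by the diffeomorphism $F$ in Remark 3.1, footnote) the whole open segment of $\gamma$ from $y$ to $x$ lies in $T_\om\setminus(\overline{S_{\dm}}\cup\overline M)$, where $\dm$ is smooth. Parametrize this geodesic by arclength $s\mapsto\gamma(s)$ with $\gamma(0)=y$, $\gamma(s)\in M^\perp$, so that $\dm(\gamma(s))=s$ and $\nabla\dm(\gamma(s))=\dot\gamma(s)$ along $\gamma$. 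Set $h(s):=\Delta\dm(\gamma(s))$, which along $\gamma$ is exactly the mean curvature (sum of principal curvatures, i.e.\ $n$ times the normalized mean curvature) of the level set $\Gamma_s=\{\dm=s\}$ at $\gamma(s)$, computed with respect to the normal $\nabla\dm$ pointing away from $M$.

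Next I would derive the Riccati inequality. Writing $A(s)$ for the shape operator of $\Gamma_s$ at $\gamma(s)$ (second fundamental form with respect to $\nabla\dm$), the standard evolution equation along the geodesic gives $A' + A^2 + R_{\dot\gamma} = 0$, where $R_{\dot\gamma}$ is the curvature operator $v\mapsto R(v,\dot\gamma)\dot\gamma$ restricted to $\dot\gamma^\perp$. Taking the trace, and using $\operatorname{tr}(A^2)\ge \frac{1}{n}(\operatorname{tr}A)^2$ by Cauchy--Schwarz together with $\operatorname{tr}R_{\dot\gamma}=\operatorname{Ric}(\dot\gamma,\dot\gamma)$, one obtains
$$
h'(s) \le -\tfrac{1}{n}h(s)^2 - \operatorname{Ric}(\dot\gamma(s),\dot\gamma(s)) \le -\tfrac{1}{n}h(s)^2,
$$
where in the last step I use $\Rc{N}>0$. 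The initial condition is $h(0)=0$: this is because $M$ is minimal, so the level set $\Gamma_0$ "is" $M$ and its mean curvature with respect to any normal vanishes — more precisely, taking the limit $s\to 0^+$ of the smooth function $h$ and using the smoothness of $M$ near $y$ together with $F$ being a diffeomorphism up to $\tilde M\times\{0\}$, one gets $h(0^+)=H_M(y)=0$.

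Finally I would integrate the ODE inequality forward from $s=0$. From $h'\le -\frac1n h^2$ and $h(0)=0$: if $h(s_0)>0$ for some $s_0>0$, then by continuity $h>0$ on a maximal interval $(a,s_0]$ with $h(a)=0$ or $a=0$; on that interval $\frac{d}{ds}(1/h)=-h'/h^2\ge \frac1n$, so $1/h$ is non-decreasing, forcing $1/h(s)\to+\infty$ as $s\downarrow a$, i.e.\ $h(a^+)=0^+$ and $1/h$ bounded below by $\frac1n(s-a)\cdot(\text{something})$ — more directly, comparing with the solution of $y'=-\frac1n y^2$, $y(a)=0$, which is $y\equiv 0$, a standard ODE comparison argument (or simply: $h>0$ and $h$ decreasing on $(a,s_0]$ is contradicted by $h(a^+)=0$) yields $h\le 0$ throughout $(0,\om)$. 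Hence $h(\dm(x))=\Delta\dm(x)\le 0$, which is the claim.

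The main obstacle is not any single hard estimate but rather bookkeeping the identification of $\Delta\dm$ along $\gamma$ with the mean curvature of the evolving level sets, and being careful about sign conventions ($\nabla\dm$ points away from $M$, so the comparison is with a "concave" tube), together with checking the boundary behaviour $h(0^+)=0$ rigorously using the continuous extension of the diffeomorphism $F$ to $\tilde M\times\{0\}$ (Remark 3.1) so that the level sets $\Gamma_s$ genuinely converge smoothly to $M$ near the smooth endpoint $y$. Once these are in place, the Riccati comparison and the ODE integration are entirely routine.
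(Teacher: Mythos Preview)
Your proposal is correct and follows essentially the same approach as the paper: identify $\Delta\dm$ along the normal geodesic with (minus) the mean curvature of the level sets, invoke the Riccati equation together with $\Rc{N}>0$, and use minimality of $M$ for the initial condition. The paper simply cites \cite[Corollary~3.6]{Gray} for the Riccati comparison, whereas you spell it out; note that your final ODE integration is more elaborate than needed, since from $h'\le -\operatorname{Ric}(\dot\gamma,\dot\gamma)<0$ and $h(0^+)=0$ one gets $h(s)<0$ for $s>0$ immediately by integration, without touching the $-\tfrac{1}{n}h^2$ term.
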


\begin{proof}
Recall Remark \ref{oss:retract}.
For $x \in T_\om \setminus (\overline{S_{\dm}}\cup \overline{M})$, $\dm(x)$ is realized by the length of a unique geodesic from $x$ to $\pi(x)\in M$ and the level set $\{y\in N\setminus (\overline{S_{\dm}}\cup \overline{M}):\dm(y)=\dm(x)\}$ passing through $x$ is $C^2$ and its scalar mean curvature at $x$ (with respect to the normal that points away from $M$) is $-\Delta \dm(x)$. We are thus in the classical situation in which we look at level sets of the distance function to a smooth submanifold, in this case a geodesic ball $B_r(\pi(x))$ in $M$. This gives the information on the Laplacian in a neighbourhood of $x$. By Riccati's equation \cite[Corollary 3.6]{Gray} we get that in positive Ricci the mean curvature of the level sets $\{y\in N\setminus (\overline{S_{\dm}}\cup \overline{M}):y=\text{exp}_z(t \nu), z\in B_r(\pi(x))\}$ (this is a disk at distance $t$ from $B_r(\pi(x))$), for either of the choices of unit normal $\nu$ on $B_r(\pi(x))$, increases in $t$, hence the Laplacian $\Delta \dm$ is negative on $N\setminus (\overline{S_{\dm}}\cup \overline{M})$.
\end{proof}

In conclusion, from Lemmas \ref{lem:jump_part} and \ref{lem:negative_Laplacian_1}, we have $\Delta \dm \leq 0$ on $T_\om \setminus \overline{M}$ in the sense of distributions. Since $\dm$ is smooth at points in $M$ and $M$ is minimal, we also get that $\Delta \dm$ is $0$ on $M$ and so we can extend the previous conclusion: $\Delta \dm \leq 0$ on $T_\om \setminus (\overline{M}\setminus M)$. We will now extend across $\overline{M}\setminus M$ by a capacity argument.

\begin{Prop}
\label{Prop:negative_Laplacian_2}
Let $N$ be a closed $(n+1)$-dimensional Riemannian manifold with positive Ricci curvature and $M$ a smooth minimal hypersurface as in Theorem \ref{thm:compare2M_Ricci7}. Denote by $\dm$ the distance function to $\overline{M}$ and by $T_\om=\{x\in N:  \dm(x)<\om\}$, where $\om<\text{inj}(N)$. Then $\Delta \dm \leq 0$ on $T_\om$ in the sense of distributions\footnote{A distribution is said to be $\leq 0$ if for every non-negative test function the result is $\leq 0$. A distribution that is $\geq 0$ or $\leq 0$ is necessarily a Radon measure, see e.g.~\cite[Theorem 1.39]{EvGa}.}.
\end{Prop}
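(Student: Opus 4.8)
The plan is to upgrade the inequality $\Delta \dm \leq 0$, already known on $T_\om \setminus (\overline{M}\setminus M)$ in the distributional sense, to the full neighbourhood $T_\om$ by showing that the singular set $\overline{M}\setminus M$ is removable for this differential inequality. The key point is that $\overline{M}\setminus M$ has Hausdorff dimension $\leq n-7$, hence codimension $\geq 8$ in $N^{n+1}$, and in particular $(n-1)$-dimensional Hausdorff measure zero; sets of this size are negligible for the $W^{1,2}$-type (Newtonian) capacity on an $(n+1)$-manifold, so they cannot support any distributional obstruction to a one-sided Laplacian bound for a Lipschitz function.

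Concretely, I would proceed as follows. First, fix a non-negative test function $\varphi \in C_c^\infty(T_\om)$; the goal is $\int_{T_\om} \dm\, \Delta \varphi \leq 0$. Choose logarithmic cutoff functions $\eta_\delta \in C_c^\infty(T_\om)$ with $0\leq \eta_\delta \leq 1$, $\eta_\delta \equiv 0$ in a neighbourhood of $\overline{M}\setminus M$, $\eta_\delta \equiv 1$ outside a slightly larger neighbourhood, and $\int_{T_\om} |\nabla \eta_\delta|^2 \to 0$ as $\delta \to 0$ — such cutoffs exist precisely because $\overline{M}\setminus M$ has vanishing $2$-capacity (being of dimension $\leq n-7 \leq n-1$, and even $\leq (n+1)-2$, inside the $(n+1)$-manifold); here one covers $\overline{M}\setminus M$, which is compact and $\mathcal{H}^{n-7}$-finite, by controllably many small balls and builds $\eta_\delta$ from standard log-cutoffs on annuli. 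Apply the known inequality $\Delta\dm\le 0$ on $T_\om\setminus(\overline M\setminus M)$ to the admissible test function $\eta_\delta \varphi$ (which is non-negative and compactly supported away from the bad set), obtaining $\int \dm\,\Delta(\eta_\delta\varphi)\le 0$. Expand $\Delta(\eta_\delta\varphi)=\eta_\delta\Delta\varphi + 2\nabla\eta_\delta\cdot\nabla\varphi + \varphi\Delta\eta_\delta$. The first term converges to $\int\dm\,\Delta\varphi$ by dominated convergence (since $\eta_\delta\to 1$ a.e. and $\dm$ is bounded). The second term is bounded by $2\|\dm\|_\infty\|\nabla\varphi\|_\infty\int_{\operatorname{spt}\nabla\eta_\delta}|\nabla\eta_\delta|$, and by Cauchy--Schwarz this is $\le C\big(\int|\nabla\eta_\delta|^2\big)^{1/2}|\operatorname{spt}\nabla\eta_\delta|^{1/2}\to 0$. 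For the third term, since $\dm$ is Lipschitz (so $\nabla\dm\in L^\infty$) one integrates by parts once to rewrite $\int\dm\,\varphi\,\Delta\eta_\delta = -\int \nabla(\dm\,\varphi)\cdot\nabla\eta_\delta$, which is again controlled by $C\big(\int|\nabla\eta_\delta|^2\big)^{1/2}|\operatorname{spt}\nabla\eta_\delta|^{1/2}\to 0$ — this is the step where the Lipschitz regularity of $\dm$ is essential, as it lets us trade a second derivative on $\eta_\delta$ for a first. Passing to the limit $\delta\to 0$ yields $\int_{T_\om}\dm\,\Delta\varphi\le 0$ for all non-negative $\varphi$, which is the claim; the fact that such a distribution is then a (negative) Radon measure is the cited footnote.

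The main obstacle is constructing the cutoff family $\eta_\delta$ with $\int|\nabla\eta_\delta|^2\to 0$ and simultaneously keeping adequate control of $|\operatorname{spt}\nabla\eta_\delta|$ so that the cross terms vanish; this requires a genuine covering argument for $\overline{M}\setminus M$ exploiting $\dim(\overline{M}\setminus M)\le n-7$ (a Vitali-type cover by balls $B_{r_i}(x_i)$ with $\sum r_i^{n-7}$ bounded, then log-cutoffs on the annuli $B_{r_i}\setminus B_{r_i^2}$, whose Dirichlet energy is $O(1/|\log r_i|)$ per ball). One should be slightly careful that the standard capacity estimate needs codimension $\ge 2$; here the codimension is $\ge 8 > 2$, so this is comfortably satisfied, and in fact any bound $\dim(\overline M\setminus M)\le n-1$ would suffice for the capacity argument. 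An alternative, and arguably cleaner, route avoiding explicit cutoffs is to observe that $\Delta\dm$ is already known to be a signed Radon measure on $T_\om\setminus(\overline M\setminus M)$ with no positive part there, that $\dm$ Lipschitz forces $|\Delta\dm|$ to have no atoms and to not charge sets of dimension $< n-1$, hence it assigns zero mass to $\overline{M}\setminus M$; then $\Delta\dm$ extends as a measure across $\overline{M}\setminus M$ with the same (non-positive) restriction, giving the result. I would present the capacity/cutoff version as the primary argument since it is the most self-contained.
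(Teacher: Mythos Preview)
Your proposal is correct and follows essentially the same capacity/cutoff argument as the paper: both remove the singular set $\overline{M}\setminus M$ (which has codimension $\geq 8$) using a family of cutoffs whose gradients become negligible, and both exploit that $\dm$ is Lipschitz so that one integration by parts suffices to control the error terms. The only cosmetic differences are that the paper uses $\int |\nabla\chi|<\delta$ (an $L^1$ condition) rather than your $L^2$ condition $\int|\nabla\eta_\delta|^2\to 0$, and works directly with the distribution $\Delta\dm$ tested against $(1-\chi)v$ and $\chi v$ rather than expanding $\Delta(\eta_\delta\varphi)$; these lead to slightly shorter estimates since no Cauchy--Schwarz step is needed.
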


\begin{proof}
Let $\delta>0$ be arbitrary and choose $\chi \in C^\infty_c(T_\om)$ to be a function that takes values in $[0,1]$, is identically $1$ in an open neighbourhood of $\overline{M}\setminus M$, identically $0$ away from a (larger) neighbourhood of $\overline{M}\setminus M$ and such that $\int_{T_{\om}} |\nabla \chi|<\delta$ (see \cite[4.7]{EvGa}). Then we have, for $v\in C^\infty_c(T_\om)$,

$$\int_{T_{\om}} (\Delta \dm) v = \int_{T_{\om}} \Delta \dm (1-\chi)v + \int_{T_{\om}} \Delta \dm \, \chi v =$$
\be
\label{eq:capacity1}
= \int_{T_{\om}} \Delta \dm (1-\chi)v - \int_{T_{\om}} \nabla \dm \, \nabla \chi\, v - \int_{T_{\om}} \nabla \dm \nabla v \, \chi.
\ee
For the second term recall that the distribution $\nabla \dm$ is an $L^\infty$ function with $|\nabla \dm|=1$ a.e. and so $|\int_{T_{\om}} \nabla \dm \nabla \chi v|\leq \|v\|_{L^\infty} \left( \int_{T_{\om}}  |\nabla \chi|\right) < \delta \|v\|_{L^\infty}$. This tends to $0$ as $\delta \to 0$.
As $\delta \to 0$, the corresponding $\chi$ will go to $0$ in $L^1$ so the third term will also tend to $0$. 

The distribution $\Delta \dm$ is a priori of order $\leq 1$: 
$|\int_{T_{\om}} (\Delta \dm) v| = |\int_{T_{\om}} \nabla \dm \nabla v|\leq \mathcal{H}^{n+1}(N)\|v\|_{C^1}$.
For the first term in the right-most side of (\ref{eq:capacity1}), observe that $(1-\chi)v \in C^\infty_c(T_\om\setminus (\overline{M}-M))$ and $\Delta \dm$ is a negative Radon measure on this open set by Lemma \ref{lem:negative_Laplacian_1}, so that $\int_{T_{\om}} \Delta \dm (1-\chi)v\leq 0$ if $v\geq 0$ (because $(1-\chi)v\geq 0$ by the choice of $\chi$). We argue as follows: (\ref{eq:capacity1}) holds for all $\delta$ and its last two terms tend to $0$ as $\delta \to 0$, therefore for every $v\in C^\infty_c(T_\om)$ and $v\geq 0$ we have

$$\int_{T_{\om}} (\Delta \dm) v = \lim_{\delta\to 0} \int_{T_{\om}} \Delta \dm (1-\chi)v \leq 0.$$
The distribution $\Delta \dm$ is therefore a negative Radon measure on $T_\om$.

\end{proof}

\section{Level sets of $\dm$}
\label{level_sets}

We consider the level sets $\Gamma_t=\{x:\dm(x)=t\}$, for $t\in[0,\om/2]$ (we fixed an arbitrary $\om\in(0,\text{inj}(N))$); we will obtain that the areas of $\Gamma_t$ are ``essentially'' decreasing in $t$. Further, we will consider an ``Allen--Cahn approximation'' $G^{\eps}_0:N\to \R$ of $\Gamma_{6\eps|\log\eps|}=\Gamma_{2\eps\Lambda}$ defined, for $\eps$ sufficiently small (to ensure $4\eps\Lambda<\om/2$), as follows: 

\be
\label{eq:Gt}
G^{\eps}_0(x) = \left\{\begin{array}{ccc}
                          -1 & \text{ for } x\in N\setminus T_\om \\
\OHet^{\eps}(-\dm(x)+2\eps\Lambda)& \text{ for } x\in T_\om
                         \end{array}\right. .
                         \ee
Since $\OHet^{\eps}$ is constantly $-1$ on $(-\infty, -2\eps\Lambda]$,
the function $G^{\eps}_0$ is constantly $-1$ on $\{x:\dm(x)> 4\eps\Lambda\}$. Moreover, since $\OHet^{\eps}$ is smooth, $G_0^{\eps}$ has the same regularity of $\dm$, i.e.~it is locally Lipschitz, $G^{\eps}_0\in W^{1,\infty}(N)$. Moreover, its gradient (which equals $-(\OHet^{\eps})^\prime(-\dm(x)+2\eps\Lambda)\nabla \dm(x)$ in $T_\om$ and $0$ otherwise) is in $BV(N)$ and its distributional Laplacian $\Delta G^{\eps}_0$ is a Radon measure (see Section \ref{distance}).
Note that the profile of $G^{\eps}_0$ in the normal direction at any point of $M$ is given by the function $\Psi=\Psi_0$ in (\ref{eq:family2}), therefore $G^{\eps}_0$ can also be thought of as an Allen--Cahn approximation of $2|M|$, or equivalently of the immersion $\iota:\tilde{M}\to N$ that covers $M$ twice. The fact that $\Ece(G_0^{\eps})$ is approximately $2|M|$ will be etablished later.

The Allen--Cahn first variation of $G^{\eps}_0$ (which is clearly $0$ outside $T_\om$) can be computed in $T_\om$ as follows:
\be
\label{eq:first_var_Gt}
-(2\s)\ca{E'}{\eps}(G^{\eps}_0)=\eps\Delta G^{\eps}_0 -\frac{W'(G^{\eps}_0)}{\eps} = 
\ee
$$=\eps {\OHet^{\eps}}''(-\dm+2\eps\Lambda) |\nabla \dm|^2 - \eps{\OHet^{\eps}}'(-\dm+2\eps\Lambda) \Delta \dm -\frac{W'(\OHet^{\eps}(-\dm+2\eps\Lambda))}{\eps}=$$
$$=\underbrace{\eps {\OHet^{\eps}}''(-\dm+2\eps\Lambda)-\frac{W'(\OHet^{\eps}(-\dm+2\eps\Lambda))}{\eps}}_{O({\eps}^2)}- \underbrace{\eps{\OHet^{\eps}}'(-\dm+2\eps\Lambda)}_{0\leq\, \cdot\,\leq 3} \underbrace{\Delta \dm}_{\leq 0},$$
in the distributional sense. Since $\Delta \dm$ a Radon measure thanks to Proposition \ref{Prop:negative_Laplacian_2}, we will think of $-\ca{E'}{\eps}(G^{\eps}_0)$ as a Radon measure. (The term $O({\eps}^2)$ in the last line is a Lipschitz function that we interpret as a density with respect to $\mathcal{H}^{n+1}$; the last term is the measure $\Delta \dm$ multiplied by a bounded Lipschitz function.)

\medskip

Denote by $\ca{F}{\eps,\mu}$, for a constant $\mu>0$, the functional on $W^{1,2}(N)$ given by 
$$\ca{F}{\eps,\mu}(u)=\Ece(u)-\frac{\mu}{2\s}\int_N u.$$  
The computation in (\ref{eq:first_var_Gt}) shows that for every $\eps$ there exists $\mu_{\eps}>0$, $\mu_{\eps}\to 0$ as $\eps \to 0$, such that\footnote{A precise choice of $\mu_{\eps}$ will be made in (\ref{eq:choice_of_mu}).} (we need $\mu_{\eps}>2\|O({\eps}^2)\|_{L^\infty}$ where $O({\eps}^2)$ is the first term in the last line of (\ref{eq:first_var_Gt}))
$$-\ca{F'}{\eps,\mu_{\eps}}(G^{\eps}_0)=-\ca{E'}{\eps}(G^{\eps}_0) + \frac{\mu_{\eps}}{2\s}\geq \frac{1}{2}\frac{\mu_{\eps}}{2\s}\mathcal{H}^{n+1}.$$
(The inequality means that the Radon measure on the left minus the Radon measure on the right is a non-negative measure.)
The function $G^{\eps}_0$ will form the starting point for the construction of a barrier for the negative $\ca{F}{\eps,\mu_{\eps}}$-gradient flow in Section \ref{reach_1}.

\medskip

\textit{Areas of $\Gamma_t$.} Since $\overline{S_{\dm}}$ is countably $n$-rectifiable (and thus has Hausdorff dimension $\leq n$ and vanishing $\mathcal{H}^{n+1}$ measure) we get that, for a.e. $t>0$, $\mathcal{H}^n(\overline{S_{\dm}}\cap \Gamma_t)=0$. We will denote by $\Om\subset (0,\om)$ the set with $\mathcal{H}^1(\Om)=0$ such that 
$$t\in (0,\om) \setminus \Om \Rightarrow \mathcal{H}^n(\overline{S_{\dm}}\cap \Gamma_t)=0$$
(and therefore, for $t\notin \Om$, $\Gamma_t$ is a smooth hypersurface away from a $\mathcal{H}^n$-negligeable set). Therefore for $t\in (0,\om) \setminus \Om$ we have $\mathcal{H}^n(\Gamma_t)=\mathcal{H}^n(\Gamma_t \setminus \overline{S_{\dm}})$, i.e.~we only need to compute the area of the smooth part of $\Gamma_t$. Thanks to this, we will compare the area of $\Gamma_t$ to that of $M$ for $t\in (0,\om) \setminus \Om$.

\begin{lem}
\label{lem:Gamma_t}
Let $\Gamma_t=\{x\in N:\dm(x)=t\}$ and $\Om\subset (0,\om)$ as above ($\mathcal{H}^1(\Om)=0$). Then

\noindent (a) for $t\in (0,\om) \setminus \Om$ the set $\Gamma_t$ is a smooth hypersurface away from a set of vanishing $\mathcal{H}^n$-measure and $\mathcal{H}^n(\Gamma_t)< 2 \mathcal{H}^n(M)$;

\noindent (b) the function $t\in (0, \om) \to \mathcal{H}^n(\Gamma_t)$ satisfies for $t_1<t_2$, $t_2 \notin \Om$ ($t_1\in \Om$ is allowed), the inequality $\mathcal{H}^n(\Gamma_{t_2})<\mathcal{H}^n(\Gamma_{t_1})$.
\end{lem}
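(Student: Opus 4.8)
The plan is to prove both statements simultaneously using the coarea formula together with the Riccati comparison already established in Section~\ref{distance}, reducing everything to the smooth part of the level sets. First I would fix $t\in(0,\om)\setminus\Om$, so that $\mathcal{H}^n(\Gamma_t)=\mathcal{H}^n(\Gamma_t\setminus\overline{S_{\dm}})$ and it suffices to estimate the area of the smooth portion $\Gamma_t\setminus\overline{S_{\dm}}$. By Remark~\ref{oss:retract}, the diffeomorphism $F$ in (\ref{eq:diffeo_F}) identifies $T_\om\setminus\text{Cut}(M)\setminus\overline{M}$ with $\{((y,v),s):(y,v)\in\tilde{M},\ s\in(0,\sigma_{(y,v)})\}$, and under this identification $s=\dm$; hence $\Gamma_t\setminus\overline{S_{\dm}}$ is, up to an $\mathcal{H}^n$-null set, the image under $F$ of $\{((y,v),t):(y,v)\in\tilde{M},\ t<\sigma_{(y,v)}\}$. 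So I would parametrize by the double cover: write the area of $\Gamma_t\setminus\overline{S_{\dm}}$ as $\int_{\{(y,v)\in\tilde{M}:\ \sigma_{(y,v)}>t\}} J(y,v,t)\,d\mathcal{H}^n(y,v)$, where $J(y,v,t)$ is the Jacobian of $s\mapsto F(y,v,s)$ restricted to the $\tilde{M}$-directions at $s=t$, i.e.\ the volume distortion along the normal geodesic flow.

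The heart of the argument is the monotonicity of $J(y,v,\cdot)$. Along a fixed unit-speed normal geodesic $s\mapsto F(y,v,s)$ emanating orthogonally from $M$ at $y$, the Jacobian $J(y,v,s)$ of the normal exponential map satisfies the standard Jacobi/Riccati ODE; its logarithmic derivative is the trace of the shape operator of the parallel hypersurface (the scalar mean curvature of the level set, with respect to the normal pointing away from $M$), which equals $-\Delta\dm$ along the geodesic. By Lemma~\ref{lem:negative_Laplacian_1} (which uses the Ricci-positive hypothesis and the fact that $M$ is minimal, so the initial mean curvature at $s=0$ is $0$ and Riccati comparison forces the mean curvature to become and stay nonpositive for $s>0$), we have $\partial_s\log J(y,v,s)\le 0$ for $s\in(0,\sigma_{(y,v)})$, with strict inequality for $s>0$ because $\Rc{N}>0$ makes the Riccati inequality strict. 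Therefore $J(y,v,s)$ is strictly decreasing in $s$ on $(0,\sigma_{(y,v)})$, and $J(y,v,0)=1$ (the initial area element on $\tilde{M}$, which is identified with the unit normal bundle of $M$, i.e.\ with $M$ itself under the $2$-to-$1$ projection, giving total initial area $2\mathcal{H}^n(M)$).

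From here part~(a) follows: for $t\in(0,\om)\setminus\Om$,
\[
\mathcal{H}^n(\Gamma_t)=\int_{\{\sigma_{(y,v)}>t\}} J(y,v,t)\,d\mathcal{H}^n(y,v) < \int_{\tilde{M}} J(y,v,0)\,d\mathcal{H}^n(y,v)=\mathcal{H}^n(\tilde{M})=2\mathcal{H}^n(M),
\]
using $J(y,v,t)<J(y,v,0)=1$ pointwise (for those $(y,v)$ with $\sigma_{(y,v)}>t$; note the set $\{\sigma_{(y,v)}=0\}$ is $\mathcal{H}^n$-negligible since $\sigma$ is continuous and positive on $\tilde{M}$, and $\sigma>0$ everywhere because $M$ is smooth and the normal exponential map is a local diffeomorphism near $s=0$) and the fact that shrinking the domain of integration only decreases the integral of the positive integrand $J$. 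For part~(b), given $t_1<t_2$ with $t_2\notin\Om$: the set $\{\sigma_{(y,v)}>t_2\}$ is contained in $\{\sigma_{(y,v)}>t_1\}$ and $J(y,v,t_2)<J(y,v,t_1)$ on the smaller set (by strict monotonicity in $s$), so
\[
\mathcal{H}^n(\Gamma_{t_2})=\int_{\{\sigma>t_2\}}J(\cdot,t_2)<\int_{\{\sigma>t_2\}}J(\cdot,t_1)\le\int_{\{\sigma>t_1\}}J(\cdot,t_1)=\mathcal{H}^n(\Gamma_{t_1}),
\]
where the last equality holds whether or not $t_1\in\Om$, since $\overline{S_{\dm}}\cap\Gamma_{t_1}$ is $\mathcal{H}^n$-null once we know $\overline{S_{\dm}}$ is countably $n$-rectifiable only up to controlling its intersection with a single level—actually here one must be slightly careful: if $t_1\in\Om$ one uses instead the parametrization by $\tilde{M}$ directly, where $\Gamma_{t_1}\cap\overline{S_{\dm}}=F(\{\sigma_{(y,v)}=t_1\})$ is still $\mathcal{H}^n$-null because the map $(y,v)\mapsto\sigma_{(y,v)}$ is continuous and the area contribution of the cut points is captured by the integral over $\{\sigma>t_1\}$ together with a measure-zero boundary.

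The main obstacle I anticipate is the careful bookkeeping of the cut locus and the non-smooth set: one must verify that (i) $\sigma_{(y,v)}>0$ for every $(y,v)\in\tilde{M}$ (clear from smoothness of $M$ and the implicit function theorem near $s=0$), (ii) the portion of $\Gamma_t$ lying in $\text{Cut}(M)$, and in particular in $\overline{S_{\dm}}$, contributes zero area for $t\notin\Om$ (this is exactly the defining property of $\Om$ together with Proposition~\ref{Prop:ManteMennu}), and (iii) the coarea/area-formula computation of $\mathcal{H}^n(\Gamma_t\setminus\overline{S_{\dm}})$ via the diffeomorphism $F$ is legitimate, i.e.\ that $F$ restricted to the slice $\{s=t\}$ is a Lipschitz (indeed smooth) injective map onto $\Gamma_t\setminus\text{Cut}(M)$ with the stated Jacobian—this is guaranteed by (\ref{eq:diffeo_F}). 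Once these points are in place, the Riccati comparison does all the geometric work and the inequalities are immediate; the strictness in both (a) and (b) is the one genuinely using $\Rc{N}>0$ rather than just $\Rc{N}\ge 0$.
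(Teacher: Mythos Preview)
Your approach is essentially the same as the paper's: parametrize the smooth part of $\Gamma_t$ via the diffeomorphism $F$ over $\tilde{M}$, and use the Riccati comparison (as in the proof of Lemma~\ref{lem:negative_Laplacian_1}, citing \cite{Gray}) to show the area element $J(y,v,s)$ (the paper writes $\theta_s$) is strictly decreasing in $s$, then integrate. One small fix in part~(b): for $t_1\in\Om$ you should not claim the equality $\mathcal{H}^n(\Gamma_{t_1})=\int_{\{\sigma>t_1\}}J(\cdot,t_1)$, since the right-hand side is only $\mathcal{H}^n(\Gamma_{t_1}\setminus\overline{S_{\dm}})$; but the trivial inequality $\mathcal{H}^n(\Gamma_{t_1})\geq\mathcal{H}^n(\Gamma_{t_1}\setminus\overline{S_{\dm}})=\int_{\{\sigma>t_1\}}J(\cdot,t_1)$ already completes the chain $\mathcal{H}^n(\Gamma_{t_2})<\int_{\{\sigma>t_1\}}J(\cdot,t_1)\leq\mathcal{H}^n(\Gamma_{t_1})$, so your hedged discussion of the cut-locus contribution at $t_1$ is unnecessary.
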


\begin{proof}
The first part of (a) has already been discussed above. 
Recall the diffeomorphism induced by $F$ in Remark \ref{oss:retract}. Endow $\{(q,s):q\in \tilde{M}, s\in[0,\sigma_{q})\}$ with the pull-back metric (via $F$) from $T_\om\setminus \text{Cut}(M) \setminus \overline{M}$. The metric extend continuously to $\tilde{M}\times \{0\}$ to give the natural metric on $\tilde{M}$. We will thus work in $V_{\tilde{M}}=\{(q,s):q\in \tilde{M}, s\in[0,\sigma_{q})\}$; note that $F^{-1}\left(\Gamma_{t_0}\setminus \overline{S_{\dm}}\right)=\{(x,s)\in V_{\tilde{M}}: s=t_0\}$. Denoting by $\Pi$ the map $\Pi(q,s)=(q,0)$, recall that from the structure of $V_{\tilde{M}}$ we obtain the following. For every $t<t_0$ the set $\{(x,s)\in V_{\tilde{M}}: x\in \Pi\left(F^{-1}\left(\Gamma_{t_0}\setminus \overline{S_{\dm}}\right)\right), s=t\}$ is contained in $F^{-1}(\Gamma_t\setminus \overline{S_{\dm}})$. It is then enough, for (a) and (b), to prove that, if $t_0\notin \Om$ and $t<t_0$, then $\{(x,s)\in V_{\tilde{M}}: s=t_0\}$ has area bounded by $\{(x,s)\in V_{\tilde{M}}: x \in \Pi\left(\{(x,s)\in V_{\tilde{M}}: s=t_0\}\right), s=t\}$.

Let $(x_1, \ldots, x_n, s)$ be local coordinates on $V_{\tilde{M}}$ chosen so that $\frac{\p}{\p x_1}, \ldots ,\frac{\p}{\p x_n}$ form a local frame around a point $x_0\in \tilde{M}$, that is orthonormal at $x_0\in \tilde{M}$, and $\frac{\p}{\p s}$ is the unit speed of the geodesics $\{x=const\}$. Then the Riemannian metric on $V_{\tilde{M}}$ induces an area element $\theta_{s_0}$ for the level set $\{s=s_0\}$ at the point $(x_0,s_0)$. By \cite[Theorem 3.11]{Gray} it satisfies the ODE $\frac{\p}{\p s} \log\theta_s  = -\vec{H}(x_0,s)\cdot \frac{\p}{\p s}$, where $\vec{H}_{(x_0,s)}$ is the mean curvature of the level set at distance $s$ evaluated at the point $(x_0,s)$. (Note that in \cite{Gray} $\theta_s$ denotes the volume element, but since $\frac{\p}{\p s}$ is a unit vector, the area and volume elements are the same.)
By Riccati's equation \cite[Corollary 3.6]{Gray} we find that $H(x_0,s)=\vec{H}_{(x_0,s)}\cdot \frac{\p}{\p s}$ is strictly increasing in $s$, at least at linear rate, thanks to the positiveness of the Ricci curvature, $H_{(x_0,s)}\geq s (\min_N \Rc{N})$. Therefore $\frac{\p}{\p s} \log\theta_s \leq - s (\min_N \Rc{N})$ and we find for $s_0\geq 0$, $t\geq 0$
$$\log\left(\frac{\theta(s_0+t)}{\theta(s_0)}\right) \leq - (\min_N \Rc{N})\int_{s_0}^{s_0+t} s \,ds$$
and therefore
$$\theta_{s_0+t}\leq \theta_{s_0}e^{-\frac{\min_N \Rc{N}}{2}(2s_0 t + t^2)}, \,\,  \text{ for } (x_0,s_0+t)\in V_{\tilde{M}}.$$ 
In particular, $\theta(t)$ is decreasing in $t$. From this (a) and (b) follow by integrating the area element. (Recall that $\int_{\tilde{M}}\theta_0 dx^1 \ldots dx^n = 2\mathcal{H}^n(M)$.) 
\end{proof}

\textit{Allen--Cahn energy of $G_0^{\eps}$.} Thanks to Lemma \ref{lem:Gamma_t} we can control the Allen--Cahn energy of $G_0^{\eps}$ by twice the area of $M$. Indeed, recalling that the energy is $0$ in the complement of $T_{\om/2}$ and that $\nabla G_0^{\eps}$ is parallel to $\nabla \dm$, we use the coarea formula for the slicing function $\dm$ (for which $|\nabla \dm|=1$) and we get

$$\int_{T_{\om/2}} \eps \frac{|\nabla G_0^{\eps}|^2}{2} + \frac{W(G_0^{\eps})}{\eps} = \int_{0}^{\om} \left(\int_{\Gamma_s} \frac{|\nabla G_0^{\eps}|^2}{2} + \frac{W(G_0^{\eps})}{\eps}\right) ds \underbrace{=}_{\text{ by }(\ref{eq:Gt})} $$ 
$$ =\int_{-\om/2+2\eps\Lambda}^{2\eps\Lambda} \left(\int_{\Gamma_{2\eps\Lambda-s}} \eps \frac{({\OHet^{\eps}}'(s))^2}{2} + \frac{W({\OHet^{\eps}}(s))}{\eps}\right) ds \underbrace{\leq}_{\text{Lemma }\ref{lem:Gamma_t}} $$ 
$$\leq   2 \mathcal{H}^n(M) \left(\int_{\R}  \eps \frac{({\OHet^{\eps}}')^2}{2} + \frac{W({\OHet^{\eps}})}{\eps}\right) ,$$
where we used Lemma \ref{lem:Gamma_t} (a)  for a.e.~$s$,
namely $s\notin \Omega$.
By the estimates in (\ref{eq:hetenergy}) we get
\be
\label{eq:G_t_energy}
\Ece(G_0^{\eps})\leq 2 \mathcal{H}^n(M) \, (1+O({\eps}^2)).
\ee

\section{Instability properties of $M$ (choice of $B$)}
\label{instability}

Let $\iota:\tilde{M}\to N$ be the standard projection ($2$--$1$ map) from the oriented double cover of $M$ onto $M$. This is a smooth minimal immersion. Let $\nu$ be a choice (on $\tilde{M}$) of unit normal to the immersion $\iota$. Recall (Remark \ref{oss:retract}) the coordinates $((y,v),s)=(q,s)$ on $V_{\tilde{M}}$, which is diffeomorphic to $T_\om \setminus \overline{S_{\dm}}\setminus (\overline{M}\setminus M)$; here $y\in M$ and $v$ a unit vector orthogonal to $M$ at $y$, or, equivalently, $q=(y,v)\in \tilde{M}$. For every compact set $K\subset \tilde{M}$ there exists $c_K>0$ such that $c_K< \sigma_{(y,v)}$ for all $(y,v)\in K$. This follows from the continuity of $\sigma_q$ on $\tilde{M}$ (Remark \ref{oss:retract}). Choosing $K$ even (i.e.~such that $K$ is the double cover of a compact set $\iota(K)$ in $M$) this means that $\iota(K)$ admits a two-sided tubular neighbourhood of semi-width $c_K$.

\medskip

We will now consider deformations of $\iota$ with initial velocity dictated by a function $\varphi \in C^2_c(\tilde{M})$. For $\varphi \in C_c^2(\tilde{M})$, choose $c_{{\text{supp}\varphi}}$ as above and consider the following one-parameter family of immersions $\iota_t:\tilde{M}\to N$ defined for $t\in(-\delta_0,\delta_0)$, where $\delta_0\in\left(0,\frac{c_{{\text{supp}\varphi}}}{\text{max}\varphi}\right)$:
$$(y,v)\to \text{exp}_{\iota(y)}(t \varphi((y,v)) \nu((y,v))),$$
for $(y,v)\in\tilde{M}$. The first variation of area at $t=0$ is $0$ because $M$ is minimal. The second variation of area at $t=0$ is given by 
\be
\label{eq:second_var}
\int_{\tilde{M}}|\nabla \varphi|^2  d{\Hc}^n-\int_{\tilde{M}}\varphi^2 (|A|^2 +\Rc{N}(\nu,\nu)) d{\Hc}^n,
\ee
where $A$ denotes the second fundamental form of $\iota$, $\nabla$ the gradient on $\tilde{M}$ (with respect to $\mathscr{g}_0$, the Riemannian metric induced by the pull-back from $M$), $\Rc{N}$ the Ricci tensor of $N$ and $\mathcal{H}^n$ is induced on $\tilde{M}$ by $\mathscr{g}_0$ (equivalently, integrate with repsect to $d\text{vol}_{\mathscr{g}_0}$).

\medskip

\begin{lem}[unstable region]
\label{lem:unstableregion_Ricci7}
There exist a geodesic ball $D\subset \subset M$ and $\tilde{\phi} \in C^2_c(\tilde{M})$ with $\tilde{\phi}\geq 0$, such that, writing $\tilde{D}=\iota^{-1}(D)$, the support of $\tilde{\phi}$ is contained in $M\setminus \overline{\tilde{D}}$ and
\begin{equation}
 \label{eq:unstableregion_Ricci7}
 \int_{\tilde{M}}|\nabla \tilde{\phi}|^2 d{\Hc}^n-\int_{\tilde{M}}\tilde{\phi}^2 (|A|^2 +\Rc{N}(\nu,\nu)) d{\Hc}^n < 0.
\end{equation}
\end{lem}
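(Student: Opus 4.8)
The plan is to start from the global instability of $\iota:\tilde M\to N$, which is a direct consequence of the positive Ricci curvature hypothesis: for a minimal two-sided immersion the Jacobi operator has potential $|A|^2+\Rc{N}(\nu,\nu)\geq \min_N\Rc{N}>0$, so testing the second variation \eqref{eq:second_var} with $\varphi\equiv 1$ on $\tilde M$ (or, if $\tilde M$ is noncompact because $\overline M\neq M$, with a suitable cutoff exhausting $\tilde M$; this is the only place one must be slightly careful, but note that $M$ has finite area by Lemma \ref{lem:Gamma_t}, so logarithmic cutoffs give a negative value in the limit) shows that there is some $\varphi_0\in C^2_c(\tilde M)$, $\varphi_0\geq 0$, with
$$\int_{\tilde M}|\nabla\varphi_0|^2\,d\Hc^n-\int_{\tilde M}\varphi_0^2(|A|^2+\Rc{N}(\nu,\nu))\,d\Hc^n<0.$$
Thus the quadratic form $Q(\varphi)=\int_{\tilde M}|\nabla\varphi|^2-\varphi^2(|A|^2+\Rc{N}(\nu,\nu))$ is negative on at least one nonnegative compactly supported function.

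The substance of the lemma is that we may additionally arrange the test function to vanish on (the preimage of) a small geodesic ball $D\subset\subset M$. This is the ``capacity argument'' referred to in the introduction. Fix $\varphi_0$ as above with $Q(\varphi_0)=-2\eta<0$, and fix any point $p\in M$ (a smooth point) together with a small geodesic ball $D_r=B_r(p)\cap M$, $\tilde D_r=\iota^{-1}(D_r)$. Choose a logarithmic cutoff $\chi_r\in C^\infty_c(M)$ (pulled back to $\tilde M$) with $\chi_r\equiv 0$ on $\tilde D_{r}$, $\chi_r\equiv 1$ outside $\tilde D_{\sqrt r}$, $0\leq\chi_r\leq 1$, and $\int_{\tilde M}|\nabla\chi_r|^2\,d\Hc^n\to 0$ as $r\to 0$; such cutoffs exist precisely because $\{p\}$ has zero $W^{1,2}$-capacity in the $n$-dimensional manifold $M$ (here $n\geq 2$ is used, so points have vanishing capacity — see e.g.\ \cite[4.7]{EvGa}). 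Set $\tilde\phi_r=\chi_r\,\varphi_0\in C^2_c(\tilde M)$, which is nonnegative and supported in $M\setminus\overline{\tilde D_r}$. I would then estimate $Q(\tilde\phi_r)$: writing $\nabla(\chi_r\varphi_0)=\chi_r\nabla\varphi_0+\varphi_0\nabla\chi_r$, one gets
$$Q(\tilde\phi_r)=\int_{\tilde M}\chi_r^2|\nabla\varphi_0|^2+\varphi_0^2|\nabla\chi_r|^2+2\chi_r\varphi_0\,\nabla\varphi_0\!\cdot\!\nabla\chi_r-\chi_r^2\varphi_0^2(|A|^2+\Rc{N}(\nu,\nu)).$$
Since $0\leq\chi_r\leq 1$ and $|A|^2+\Rc{N}(\nu,\nu)\geq 0$, the first and last terms are dominated by $Q(\varphi_0)$ plus the error $\int(1-\chi_r^2)\varphi_0^2(|A|^2+\Rc{N}(\nu,\nu))$, which tends to $0$ as $r\to 0$ by dominated convergence (the integrand is bounded on the fixed compact set $\mathrm{supp}\,\varphi_0$ and $\chi_r\to 1$ pointwise there). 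The middle cross term is bounded, via Cauchy--Schwarz, by $2\|\varphi_0\|_{C^1}\,\big(\mathcal H^n(\mathrm{supp}\,\varphi_0)\big)^{1/2}\big(\int|\nabla\chi_r|^2\big)^{1/2}\to 0$, and the $\varphi_0^2|\nabla\chi_r|^2$ term is bounded by $\|\varphi_0\|_{C^0}^2\int|\nabla\chi_r|^2\to 0$. Hence $Q(\tilde\phi_r)\to Q(\varphi_0)=-2\eta$, so for $r$ small enough $Q(\tilde\phi_r)<-\eta<0$. Taking $D=D_r$ and $\tilde\phi=\tilde\phi_r$ for such an $r$ gives \eqref{eq:unstableregion_Ricci7}.

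The main obstacle, and the only genuinely nontrivial point, is the verification that one may start from a global negative direction $\varphi_0$ \emph{with compact support} when $\overline M\neq M$: one must produce a compactly supported nonnegative Jacobi-unstable function on the (possibly noncompact, but finite-area) double cover $\tilde M$. The clean way is to first note the unstable inequality holds in the limit for $\varphi\equiv 1$ using finiteness of $\Hc^n(M)$ together with $\min_N\Rc{N}>0$, then truncate near the singular set $\overline M\setminus M$ by a capacity/cutoff argument (the singular set has dimension $\leq n-7<n-1$, so it too has vanishing $W^{1,2}$-capacity in $M$), exactly as in the computation above but with the cutoff localized around $\overline M\setminus M$ instead of around the point $p$; the error terms are controlled identically. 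Everything else is a routine cutoff estimate, and the two cutoffs (near $\overline M\setminus M$ to gain compact support, and near $p$ to free up the ball $D$) can be performed simultaneously since their supports are disjoint for $r$ small.
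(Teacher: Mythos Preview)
Your proof is correct and follows essentially the same capacity argument as the paper: the paper combines the two cutoffs into a single ambient function $\rho\in C^\infty_c(N)$ equal to $1$ near $\{b\}\cup(\overline{M}\setminus M)$ with $\int_N|\nabla\rho|^2<\delta$ and sets $\tilde\phi=1-\rho\circ\iota$ directly, whereas you perform the truncation near the singular set and near the point $p$ in two separate steps before noting they can be merged. The only point worth tightening is your remark that ``the error terms are controlled identically'' for the cutoff near $\overline{M}\setminus M$: there you are effectively starting from $\varphi\equiv 1$ rather than a compactly supported $\varphi_0$, so the bound should be written as $Q(\psi_k)\leq\int|\nabla\psi_k|^2-\int\psi_k^2\,\Rc{N}(\nu,\nu)\to-\int_{\tilde M}\Rc{N}(\nu,\nu)<0$ (dropping the nonnegative $|A|^2$ term), which avoids needing to know $\int_M|A|^2<\infty$ a priori.
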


\begin{proof}
The second variation of $M$ is only defined for initial velocities induced by a function with compact support in $M$.
Fix an arbitrary point $b\in M$. Let $\delta>0$ be arbitrary and choose $\rho=\rho_{\delta}\in C^\infty_c(N)$ such that $0\leq \rho \leq 1$, $\rho=1$ in an open neighbourhood of $\{b\}\cup (\overline{M}\setminus M)$, $\rho=0$ in the complement of a (larger) open neighbourhood of $\{b\}\cup (\overline{M}\setminus M)$, and $\int_{N} |\nabla \rho|^2 <\delta$. This is possible because $\{b\}\cup (\overline{M}\setminus M)$ has finite (actually $0$) $\mathcal{H}^{n-2}$-measure (so its $2$-capacity is $0$, see \cite[Section 4.7]{EvGa}). 

Then the function $\varphi(q)=1-\rho(\iota(q))$ is admissible in (\ref{eq:second_var}) and the expression becomes (integrating on $M$)

$$2 \int_{M}|\nabla \rho|^2  d{\Hc}^n-2\int_{M}(1-\rho)^2 (|A_M|^2 +\Rc{N}(\nu,\nu)) d{\Hc}^n.$$
(Note that on $M$ the choice of $\nu$ is in general only permitted up to sign; this suffices for the term $\Rc{N}(\nu,\nu)$ to make sense.) Sending $\delta\to 0$ the second term tends to $-2\int_{M}(|A_M|^2 +\Rc{N}(\nu,\nu)) d{\Hc}^n$ and the first term tends to $0$ (recall that $\int_M |A_M|^2 <\infty$ thanks to the stability assumption) so the above expression converges to a negative number (because $\Rc{N}>0$). Therefore there exists $\delta$ sufficiently small such that  

$$2 \int_{M}|\nabla \rho|^2  d{\Hc}^n-2\int_{M}(1-\rho)^2 (|A_M|^2 +\Rc{N}(\nu,\nu)) d{\Hc}^n<0.$$
We let, for this $\delta$, $\tilde{\phi}(q)=1-\rho(\iota(q))$. Since $1-\rho$ vanishes in a neighbourhood of $b$, there exists a geodesic ball $D$ whose closure is disjoint from $\text{supp}(1-\rho)$ and therefore its double cover $\tilde{D}$ is a positive distance apart from $\text{supp}\tilde{\phi}$.
\end{proof}

\begin{oss}
This lemma uses $n\geq 2$ to argue that $\{b\}$ has codimension $\geq 2$ (for $n=1$ the lemma fails, e.g.~for $\RP^1 \subset \RP^2$). 
\end{oss}

\begin{oss}
\label{oss:two_comp_rho}
By the construction of $\rho$ in \cite{EvGa}, $\rho(x)=0$ when $\text{dist}_N(x,\{b\}\cup (\overline{M}\setminus M))>d_{\delta}$ for some $d_{\delta}\to 0$ as $\delta \to 0$. This means that for $\delta$ sufficiently small the support of $\rho$ has at least two (compact) connected components one of which contains $b$ (and thus $\overline{\tilde{D}}$)  while the union of the others contains an open neighbhourhood $O_1$ of $\overline{M}\setminus M$. Let $O\subset \subset O_1$ be an open set containing $\overline{M}\setminus M$. For $\tilde{\phi}=1-\rho\circ \iota$, we have that the complement of $\text{supp}\tilde{\phi}$ has at least two (open) connected components in $\tilde{M}$, one containing $\overline{\tilde{D}}$ while the other contains $\overline{\iota^{-1}(O)}$. Note that $K=\tilde{M}\setminus \iota^{-1}(O)$ is compact. These facts guarantee that $\tilde{\phi}$ vanishes in a neighbourhood of $\p \tilde{D}$ and of $\p (\iota^{-1}(O))=\p K$, a condition that will be technically useful in Section \ref{immersions}.
\end{oss}

\begin{oss}[choice of $B$]
\label{oss:choiceB_Ricci7}
Choose the ball $B$ in $M$ to be concentric with $D$ and with half the radius. Denote by $R>0$ the radius of $B$. Let $\tilde{B}=\iota^{-1}(B)$: this is the union of two geodesic balls in $\tilde{M}$. 
The choices of $B$ and $\tilde{\phi}$ will be kept until the end.
\end{oss}

\begin{oss}
\label{oss:geom_counterpart_Ricci7}
The geometric counterpart of Lemma \ref{lem:unstableregion_Ricci7} is that the minimal immersion $\iota$ is unstable with respect to the area functional also if we restric to deformations that leave $\tilde{D}$ (and $D$) fixed and that do not move $M$ close to its singular set $\overline{M}\setminus M$. We will be more specific in Section \ref{immersions} below.
\end{oss}

\section{Relevant immersions (choice of $\tau$)}
\label{immersions}

Recall Remark \ref{oss:two_comp_rho}. We will fix the compact subset $K=\tilde{M}\setminus \iota^{-1}(O)$ and will denote by $K_B$ the compact set $K\setminus \tilde{B}$, where $\tilde{B}$ is as in Remark \ref{oss:choiceB_Ricci7}. Note that both $K$ and $K_B$ are even in $\tilde{M}$, i.e.~they are double covers (via $\iota$) of compact subsets of $M$. We have $\text{supp}\tilde{\phi}\subset K_B\subset K$, for $\tilde{\phi}$ chosen in Lemma \ref{lem:unstableregion_Ricci7}. Recall that $\tilde{\phi}$ vanishes in a neighbourhood of $\p K_B$ (and of $\p K$). We will define on $K$ and $K_B$ suitable two-sided immersions into $N$, smooth up to the boundaries $\p K$ and $\p K_B$ (this means that there exist open neighbourhoods of $K$ and $K_B$ to which the immersions can be smoothly extended).

Choose $c_K>0$ such that $c_K<\min_{(y,v)\in K}\sigma_{(y,v)}$ (by the continuity of $\sigma>0$ on $\tilde{M}$ the minimum exists and is positive). We therefore have a well-defined one-sided tubular neighbourhood of $K$ in $V_{\tilde{M}}$, namely $K\times [0,c_K)$, with closure contained in $V_{\tilde{M}}$. Note that there exists an open neighbhourhood of $K$ on which $\sigma_{(y,v)}>c_K$, by continuity of $\sigma$ on $\tilde{M}$. 

Recall that $V_{\tilde{M}}$ is endowed with the Riemannian metric induced by the pull-back from $N$. Let $\Pi_K$ denote the nearest point projection onto $K$ (in coordinates, $\Pi_K(q,s)=(q,0)$). For future purposes, we ensure that $c_K$ above is also suitably small to ensure that, for $x=(q,s)\in K\times [0,c_K)$, then 
\be
\label{eq:JacPi}
\left|\,\,|J\Pi_K|(x)-1\,\,\right|\leq 2 C_{K} s \,\,\text{ and }\,\,\left|\,\,\frac{1}{|J\Pi_K|(x)}-1\,\,\right|\leq 2C_K s,
\ee
where $|J\Pi_K| = \sqrt{(D \Pi_K) (D\Pi_K)^T}$ and the constant $C_K>0$ is the maximum of the norm of the second fundamental form of $\iota:\tilde{M}\to N$ restricted to $K\subset \tilde{M}$. Note that $s$ is just the Riemannian distance of $(q,s)$ to $K$ (and to $\overline{M}$). 

\medskip

Choosing $\tilde{c}_0>0$ and $\tilde{t}_0>0$ sufficiently small, we can ensure that $(q,c+t\tilde{\phi}(q)) \in K\times [0,\frac{c_K}{2})$ for all $t\in [0,\tilde{t}_0]$ and for all $c\in [0,\tilde{c}_0]$. For any such $c,t$ we thus have a smooth  two-sided immersion $q=(y,v)\in \text{Int}(K)\to \text{exp}_{y}\left((c+t \tilde{\phi}(q)) v\right)$ from the interior of $K$ into $N$.

\begin{oss}
 \label{oss:extend_boundary}
 Note that, since $\tilde{\phi}=0$ in a neighbourhood of $\p K$,
the immersion $q=(y,v)\in \text{Int}(K)\to \text{exp}_{y}\left((c+t \tilde{\phi}(q)) v\right)$ agrees with $q=(y,v)\in \text{Int}(K)\to \text{exp}_{y}(c v)$ in a neighbourhood of $\p K$, therefore it extends smoothly to $\p K$. Similarly, $q=(y,v)\in \text{Int}(K_B)\to \text{exp}_{y}\left((c+t \tilde{\phi}(q)) v\right)$ extend smoothly to $\p K_B$ because $\tilde{\phi}=0$ vanishes in a neighbourhood of $\p K$.
\end{oss}

\begin{oss}
 \label{oss:Pi_ct}
\textit{(a)} Again thanks to the fact that $\tilde{\phi}=0$ in a neighbourhood of $\p K$, we have the following technically useful fact. For the two-sided immersion $q=(y,v)\in K\to \text{exp}_{y}\left((c+t \tilde{\phi}(q)) v\right)$, with $c>0$, denote by $\nu$ a choice of unit normal (which extends continuously up to $\p K$) and by $K_{c,t,\tilde{\phi}}$ its image. We can find $\underline{c}>0$ such that, for any $t\in [0,\tilde{t}_0]$ and $c\in [0,\tilde{c}_0]$, the set $\{\text{exp}_{x}(s\nu):s\in (-\underline{c}, \underline{c}), x \in K_{c,t,\tilde{\phi}}\}$ is contained in $K\times [0,c_K)$. By making $\underline{c}$ smaller if necessary, we can also ensure that the set 
$$\{\text{exp}_{x}(s\nu):s\in (-\min\{c, \underline{c}\}, \min\{c, \underline{c}\}), x \in K_{c,t,\tilde{\phi}}\}$$
is a tubular neighbourhood of $K_{c,t,\tilde{\phi}}$, in the sense that it admits a well-defined nearest point projection $\Pi_{c,t}$ onto $K_{c,t,\tilde{\phi}}$. This projection extends smoothly up to the boundary portion $\{\text{exp}_{x}(s\nu):s\in (-\min\{c, \underline{c}\}, \min\{c, \underline{c}\}), x \in \p K_{c,t,\tilde{\phi}}\}$. In fact, close to $\{\text{exp}_{x}(s\nu):s\in (-\min\{c, \underline{c}\}, \min\{c, \underline{c}\}), x \in \p K_{c,t,\tilde{\phi}}\}$ we have that $\Pi_{c,t}$ agrees with the nearest point projection onto $\Gamma_c$.

These properties essentially say that we can work with tubular neighbourhoods of $K_{c,t,\tilde{\phi}}$ without interfering with the complement of $F(K\times [0,c_K))$ and it will be useful when writing Allen--Cahn approximations of the immersions $q=(y,v)\in \text{Int}(K)\to \text{exp}_{y}\left((c+t \tilde{\phi}(q)) v\right)$.
\footnote{More precisely, we can patch the definition of Allen--Cahn approximation given in the tubular neighbourhood of $K_{c,t,\tilde{\phi}}$ (for $c=2\eps\Lambda$ to be chosen) with the function $G^{\eps}_0$ (that is an Allen--Cahn approximation of $\Gamma_{2\eps\Lambda}$).}

\textit{(b)} For notational convenience we redefine $\tilde{c_0}$, by choosing the minimum of $\tilde{c}_0$ specified above and $\underline{c}$ specified in (a). Then we have a well-defined nearest point projection 
$$\Pi_{c,t}: \{\text{exp}_{x}(s\nu):s\in (-c, c), x \in K_{c,t,\tilde{\phi}}\} \to K_{c,t,\tilde{\phi}}$$
for all $c\in (0,\tilde{c_0}]$ and all $t\in [0,\tilde{t_0}]$.
There exists a constant $C_{K,c_0,t_0}>0$, depending only on $c_0$, $t_0$, on the Riemannian metric and on the $C^3$ norms of $\tilde{\phi}$ on $K$ and of $F$, such that

\be
\label{eq:JacPi2}
\left|\,\,|J\Pi_{c,t}|(x)-1\,\,\right|\leq C_{K,c_0,t_0} s \,\,\text{ and }\,\,\left|\,\,\frac{1}{|J\Pi_K|(x)}-1\,\,\right|\leq C_{K,c_0,t_0} s,
\ee
where $|J\Pi_{c,t}|=\sqrt{(D \Pi_{c,t})(D \Pi_{c,t})^T}$ and $s$ is the distance of $x$ to $K_{c,t,\tilde{\phi}}$.
\end{oss}

\begin{oss}
 \label{oss:area_decr_1}
Choosing a suitably small $t_0\leq \tilde{t}_0$, $t_0>0$, we can further ensure that the area of the immersion $q=(y,v)\in \text{Int}(K)\to \text{exp}_{y}\left((t \tilde{\phi}(q)) v\right)$ is strictly decreasing in $t$ on the interval $[0,t_0]$. This follows upon noticing that the first variation (with respect to area) at $t=0$ is $0$ (by minimality of $M$) and the second variation at $t=0$ is negative by Lemma \ref{lem:unstableregion_Ricci7} (see Remark \ref{oss:geom_counterpart_Ricci7}). Remark that the immersions $q=(y,v)\in \text{Int}(K_B)\to \text{exp}_{y}\left((t \tilde{\phi}(q)) v\right)$ (the previous family of immersions restricted to $\text{Int}(K_B)$) have the same area-decreasing property, since $\tilde{\phi}=0$ on $\tilde{D}$. For the latter family of immersions, the area at $t=0$ is $\mathcal{H}^n(K) - \mathcal{H}^n(\tilde{B})\leq 2\mathcal{H}^n(M) - 2\mathcal{H}^n(B)$.
\end{oss}

\begin{lem}
\label{lem:unstable_addcnst_region_Ricci7}
Let $t_0$ be as in Remark \ref{oss:area_decr_1}. There exist $c_0\in (0,\tilde{c}_0]$ and $\tau>0$ such that 

\begin{description}
 \item[(i)] for all $c\in[0,c_0]$ and for all $t\in [0,t_0]$ the area of the immersion
 $$q=(y,v)\in \text{Int}(K_B)\to \text{exp}_{y}\left((c+t \tilde{\phi}(q)) v\right)$$
 is $\leq {\Hc}^n(K)-\frac{3}{4}{\Hc}^n(\tilde{B})=  {\Hc}^n(K)-\frac{3}{2}{\Hc}^n(B)$;
 \item[(ii)] for all $c\in[0,c_0]$ the area of the immersion 
 $$q=(y,v)\in \text{Int}(K)\to \text{exp}_{y}\left((c+t_0\tilde{\phi}(q)) v\right)$$
 is $\leq {\Hc}^n(K)-\tau$.
\end{description}
\end{lem}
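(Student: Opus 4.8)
The plan is to obtain both statements from two one-parameter continuity (compactness) arguments, combined with the strict area-decrease in Remark~\ref{oss:area_decr_1}.

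\medskip

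\textbf{Setup and part (ii).} For $c\in[0,\tilde{c}_0]$ and $t\in[0,\tilde{t}_0]$ write $\mathcal{A}_K(c,t)$ for the $n$-dimensional area of the immersion $q=(y,v)\in\text{Int}(K)\to\text{exp}_{y}\bigl((c+t\tilde{\phi}(q))v\bigr)$, and similarly $\mathcal{A}_{K_B}(c,t)$ for the one over $\text{Int}(K_B)$. These areas are computed by integrating the area element of the graph $\{s=c+t\tilde{\phi}(q)\}$ in $V_{\tilde{M}}$ against the pull-back metric; since $\tilde{\phi}\in C^2_c$ and $F$ is smooth on the closure of $K\times[0,c_K)$ (here we use $c_K<\sigma_{(y,v)}$ on $K$ and the continuity of $\sigma$ from Remark~\ref{oss:retract}), the map $(c,t)\mapsto\mathcal{A}_K(c,t)$ is continuous — indeed $C^1$ — on $[0,\tilde{c}_0]\times[0,\tilde{t}_0]$, and likewise for $\mathcal{A}_{K_B}$. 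Now fix $t_0$ as in Remark~\ref{oss:area_decr_1}. Remark~\ref{oss:area_decr_1} gives $\mathcal{A}_K(0,t_0)<\mathcal{A}_K(0,0)=\mathcal{H}^n(K)$, so set $\tau:=\tfrac12\bigl(\mathcal{H}^n(K)-\mathcal{A}_K(0,t_0)\bigr)>0$. By continuity of $c\mapsto\mathcal{A}_K(c,t_0)$ at $c=0$, there is $c_0'\in(0,\tilde{c}_0]$ with $\mathcal{A}_K(c,t_0)\le\mathcal{H}^n(K)-\tau$ for all $c\in[0,c_0']$, which is exactly (ii).

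\medskip

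\textbf{Part (i).} Here the target quantity is $\mathcal{A}_{K_B}(c,t)$ for \emph{all} $t\in[0,t_0]$ (not just $t=t_0$). At $c=0$ we have, by the area-decreasing property on $K_B$ noted in Remark~\ref{oss:area_decr_1} (valid since $\tilde{\phi}\equiv0$ on $\tilde D$), $\mathcal{A}_{K_B}(0,t)\le\mathcal{A}_{K_B}(0,0)=\mathcal{H}^n(K)-\mathcal{H}^n(\tilde B)$ for every $t\in[0,t_0]$. Since $\mathcal{H}^n(K)-\mathcal{H}^n(\tilde B)<\mathcal{H}^n(K)-\tfrac34\mathcal{H}^n(\tilde B)$, it suffices to control the $c$-dependence uniformly in $t$. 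By the $C^1$ regularity of $(c,t)\mapsto\mathcal{A}_{K_B}(c,t)$ on the compact set $[0,\tilde{c}_0]\times[0,t_0]$, the partial derivative $\partial_c\mathcal{A}_{K_B}$ is bounded by some constant $L$ there; hence $\mathcal{A}_{K_B}(c,t)\le\mathcal{A}_{K_B}(0,t)+Lc\le\mathcal{H}^n(K)-\mathcal{H}^n(\tilde B)+Lc$ for all $t\in[0,t_0]$. Choosing $c_0''\in(0,\tilde{c}_0]$ with $Lc_0''\le\tfrac14\mathcal{H}^n(\tilde B)$ gives $\mathcal{A}_{K_B}(c,t)\le\mathcal{H}^n(K)-\tfrac34\mathcal{H}^n(\tilde B)$ for all $c\in[0,c_0'']$ and $t\in[0,t_0]$, which is (i). Finally set $c_0:=\min\{c_0',c_0''\}$; both (i) and (ii) hold with this $c_0$ and the $\tau$ defined above, and $\mathcal{H}^n(\tilde B)=2\mathcal{H}^n(B)$ since $\tilde B=\iota^{-1}(B)$ is a double cover of $B$, giving the stated equality in (i).

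\medskip

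The only mildly delicate point — which I expect to be the main (though modest) obstacle — is to justify rigorously that $\mathcal{A}_K(c,t)$ and $\mathcal{A}_{K_B}(c,t)$ are jointly $C^1$ (or at least jointly continuous with a $c$-derivative bounded uniformly in $t$) all the way up to $c=0$ and up to the boundaries $\partial K$, $\partial K_B$. This is where one uses that $\tilde{\phi}$ vanishes near $\partial K$ (Remark~\ref{oss:extend_boundary}), so the immersions extend smoothly across the boundary and agree there with the level set $\Gamma_c$, and that $F$ together with the pull-back metric is smooth on a neighbourhood of $\overline{K\times[0,c_K)}$ in $V_{\tilde{M}}$; the area element is then a smooth function of $(q,c,t,\nabla_q\tilde{\phi})$ bounded away from degeneracy, so differentiating under the integral sign is legitimate and the bounds are uniform in $t\in[0,t_0]$ by compactness. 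Everything else is elementary continuity plus the already-established strict second-variation negativity.
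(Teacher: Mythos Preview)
Your proof is correct and follows essentially the same continuity/compactness strategy as the paper. Part (ii) is identical to the paper's argument (the paper also sets $2\tau$ equal to the area drop at $c=0$, $t=t_0$ and invokes continuity in $c$). For part (i) there is a minor difference worth noting: you use a uniform Lipschitz bound coming from $C^1$ regularity of $(c,t)\mapsto\mathcal{A}_{K_B}(c,t)$, whereas the paper argues by contradiction and sequential compactness (if the bound fails along $c_i\to 0$, $t_i\in[0,t_0]$, pass to a subsequence $t_i\to t$ and use joint continuity to get $\mathcal{A}_{K_B}(0,t)\ge \mathcal{H}^n(K)-\tfrac32\mathcal{H}^n(B)$, contradicting Remark~\ref{oss:area_decr_1}). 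The paper's version has the small advantage that it needs only \emph{continuity} of the area functional, not $C^1$ regularity, so the ``mildly delicate point'' you flag about differentiating under the integral sign up to $c=0$ and across $\partial K$, $\partial K_B$ can be sidestepped entirely. Both routes are elementary and equally valid here.
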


\begin{proof}
Let us prove that (i) holds for some $c'_0\in (0,\tilde{c}_0]$ (in place of $c_0$). Argue by contradiction: if not, then there exists $c_i\to 0$ and $t_i \in [0,t_0]$ such that the area of $q\in \text{Int}(K_B)\to \text{exp}_{y}\left((c_i+t_i \tilde{\phi}(q)) v\right)$ is $\geq 2({\Hc}^n(M)-\frac{3}{4}{\Hc}^n(B))$ for all $i$. Upon extracting a subsequence we may assume $t_i \to t \in [0,t_0]$ and by continuity of the area we get that the area of $q\in \text{Int}(K_B)\to \text{exp}_{y}\left((t \tilde{\phi}(q)) v\right)$ is $\geq ({\Hc}^n(K)-\frac{3}{2}{\Hc}^n(B))$. This is however in contradiction with Remark \ref{oss:area_decr_1}, which says that this area is $\leq {\Hc}^n(K)-2{\Hc}^n(B)$.

Let us prove that (ii) holds for some $c''_0\in (0,\tilde{c}_0]$ (in place of $c_0$) and for some $\tau>0$. By Remark \ref{oss:area_decr_1} the area of $q=(y,v)\in \text{Int}(K)\to \text{exp}_{y}\left((t_0\tilde{\phi}(q)) v\right)$ is strictly smaller than ${\Hc}^n(K)$. Denote by $2\tau$ the difference of the two areas. By continuity, there exists $c''_0>0$ such that for all $c\in [0,c''_0]$ the area of the immersion $q=(y,v)\in \text{Int}(K)\to \text{exp}_{y}\left((c+t_0\tilde{\phi}(q)) v\right)$ is smaller than ${\Hc}^n(K)-\tau$.

Choosing $c_0=\min\{c'_0, c''_0\}$ concludes. 
\end{proof}

We will write, in Section \ref{all_paths}, Allen--Cahn approximation of the immersions in Lemma \ref{lem:unstable_addcnst_region_Ricci7}. To that end, we will work in the tubular neighbourhoods specified in Remark \ref{oss:Pi_ct}.

\medskip

\textit{Signed distance $\text{dist}_{K_{c,t,\tilde{\phi}}}$.} To write Allen--Cahn approximation of the immersions in Lemma \ref{lem:unstable_addcnst_region_Ricci7} we will need to use the following notion of signed distance to $K_{c,t,\tilde{\phi}}$. Recall that $\tilde{\phi}\geq 0$ is smooth and $\tilde{\phi}=0$ in a neighbourhood of $\p K$. In the coordinates of $V_{\tilde{M}}$, $K_{c,t,\tilde{\phi}}$ is identified with a graph, namely (for $c\in [0,c_0]$ and $t\in [0,t_0]$)
$$F^{-1}\left(K_{c,t,\tilde{\phi}}\right)=\{(q,s)\in K\times [0,c_K):s=c+t\tilde{\phi}(q)\}.$$
We define, on $K\times (0,c_K)$, the following ``signed distance to $F^{-1}\left(K_{c,t,\tilde{\phi}}\right)$'', for $c>0$. First we decide the sign of the distance: we say that $(q,s)\in K\times (0,c_K)$ has negative distance to $F^{-1}\left(K_{c,t,\tilde{\phi}}\right)$ if $s<c+t\tilde{\phi}(q)$ and positive distance to $F^{-1}\left(K_{c,t,\tilde{\phi}}\right)$ if $s>c+t\tilde{\phi}(q)$. Next we define its modulus. The modulus of the signed distance is the unsigned distance of $(q,s)$ to $F^{-1}\left(K_{c,t,\tilde{\phi}}\right)$ in $K\times (0,c_K)$ (recall that $K\times (0,c_K)$ is endowed with the Riemannian metric pulled back from $N$). Note that if $(q,s) \in F^{-1}\left(K_{c,t,\tilde{\phi}}\right)$ then the distance extends smoothly at $(q,s)$ with value $0$. Also remark that we do not define the unsigned distance on $K\times \{0\}$.
The signed distance just defined descends to a smooth function on $F\left(K\times (0,c_K)\right)\subset N$ that we will denote by $\text{dist}_{K_{c,t,\tilde{\phi}}}$. The set $F\left(K\times (0,c_K)\right)$ is an open tubular neighbourhood of $\iota(K)$ of semi-width $c_K$, with $M$ removed.

\section{Allen--Cahn approximations and paths in $W^{1,2}(N)$}
\label{all_paths}

The overall aim in the sections that follow is to produce, for all sufficiently small $\eps$, a continuous path in $W^{1,2}(N)$ that starts at the constant $-1$, ends at the constant $+1$ and such that $\Ece$ is bounded by $\approx 2\mathcal{H}^{n}(M) -\min\left\{\frac{\mathcal{H}^n(B)}{2},\frac{\tau}{2}\right\}$, where $B$ and $\tau$ were chosen respectively in Remark \ref{oss:choiceB_Ricci7} and Lemma \ref{lem:unstable_addcnst_region_Ricci7} and depend only on geometric data (not on $\eps$). Theorem \ref{thm:compare2M_Ricci7} (and Theorems \ref{thm:mult1_Ricci7}, \ref{thm:two-sided}) will follow immediately once this is achieved.

\subsection{Choice of $\eps$}
\label{choice_eps}
Let $B$ be as in Remark \ref{oss:choiceB_Ricci7} and $c_0$, $t_0$, $\tau$ be as in Lemma \ref{lem:unstable_addcnst_region_Ricci7}. The geometric quantities ${\Hc}^n(B)$ and $\tau$ are relevant in the forthcoming construction. 

In the following sections we are going to exhibit, for every sufficiently small $\eps$, a continuous path in $W^{1,2}(N)$ with $\Ece$ suitably bounded along the whole path. We will specify now an initial choice $\eps<\eps_1$ that permits the construction of the $W^{1,2}$-functions describing the path. When we will estimate $\Ece$ along the path, we will do so in terms of geometric quantities (typically, areas of cetain hypersurfaces, hence independent of $\eps$) plus errors that will depend on $\eps$. For sufficiently small $\eps$, i.e.~$\eps<\eps_2$ for a choice of $\eps_2\leq \eps_1$ to be specified, these errors will be $\leq C (\eps|\log\eps|)$, for some $C>0$ independent of $\eps$; we will not keep track of the constants and will instead write $O(\eps|\log\eps|)$. At the very end (Section \ref{final_argument_Ricci7}), in order to make these errors much smaller than  $\tau$ and $\mathcal{H}^n(B)$, and thus have an effective estimate on $\Ece$ along the path, we may need to revisit the smallness choice: for some $\eps_3$, (possibly $\eps_3\leq \eps_2$) we will get that for $\eps<\eps_3$  the errors can be absorbed in the geometric quantities. Therefore for $\eps<\eps_3$, we will have an upper bound for $\Ece$ along the path that is independent of $\eps$.

Now we choose $\eps_1$. The choices of $\eps_2$, $\eps_3$ will be made as we proceed into the forthcoming arguments. We restrict to $\eps_1<1$, so that the $O(\eps^2)$ controls that we have on the approximated one-dimensional solutions in Section \ref{one_dim_profiles} are valid for all $\eps<\eps_1$. We then require $\eps_1<\frac{1}{e}$ so to have $\eps|\log\eps|$ is decreasing as $\eps$ decreases so that the conditions specified on $\eps_1$ hold also for each $\eps<\eps_1$ and, moreover,

$$6{\eps}_1|\log{\eps}_1|< \frac{c_0}{20}$$ (and implicitly $<\frac{1}{2}\om$). Since the quantity $6{\eps}|\log{\eps}|$ will appear frequently (due to the choice of truncation in Section \ref{one_dim_profiles}), we will use the shorthand notation $\Lambda=3|\log\eps|$, when working at fixed $\eps$.

\subsection{Allen--Cahn approximation of $2(|M|-|B|)$}

Recall the function $G_0^{\eps}:N\to \R$ defined in (\ref{eq:Gt}), which is an Allen--Cahn approximation of $\iota:\tilde{M}\to N$, i.e.~a $W^{1,2}$ function with nodal set close to the image of $\iota$ and such that its Allen--Cahn energy $\Ece(G_0^{\eps})$ is approximately\footnote{In Section \ref{level_sets} we only established an upper bound for $\Ece(G^{\eps}_0)$, and most of the times an upper bound is all that will matter for our Allen--Cahn approximations (although a lower bound in terms of the area of the correspoding immersion is also going to be always valid). In the case of $G^{\eps}_0$, such a lower bound for $\Ece(G^{\eps}_0)$ will be established later.} the area of $\iota$ (i.e.~$\approx 2\mathcal{H}^n(M)$). Due to the fact that we replace hypersurfaces by non-sharp transitions, the function $G_0^{\eps}$ can also be thought of as an Allen--Cahn approximation of $\Gamma_{2\eps\Lambda}$ (that is exactly the nodal set of $G_0^{\eps}$).

\medskip

\textit{Definition of $f$.} We will now ``remove the ball $B$'' from $G_0^{\eps}:N\to \R$. In other words, we will write an Allen--Cahn approximation $f$ of $2 (|M|-|B|)$, or, equivalently, of $\left.\iota\right|_{\tilde{M}\setminus \tilde{B}}$. Always because we have non-sharp transitions, we can think of $f$ also as an Allen--Cahn approximation of $\Gamma_{2\eps\Lambda}$ with two balls removed. Although $f=f^{\eps}$ does depend on $\eps$, we drop the $\eps$ for notational convenience. What is important to keep in mind is that we can perform the contruction of $f$ given below for any $\eps<\eps_1$ and that we will obtain estimates on $\Ece(f^{\eps})$ that are uniform in $\eps$.

To this end, we let $\chi\in C^\infty_c(\tilde{M})$ be smooth and even (i.e.~$\chi(p)=\chi(q)$ if $\iota(p)=\iota(q)$) such that $\chi=1$ on $\tilde{B}$, $|\nabla \chi|\leq \frac{2}{R}$, where $R$ is the radius of $B$, and $\text{supp}\chi \subset \subset \tilde{D}$. Then we define, using coordinates $(q,s)\in K\times [0,c_K)\subset V_{\tilde{M}}$,

\be
\label{eq:G0_B}
G^{\eps}_{0,B}(q,s) = \Psi_{4\eps\Lambda\chi(q)}(s),
\ee
where $\Psi_t$ is as in (\ref{eq:family2}). Since $\chi$ is even, the function $G^{\eps}_{0,B}$ descends to a well-defined function $f$ on $F\left(K\times [0,c_K)\right)$ (this is a tubular neighbourhood of semi-width $c_K$ around $\iota(K)$). Note that $f$ agrees with $G^{\eps}_{0}$ on $F\left((K\setminus \tilde{D})\times [0,c_K)\right)$ and on $F\left((K \times (c_K/2,c_K)\right)$ (on the latter both are equal to $-1$), therefore we extend $f$ to $N$ by setting it equal to $G_0^{\eps}$ on $N\setminus F(K\times [0,c_K))$,
\be
\label{eq:f}
f(x)= \left\{\begin{array}{ccc}
                          G^{\eps}_{0} & \text{ for } x\in N\setminus F\left(K\times [0,c_K)\right) \\
G^{\eps}_{0,B}(F^{-1}(x))& \text{ for } x\in F\left(K\times [0,c_K)\right)
                         \end{array}\right. ,
                         \ee
and obtain that $f$ is $W^{1,\infty}$ on the complement of $F(\overline{\tilde{D}}\times [0,c_K/2])$.
Since $\Psi_t(x)$ is even and Lipschitz on $\R$, see (\ref{eq:family2}), we will in fact conclude that $f$ is $W^{1,\infty}$ on $N$. We only need to check it around points $x\in D$. Let $\chi_0:M\to \R$ be defined by $\chi_0(y)=\chi(\iota^{-1}(y))$; this is a smooth function compactly supported in $D$. In a neighbourhood of $x\in D$ we can choose a small geodesic ball $B_r(x)\subset M$ and use Fermi coordinates $(y,a) \in B_r(x)\times (-c_K,c_K)$. Then in this neighbourhood $f(y,a)=\Psi_{4\eps\Lambda\chi_0(y)}(a)$, hence $f$ is Lipschitz on $B_r(x)\times (-c_K,c_K)$. (The Jacobian factor that measures the distortion of the Riemannian metric from the product metric on $B_r(x)\times (-c_K,c_K)$ is bounded by a constant that only depends on the geometric data $F(K)\subset M \subset N$; therefore it suffices to observe that $\Psi_{4\eps\Lambda\chi_0(y)}(a)$ is Lipschitz with respect to the product metric.) Therefore $f\in W^{1,\infty}(N)$.

\medskip

\textit{Allen--Cahn energy of $f$.} To estimate from above the Allen--Cahn energy of $f$, since $f=G_0^{\eps}$ in the complement of $F\left(D\times [0,c_K]\right)$ and we estimated $\Ece(G^{\eps}_0)$ in (\ref{eq:G_t_energy}), we only need to compute the energy of $f$ on $F\left(D\times [0,c_K]\right)$ (and, similarly, the energy of $G^{\eps}_0$ on $F\left(D\times [0,c_K]\right)$). We can therefore use coordinates $(q,s)$ on $\tilde{D}\times [0,c_K] \subset V_{\tilde{M}}$ as in (\ref{eq:G0_B}) and apply the coarea formula (for the function $\Pi_K (q,s) = (q,0)$, whose Jacobian determinant $|J \Pi_K|$ is computed with respect to the Riemannian metric induced from $N$):
\be
\label{eq:comp_energy_G_0B}
\int_{F(D\times [0,c_K])} \eps\frac{|\nabla f|^2}{2} + \frac{W(f)}{\eps} =\int_{\tilde{B} \times (0,{c_K})} \eps |\nabla G^{\eps}_{0,B}|^2 +  \frac{W(G^{\eps}_{0,B})}{\eps}\,\,\,+
\ee
$$+\int_{\tilde{D}\setminus \tilde{B}} \left( \int_{(0, {c_K})}\frac{1}{|J\Pi_K|}\left( \eps \left|\frac{\p}{\p s} G^{\eps}_{0,B}\right|^2 + \frac{W(G^{\eps}_{0,B})}{\eps} \right)  ds\right) dq +$$ $$+ \int_{(\tilde{D}\setminus \tilde{B})\times (0,{c_K})}  \eps |\nabla_q G^{\eps}_{0,B}|^2 .$$
The notation $\nabla_q$ stands for the gradient projected onto the level sets of $s$. By definition of $G^{\eps}_{0,B}$ we have, in $\tilde{D}\times (0,c_K)$:
$$\nabla_q G^{\eps}_{0,B} = \left. \frac{d}{da}(\Psi_a)(z)\right|_{a=4\eps\Lambda\chi(q)} 4\eps\Lambda\nabla_q \chi$$ 
and since ($R$ denotes the radius of $B$) $|\nabla \chi|\leq \frac{2}{R}$, $|\frac{d}{da}(\Psi_a)(z)| = |\Psi'(|z|+a)| \leq \frac{3}{\eps}$, this implies ($\Lambda=3|\log\eps|$)

\begin{equation}
 \label{eq:tangential_Dirichel_estimate_Ricci7}
\eps|\nabla_q G^{\eps}_{0,B}|^2 \leq \eps \frac{C}{\eps^2} \frac{\eps^2 |\log\eps|^2}{R^2}= \frac{C \eps |\log\eps|^2}{R^2}. 
\end{equation}
(Here $C=(8\cdot 6)^2$.) Since $\tilde{B}, \tilde{D}$, $R$ and $C$ are independent of $\eps$, (\ref{eq:tangential_Dirichel_estimate_Ricci7}) implies that the third term on the right-hand-side of (\ref{eq:comp_energy_G_0B}) can be made arbitrarily small by choosing $\eps$ sufficiently small; this term is $O(\eps^2|\log\eps|^3)$, since the integrand is zero on $(\tilde{D}\setminus \tilde{B})\times (4\eps\Lambda,{c_K})$.
The first term on the right-hand-side of (\ref{eq:comp_energy_G_0B}) vanishes because $G^{\eps}_{0,B}=-1$ on that domain. For the second term on the right-hand-side of (\ref{eq:comp_energy_G_0B}), note that the inner integral only gives a contribution in $[0, 4\eps\Lambda]$ ($G^{\eps}_{0,B}=-1$ on $s\in[4\eps\Lambda,c_K]$). Recalling the bounds on the Jacobian factor $|J\Pi_K|$ given in (\ref{eq:JacPi}) and the energy estimates on the one-dimensional profiles, see (\ref{eq:hetenergy}) and (\ref{eq:family2}), we find  

$$\text{second term on right-hand side of (\ref{eq:comp_energy_G_0B})} \leq $$ $$\leq(1+8\eps\Lambda C_{K}) \int_{\tilde{D}\setminus \tilde{B}} \left(\int_0^{4\eps\Lambda} \frac{1}{2}\eps\left(\Psi_{4\eps\Lambda\chi(q)}^\prime\right)^2 + \frac{W(\Psi_{4\eps\Lambda\chi(q)})}{\eps}\right) dq$$ $$\leq {\Hc}^n(\tilde{D}\setminus \tilde{B})\,(1+8\eps\Lambda C_{K})\Ece( \OHet^{\eps})\leq $$ $$\leq ({\Hc}^n(\tilde{D})-\mathcal{H}^n(\tilde{B}))\,(1+8\eps\Lambda C_K)(2\sigma+O({\eps}^2)).$$
We can thus rewrite (\ref{eq:comp_energy_G_0B}) as a leading term $2\sigma\left({\Hc}^n(\tilde{D})-\mathcal{H}^n(\tilde{B})\right)$ plus errors; for a sufficiently small choice of $\eps_2\leq \eps_1$, for $\eps<\eps_2$, all errors are of the type $O(\eps|\log\eps|)$. We therefore conclude that the following estimate holds for all $\eps<\eps_2$:
$$\int_{F(D\times [0,c_K])} \eps\frac{|\nabla f|^2}{2} + \frac{W(f)}{\eps} \leq  4\sigma ({\Hc}^n(D)-\mathcal{H}^n(B)) + O(\eps|\log\eps|).$$

\medskip

Going back to $G_0^{\eps}$, we can give a lower bound to its energy on $F(D\times [0,c_K])$ with a computation analogous to the one just carried out. With coordinates $(q,s)\in D\times [0,c_K]$ we have that $G_0^{\eps}$ is simply the function $\Psi(s)$ and therefore $|\nabla G_0^{\eps}|$ is given by $\left|\frac{\p}{\p s} \Psi(s)\right|$ (the gradient is parallel to the $\frac{\p}{\p s}$). Using the coarea formula (again\footnote{It would also be possible to use the coarea formula slicing by the distance to $M$, as done in (\ref{eq:G_t_energy}), making use of Lemma \ref{lem:Gamma_t}.} with $\Pi_K$) we get that
$$\int_{F(D\times [0,c_K])} \eps\frac{|\nabla G_0^{\eps}|^2}{2} + \frac{W(G_0^{\eps})}{\eps} =$$
\be
\label{eq:G_0altracoarea}
=\int_{\tilde{D}} \left( \int_{0}^{4\eps\Lambda}\frac{1}{|J\Pi_K|}\left( \eps \left|\frac{\p}{\p s} \Psi(s)\right|^2 + \frac{W(\Psi(s))}{\eps} \right)  ds\right) dq\geq
\ee
$$\geq \mathcal{H}^n(\tilde{D})(1-8\eps \Lambda C_K)(2\sigma+O({\eps}^2)),$$
where we used (\ref{eq:JacPi}) and (\ref{eq:hetenergy}) and (\ref{eq:family2}). The result in (\ref{eq:G_0altracoarea}) is of the form $2\sigma \mathcal{H}^n(\tilde{D})$ plus errors. The errors are of the form $O(\eps|\log\eps|)$ for all $\eps<\eps_2$ for some suitably small choice of $\eps_2\leq \eps_1$. 

\begin{oss}[on the choice of ${\eps}_2$]
We make the choice of $\eps_2$ several times along the construction, always with the scope of making the errors controlled by $C\eps|\log\eps|$ with $C$ independent of $\eps \in (0,\eps_2)$. The specific value $\eps_2$ might change from one instance to the next, but since we make finitely many choices we implicitly assume that the correct $\eps_2$ is the smallest of all. From now on, this remark will apply every time we say that the errors are of the form $O(\eps|\log\eps|)$ for all $\eps<\eps_2$ for some suitably small choice of $\eps_2$. 
\end{oss}

\medskip

In conclusion for all $\eps<{\eps}_2$ we have that 
$$\frac{1}{2\sigma}\int_{F(D\times [0,c_K])} \eps\frac{|\nabla G_0^{\eps}|^2}{2} + \frac{W(G_0^{\eps})}{\eps} - \frac{1}{2\sigma}\int_{F(D\times [0,c_K])} \eps\frac{|\nabla f|^2}{2} + \frac{W(f)}{\eps} \geq$$ 
\be
\label{eq:fromGtof}
\geq 2\mathcal{H}^n(B) - |O(\eps|\log\eps|)|.
\ee
 
Recall that $f$ does depend on $\eps$, although we are not expliciting the dependence for notational convenience, and that we can produce $f$ (as defined above) for every $\eps<\eps_1$. By (\ref{eq:G_t_energy}) and (\ref{eq:fromGtof}), and the fact that $f=G_0^{\eps}$ on $N\setminus F(D\times [0,c_K])$, we conclude that for a sufficiently small choice of $\eps_2\leq \eps_1$, for all $\eps<\eps_2$, the following estimate holds:
\be
\label{eq:energy_f}
\Ece(f) \leq 2\left({\Hc}^n(M)-{\Hc}^n(B)\right)+O(\eps|\log\eps|).
\ee
This says that $f$ is a good\footnote{We only need the upper bound (\ref{eq:energy_f}), however a lower bound of the form $\Ece(f)\geq 2\left({\Hc}^n(\iota(K))-{\Hc}^n(D)\right)-O(\eps|\log\eps|)$ is also easily seen to be valid.}  Allen--Cahn approximation of $2(|M|-|B|)$.
In terms of the immersions of Lemma \ref{lem:unstable_addcnst_region_Ricci7}, $f$ is also an Allen--Cahn approximation of $q=(y,v)\in \text{Int}(K_B)\to \text{exp}_{y}\left(2\eps\Lambda v\right)$ (the nodal set of $f$ contains the image of this immersion with boundary).

\subsection{From $\Ece(-1)=0$ to $2 (|M|-|B|)$}

In this section we construct a continuous path in $W^{1,2}(N)$ that joins $f$ to the constant $-1$, keeping $\Ece$ along the path controlled by $\Ece(f)$.

We begin by introducing the following one-parameter family of functions: for $r\in[0,4\eps\Lambda]$ define

\be
\label{eq:Gr}
Y^{\eps}_r(x) = \left\{\begin{array}{ccc}
                          -1 & \text{ for } x\in N\setminus T_\om \\
\Psi_r(\dm(x))& \text{ for } x\in T_\om
                         \end{array}\right. ,
                         \ee
where $\Psi_r$ is as in (\ref{eq:family2}). Since $\OHet^{\eps}$ is constantly $-1$ on $(-\infty, -2\eps\Lambda]$, the function $Y^{\eps}_t$ is constantly $-1$ on $\{x:\dm(x)> 4\eps\Lambda-r\}$. Moreover, since $\dm$ is Lipschitz on $N$ and $\Psi_r$ is Lipschitz on $\R$, we have $|\Psi_r(\dm(x)) - \Psi_r(\dm(y))|\leq C_{\Psi_r} |\dm(x)-\dm(y)|\leq C_{\Psi_r} C_{\dm} \text{dist}(x,y)$, where $C_{\Psi_r}$, $C_{\dm}$ denote the Lipschitz constants of $\Psi_r$ and $\dm$ respectively. We therefore conclude that $Y^{\eps}_r \in W^{1,\infty}(N)$.

Notice that $Y^{\eps}_0=G^{\eps}_0$. We compute $\Ece(Y^{\eps}_0)$ by using the coarea formula (slicing by the distance function $\dm$, for which $|\nabla \dm|=1$) as we did for $G^{\eps}_0$ (see (\ref{eq:G_t_energy})). We obtain
\be
\label{eq:energy_Yr}
\Ece(Y_r^{\eps})\leq  2 \mathcal{H}^n(M) \left(\frac{1}{2\sigma}\int_{0}^{4\eps\Lambda-r}  \eps \frac{(\Psi_r^\prime)^2}{2} + \frac{W(\Psi_r)}{\eps}\right) \leq   2 \mathcal{H}^n(M) (1+O({\eps}^2)),
\ee
using (\ref{eq:hetenergy}) and the fact that $\int_{0}^{4\eps\Lambda-r}  \eps \frac{(\Psi_r^\prime)^2}{2} + \frac{W(\Psi_r)}{\eps}\leq \int_{0}^{\infty}  \eps \frac{(\Psi_r^\prime)^2}{2} + \frac{W(\Psi_r)}{\eps}=2\sigma+O({\eps}^2)$. Note that $\Ece(G_r^{\eps}) \to 0$ as $r\to 4\eps\Lambda$.

Now we give a lower bound for the energy of $Y_r^{\eps}$ on the domain $F(D\times [0,c_K))$ as we did for $G_0^{\eps}$ in (\ref{eq:G_0altracoarea}), i.e.~using the coarea formula for the function $\Pi_K$. Note that on this domain we can use the coordinates $(q,s)$ on $D\times [0,c_K)$ and the fact that the gradient of $Y_r^{\eps}$ is parallel to $\frac{\p}{\p s}$. We have

$$\int_{F(D\times [0,c_K])} \eps\frac{|\nabla Y_r^{\eps}|^2}{2} + \frac{W(Y_r^{\eps})}{\eps} =$$
\be
\label{eq:energy_Yr_lowerbound}
=\int_{\tilde{D}} \left( \int_{0}^{4\eps\Lambda-r}\frac{1}{|J\Pi_K|}\left( \eps \left|\frac{\p}{\p s} \Psi_r(s)\right|^2 + \frac{W(\Psi_r(s))}{\eps} \right)  ds\right) dq\geq
\ee
$$\geq 2\mathcal{H}^n(D)(1-4\eps \Lambda C_K) \int_{0}^{4\eps\Lambda-r}  \eps \frac{(\Psi_r^\prime)^2}{2} + \frac{W(\Psi_r)}{\eps},$$
where we used (\ref{eq:JacPi}) and the fact that the part in paretheses of the inner integrand is independent of $q$. We therefore conclude, from (\ref{eq:energy_Yr}) and (\ref{eq:energy_Yr_lowerbound}), the following estimate for the Allen-Cahn energy of $Y_r^{\eps}$ in the complement of $F(D\times [0,c_K))$: there exists $\eps_2\leq \eps_1$ sufficiently small such that for all $\eps<{\eps}_2$
\begin{equation}
\label{eq:energy_Yr_complement}
\int_{N\setminus F(D\times [0,c_K])} \eps\frac{|\nabla Y_r^{\eps}|^2}{2} + \frac{W(Y_r^{\eps})}{\eps} \leq 
\end{equation}
$$\leq 2\left(\mathcal{H}^n(M)-\mathcal{H}^n(D)\right) \int_{0}^{4\eps\Lambda-r}  \eps \frac{(\Psi_r^\prime)^2}{2} + \frac{W(\Psi_r)}{\eps} + O(\eps|\log\eps|).$$

\medskip

\textit{Definition of the path $f_r$}. We now define a continuous path $r\in[0,4\eps\Lambda]\to f_r\in W^{1,2}(N)$ as follows. Recalling the definition of $\chi\in C^{\infty}_c(\tilde{M})$ and using coordinates $(q,s)\in \tilde{D}\times [0,c_K)$ we set

$$Y_{r,B}(q,s) = \Psi_{4\eps\Lambda\chi(q)+r}(s),$$
where $\Psi_t$ is as in (\ref{eq:family2}).
The function $f_r:N\to \R$ is then defined by

\be
\label{eq:path-1_Ricci7}
f_r(x)= \left\{ \begin{array}{ccc}
             Y_r^{\eps}(x) & \text{ if } x\in N\setminus F(\tilde{D}\times [0,c_K))\\
             Y_{r,B}(F^{-1}(x)) & \text{ if } x\in F(\tilde{D}\times [0,c_K))
            \end{array}
\right. .
\ee
Note that $f_r$ is well-defined on $D$ since $\chi$ is even.
Remark also that for $r=0$ this function is $f$ and for $r=4\eps\Lambda$ it is the constant $-1$. Moreover, $f_r\in W^{1,\infty}(N)$ for every $r$. To see this, notice that $Y_{r,B}$ is smooth on $\tilde{D}\times (0,c_K)$, so $f_r$ is smooth on $F(\tilde{D}\times (0,c_K))$. Moreover, $f_r\in W^{1,\infty}(N\setminus F(\overline{\tilde{D}}\times [0,c_K]))$ because it agrees with $ Y_r^{\eps}$ on this open set. The smoothness at $F(\overline{\tilde{D}}\times\{c_K\})$ is immediate because $f_r=-1$ in a neighbourhood of $F(\overline{\tilde{D}}\times\{c_K\})$. We thus only need to check that $f_r$ is Lipschitz locally around any point $x\in \overline{D}$. Using Fermi coordinates $(y,a)\in B(x)\times (-\delta,\delta)$, where $B(x)$ is a small geodesic ball in $M$ centred at $x$ and $\delta>0$, we have the following expression for $f$, thanks to the fact that $\Psi_r:\R\to \R$ is even for every $r$: $f_r(y,a)=\Psi_{4\eps\Lambda\chi_0(y)+r}(a)$, where $\chi_0(p)=\chi(F^{-1}(p))$. Since $\Psi_r(z)$ is Lipschitz on $\{(r,z): r\in [0,\infty), z\in \R\}$, and since $\chi_0$ is smooth, we obtain that $f_r \in W^{1,\infty}$ on the chosen neighbourhood of $x$. (As we did in (\ref{eq:f}), we use the fact that being Lipschitz for the product metric on $B(x)\times (-\delta,\delta)$ implies Lipschitzianity for the Riemannian metric induced from $N$.) In conclusion we have $f\in W^{1,\infty}(N)$.

The path $r\in [0,4\eps\Lambda]\to f_r \in W^{1,2}(N)$ is moreover continuous. Let us check the continuity of $\nabla f_r$ in $r$ (with respect to the $L^2$ topology on $N$). For $\nabla f_r$ on $F(\tilde{D}\times [0,c_K))$ we have the following expression, using $(q,s)$-coordinates on $\tilde{D}\times (0,c_K)$:
$$\left(4\eps\Lambda \nabla \chi (q) \Psi^\prime_{0}(s+4\eps\Lambda\chi(q)+ r) ,\Psi^\prime_{0}(4\eps\Lambda\chi(q)+r+s)\right).$$
By continuiuty of translations in $L^p$ and smoothness of $\chi$ we get that $\nabla f_r$ is continuous in $r$ (with respect to the $L^2$-topology, or even $L^p$ for any $p$). Similarly we can argue for $T_\om \setminus F(\tilde{D}\times [0,c_K))$, where $f=Y_{r,0}^{\eps}$ and the gradient is $\Psi^\prime_{0}(r+\dm(x))\nabla \dm(x)$: this changes continuously with $r$ (with respect to the $L^2$-topology, or even $L^p$ for any $p$). Therefore we have that $r\in [0,4\eps\Lambda]\to \nabla f_r \in L^2(N)$ is continuous. The fact that $f_r$ changes continuously in $r$ with respect to the $L^2$ topology is even more straightforward.

\medskip

\textit{Energy along the path.} To estimate $\Ece(f_r)$ we compute the energy on $F(\tilde{D} \times [0,c_K))$ using the coarea formula for $\Pi_K$, similarly to (\ref{eq:energy_f}), in the coordinates $(q,s) \in \tilde{D} \times [0,c_K)$. Notice that $Y_{0,B}^{\eps}(q,s)=-1$ for $q\in B$. Then we obtain
$$\int_{F(\tilde{D} \times [0,c_K))} \eps \frac{|\nabla f_r|^2}{2} + \frac{W(f_r)}{\eps} =$$
$$ =\int_{\tilde{D}\setminus \tilde{B}} \int_0^{c_K} \frac{1}{|J\Pi_K|} \left(\eps \frac{|\Psi_r^\prime(s)|^2}{2} + \frac{W(\Psi_r(s))}{\eps} \right) ds \, dq \,\,    \underbrace{\leq}_{(\ref{eq:JacPi})} $$
$$\leq (1+8\eps\Lambda)\int_{\tilde{D}\setminus \tilde{B}} \int_0^{c_K} \left(\eps \frac{|\Psi_r^\prime(s)|^2}{2} + \frac{W(\Psi_r(s))}{\eps} \right) ds\,  dq\leq $$
$$\leq 2 (1+8\eps\Lambda) \mathcal{H}^n(D\setminus B) \left(\int_{r}^{4\eps\Lambda}  \eps \frac{(\Psi^\prime)^2}{2} + \frac{W(\Psi)}{\eps}\right).$$
Recalling that $f_r = Y_{r,B}^{\eps}$ on $N\setminus F(\tilde{D} \times [0,c_K))$ and by the estimate in ({\ref{eq:energy_Yr_complement}) we conclude that there exists $\eps_2\leq \eps_1$ such that for all $\eps\leq \eps_2$ the following estimates hold for $r\in[0,4\eps\Lambda]$:

\be
\label{eq:energy_path_1_Ricci7}
\Ece(f_r)\leq  2 \left(\mathcal{H}^n(M)-\mathcal{H}^n(B)\right)  \left(\frac{1}{2\sigma}\int_{r}^{4\eps\Lambda}  \eps \frac{(\Psi^\prime)^2}{2} + \frac{W(\Psi)}{\eps}\right)+O(\eps|\log\eps|),
\ee
$$\Ece(f_r)\leq 2 \left(\mathcal{H}^n(M)-\mathcal{H}^n(B)\right) +O(\eps|\log\eps|).$$
(The second follows from the first since the energy of $\Psi$ in parentheses is $\leq 1+O(\eps^2)$.) The second estimate shows the uniform energy control on $r\in[0,4\eps\Lambda]$; the first shows that $\Ece(f_r)\to 0$ as $r\to 4\eps\Lambda$.

\begin{oss}
At least for $n\leq 6$ it is possible to produce a continuous path from $f$ to $-1$, with a similar energy control as in (\ref{eq:energy_path_1_Ricci7}), by employing an alternative argument. One can employ the negative $\Ece$-gradient flow starting at a suitably constructed function $f_2$ that is $W^{1,2}$-close to $f$ and with $\Ece(f_2)\approx \Ece(f)$. The flow will be mean convex and will tend to the constant $-1$, reaching it in time $O(\eps|\log\eps|)$. The $\eps\to 0$ limit of such paths is then the Brakke flow that starts at $2(|M|-|B|)$ and vanishes instantaneously. The family (\ref{eq:path-1_Ricci7}) that we gave in this section mimics exactly this flow, however it is more elementary, even for $n\leq 6$, as we can exhibit the path explicitly (and moreover presents no additional difficulties for $n\geq 7$). Note that the path $f_r$ that we produced also reaches $-1$ in time $O(\eps|\log\eps|)$. 
\end{oss}

\subsection{Lowering the peak}
In this section we construct the next portion of our path, starting at $f$. The immersions in Lemma \ref{lem:unstable_addcnst_region_Ricci7} are particularly relevant, as they provide the geometric counterpart of this portion of the $W^{1,2}$-path: first we use the immersions in (i) of Lemma \ref{lem:unstable_addcnst_region_Ricci7} keeping $c=2\eps\Lambda$ and increasing $t$ from $0$ to $t_0$; then we connect the final immersion just obtained to the one in (ii) of Lemma \ref{lem:unstable_addcnst_region_Ricci7} with $t=t_0$ and $c=2\eps\Lambda$ (in doing so, we ``close the hole at $B$''). The portion of the path that we exhibit in this section is made of Allen--Cahn approximations of the immersions just described. It is this portion of the path that ``lowers the peak'' of $\Ece$, keeping it a fixed amount below $2\mathcal{H}^n(M)$ (thanks to the estimates in Lemma \ref{lem:unstable_addcnst_region_Ricci7}).

\medskip

We will keep using the shorthand notation $\Lambda=3|\log\eps|$. All the functions that we will construct in this section coincide with $G^{\eps}_0$ in the complement of $F(K\times [0,c_K))$. By construction they will in fact agree with $G^{\eps}_0$ in a neighbourhood of $\p F(K \times [0,c_K))$ (guaranteeing a smooth patching) and thanks to Remark \ref{oss:Pi_ct} and since $2\eps\Lambda <c_0/20$ (Section \ref{choice_eps}) we can use tubular neighbourhoods of semi-width $2\eps\Lambda$ around $K_{c,t,\tilde{\phi}}$ for every $c\geq 2\eps\Lambda$ to define Allen--Cahn approximations of the immersions in Lemma \ref{lem:unstable_addcnst_region_Ricci7}.

\medskip

Recall the notation $K_{c,t,\tilde{\phi}}$ from Section \ref{immersions}: it denotes the image via $F:V_{\tilde{M}}\to N$ of the graph $\{(q,s)\in V_{\tilde{M}}: q\in K, s=c+t\tilde{\phi}(q)\}$, for $t\in[0,t_0]$ and $c\in [0,c_0]$. In other words, $K_{c,t,\tilde{\phi}}$ is image of the immersion (smoothly extended up to $\p K$, see Remark \ref{oss:extend_boundary}) $q=(y,v)\in K \to \text{exp}_{y}\left((c+t \tilde{\phi}(q)) v\right)$.
Recall the definition of signed distance provided in Section \ref{immersions} and denote by $\text{dist}_{K_{c,t,\tilde{\phi}}}$ the signed distance to $K_{c,t,\tilde{\phi}}$, well-defined on $F(\tilde{D}\times (0,c_K))$. If $t=0$, then $\text{dist}_{K_{c,0,\tilde{\phi}}}$ extends continuously to $F(\tilde{D}\times [0,c_K))$ with value $-c$ on $F(\tilde{D}\times \{0\})$.

Then the definition of $f$ in (\ref{eq:G0_B})-(\ref{eq:f}), can equivalently be given as follows

$$f(x) =\left\{ \begin{array}{ccc}
\OHet^{\eps}_{4\eps\Lambda\chi_0(\Pi_K(x))}\left(-\text{dist}\left(x,K_{2\eps\Lambda,0,\tilde{\phi}}\right)\right)  & \text{ for } x\in F(K\times [0,c_K))\\
G_0^{\eps}(x) & \text{ for } x\in N\setminus F(\tilde{D}\times [0,c_K)),
                \end{array} \right.$$
where $$\OHet^{\eps}_{s}(\cdot)=\OHet^{\eps}(\cdot -s),$$ $\chi_0=\chi\circ F^{-1}$ and, with a slight abuse of notation, $\Pi_K(x)$ is the nearest point projection of $x$ onto $M$. (In the coordinates of $V_{\tilde{M}}$ we have $\Pi_K(q,s)=(q,0)$, which is the notation used in Section \ref{immersions}; the map on $F(K\times [0,c_K))$ that we are using above should then be $F\circ \Pi_K \circ F^{-1}$, we however denote both the map in $K\times [0,c_K)$ and the map in $F(K\times [0,c_K))$ by the same symbol $\Pi_K$.)

\begin{oss}
 \label{oss:extend_signed_distance}
The signed distance $\text{dist}_{K_{2\eps\Lambda,t, \tilde{\phi}}}(x)$ is defined on  $K  \times (0,c_K)$. 
We point out the following facts. Let $x\in F(K\times \{0\})$ and $x_j\to x$, $x_j\in K  \times (0,c_K)$ (so that the signed distance is negative on $x_j$ for $j$ sufficiently large). Then 
$$\limsup_{j\to \infty} \text{dist}_{K_{2\eps\Lambda,t, \tilde{\phi}}}(x_j) \leq -2\eps\Lambda.$$
Moreover, $\text{dist}_{K_{2\eps\Lambda,t, \tilde{\phi}}}$ extends continuously, with value $-2\eps\Lambda$, to $\left(K\setminus \text{supp}(\tilde{\phi})\right)\times \{0\}$. In particular, this continuous extension is valid on (an open neighbourhood of) $\overline{D}$.
\end{oss}

\textit{Definition of $g_t$.} We construct now the portion of path $t\in [0,t_0]\to g_t \in W^{1,2}(N)$ whose geometric counterpart is given by the immersions in (i) of Lemma \ref{lem:unstable_addcnst_region_Ricci7} with $c=2\eps\Lambda$ and $t\in[0,t_0]$. These immersions ``have a hole at $\tilde{B}$''. We set, for $t\in [0,t_0]$:

\be
\label{eq:gt_path2}
g_t(x)=\left\{\begin{array}{ccc}
G_0^{\eps}(x)\,\,\,\,\,\,\, (\text{see (\ref{eq:Gt})})& \text{ for } x\in  N\setminus F(K\times [0,c_K))\\
\OHet^{\eps}_{4\eps\Lambda\chi_0(\Pi_K(x))}(-\text{dist}_{K_{2\eps\Lambda,t, \tilde{\phi}}}(x)) & \text{ for }  x\in F(K \times (0,c_K))  \cup D  \\
1 & \text{ for }  x\in F((K\setminus \overline{D})\times \{0\})     \\
             \end{array}\right..
\ee
In the second line of (\ref{eq:gt_path2}) we are using the fact that $\text{dist}_{K_{2\eps\Lambda,t, \tilde{\phi}}}$ is well-defined and continuous on $\overline{D}$, with value $-2\eps\Lambda$ (see Remark \ref{oss:extend_signed_distance}). Also note that on $F(\p K\times [0,c_K))$ the definition in the second line agrees with the definition of $G_0^{\eps}$ ($\tilde{\phi}$ vanishes in a neighbourhood of $\p K$, see Section \ref{immersions}) and the same is true on $F(K\times \{c_K\})$ ($g_t=-1$ in a neighbourhood). For $t=0$ we have $g_0=f$, by the expresion of $f$ given earlier in this section.

\textit{$g_t \in W^{1,\infty}(N)$ for each $t$.} Let us check first that $g_t$ it is continuous on $N$ for each $t$. In view of the comments just made, this needs to be checked only at an arbitrary $x$ in $F((\text{Int}(K)\setminus \overline{D})\times \{0\})$. Let $x_j\to x$, then for sufficiently large $j$ we have $x_j\in F((\text{Int}(K)\setminus \overline{D})\times [0,c_K))$. Then $x, x_j \notin \text{supp}\chi_0 \times [0,c_K)$. Therefore by (\ref{eq:gt_path2}) we get $g_t(x_j)=\OHet^{\eps}(-\text{dist}_{K_{2\eps\Lambda,t, \tilde{\phi}}}(x_j))$. Recall Remark \ref{oss:extend_signed_distance}. By continuity of $\OHet^{\eps}$ and the fact that $\OHet^{\eps}(z)=1$ for $z\geq 2\eps\Lambda$ we conclude that $g_t(x_j)\to 1$, hence $g_t$ is continuous at $x$.

To check that $g_t\in W^{1,\infty}(N)$, note first that the definition in the second line of (\ref{eq:gt_path2}) is equal to the one of $G_0^{\eps}$ in a neighbourhood of the boundary of $F(K\times [0,c_K))$. Moreover $g_t$ is smooth on $F(K\times (0,c_K))$ and $G_0^{\eps}$ is $W^{1,\infty}(N)$. These fact imply that $g_t\in W^{1,\infty}(N\setminus F(K\times \{0\}))$, and actually even in a neighbourhood of the boundary of $F(K\times [0,c_K))$. Moreover, for $x\in B$ we have $g_t=-1$ in a neighbourhood of $x$, because  $\chi_0=1$ on $B$ and $\OHet^{\eps}_{4\eps\Lambda}(z)\equiv -1$ for $z\leq 2\eps\Lambda$.

Therefore we only need to check that $g_t$ is locally Lipschitz around points $x\in F((\text{Int}(K)\setminus \tilde{B})\times \{0\})$. We distinguish two cases. If $x\notin \overline{D}$, i.e.~if $x\in F((\text{Int}(K)\setminus \overline{\tilde{D}})\times \{0\})$, then $g_t$ is actually $C^1$ in a neighbourhood of $x$. This is seen by repeating the argument used above (for the continuity of $g_t$ at such point) to prove that $|\nabla g_t (x_j)|\to 0$ (using the fact that ${\OHet^{\eps}}^{\prime}$ is smooth on $\R$ and equal to $0$ on $[2\eps\Lambda,\infty)$). We therefore have: $g_t$ is $C^1$ on $F((\text{Int}(K)\setminus \overline{\tilde{D}})\times (0,c_K))$, $g_t$ extends continuously to $F((\text{Int}(K)\setminus \overline{\tilde{D}})\times \{0\})$ with constant value $1$ and $\nabla g_t$ extends continuously to this set with value $0$. From these facts it follows that the $L^\infty$ function equal to $\nabla g_t$ on $F((\text{Int}(K)\setminus \overline{\tilde{D}})\times (0,c_K))$ is the ditributional derivative of $g_t$ in a neighbourhood of $x$ and therefore $g_t$ is $C^1$ in a neighbourhood of $x$.
In the second case, i.e.~if $x \in \overline{D}\setminus B$, then for a sufficiently small ball $B_\rho(x)\subset M$ we have $\tilde{\phi}=0$ on $F^{-1}(B_\rho(x))$ (because $\text{supp}(\tilde{\phi})$ and $\overline{\tilde{D}}$ are disjoint) and we can use a well-defined system of Fermi coordinates $(y,a)\in B_\rho(x)\times (-c_K, c_K)$. In these coordinates we have $\text{dist}_{K_{2\eps\Lambda,t, \tilde{\phi}}}(F(y,a))=|a|-2\eps\Lambda$ and $g_t(F(y,a))=\OHet^{\eps}_{4\eps\Lambda\chi(y)}(-|a|+2\eps\Lambda)$, which is Lipschitz in the neighbourhood.

\medskip

\textit{The path $t\to g_t$ is continuous.} It suffices to check that the second line in (\ref{eq:gt_path2}) is continuous in $t$. The proof is analogous to the one where we proved that $f_r$ defined in (\ref{eq:path-1_Ricci7}) is continuous in $r$; it can be carried out using the coordinates on $V_{\tilde{M}}$ and the fact that the graph $\{(q,s):q\in K, s=2\eps\Lambda +t\tilde{\phi}(q)\}$ changes smoothly in $t$, hence so does the function $\text{dist}_{K_{2\eps\Lambda,t, \tilde{\phi}}}$. Therefore the path $t\in [0,t_0]\to g_t \in W^{1,2}(N)$ is continuous.

\medskip

\textit{Energy of $g_t$.} To give an upper bound for $\Ece(g_t)$ we first need a lower bound for the energy of $G_0^{\eps}$ on $F(K\times [0,c_K))$. This is analogous to the estimate in (\ref{eq:G_0altracoarea}):

$$\int_{F(K\times [0,c_K])} \eps\frac{|\nabla G_0^{\eps}|^2}{2} + \frac{W(G_0^{\eps})}{\eps} =$$
$$=\int_{K} \left( \int_{0}^{4\eps\Lambda}\frac{1}{|J\Pi_K|}\left( \eps \left|\frac{\p}{\p s} \Psi_0(s)\right|^2 + \frac{W(\Psi_0(s))}{\eps} \right)  ds\right) dq\,\,\,\underbrace{\geq}_{(\ref{eq:JacPi})}$$
$$\geq (1-8\eps \Lambda C_K) \int_{K} \left( \int_{0}^{4\eps\Lambda}\left( \eps \left|\frac{\p}{\p s} \Psi_0(s)\right|^2 + \frac{W(\Psi_0(s))}{\eps} \right)  ds\right) dq\geq$$
$$\geq \mathcal{H}^n(K)(1-8\eps \Lambda C_K)\left( \int_{-2\eps\Lambda}^{2\eps\Lambda}\eps \left|{\OHet^{\eps}}'\right|^2 + \frac{W({\OHet^{\eps}})}{\eps} \right)=$$
\be
\label{eq:G_0_lowerboundK}
=\mathcal{H}^n(K)(1-8\eps \Lambda C_K)(2\sigma+O({\eps}^2)),
\ee
where we used (\ref{eq:hetenergy}). 

From (\ref{eq:G_t_energy}) and (\ref{eq:G_0_lowerboundK}) we obtain that, for some suitably small choice of $\eps_2\leq \eps_1$, for all $\eps<\eps_2$ the following holds for the energy of $G_0^{\eps}$ (and thus also of $g_t$) in $N\setminus F(K\times [0,c_K])$:
\be
\label{eq:G_taltracoarea}
\frac{1}{2\sigma}\int_{N\setminus F(K\times [0,c_K])} \eps\frac{|\nabla G_0^{\eps}|^2}{2} + \frac{W(G_0^{\eps})}{\eps} \leq \mathcal{H}^n(\tilde{M}\setminus K) + O(\eps|\log\eps|).
\ee
We now pass to an estimate for the energy of $g_t$ in $F(K\times [0,c_K])$. For this we will use Fermi coordinates for a tubular neighbourhood of $K_{2\eps\Lambda, t, \tilde{\phi}}$ of semi-width $2\eps\Lambda$. Denote by $(y,a) \in K_{2\eps\Lambda, t, \tilde{\phi}} \times (-2\eps\Lambda,2\eps\Lambda)$ such coordinates and by $\Pi_{2\eps\Lambda,t}$ the nearest point projection from the chosen tubular neighbourhood onto $K_{2\eps\Lambda, t, \tilde{\phi}}$ (see Remark \ref{oss:Pi_ct}). Remark that $g_t=-1$ on $F(\tilde{B}\times [0,c_K))$ so there is no energy contribution in this open set.
The coarea formula (for the function $\Pi_{2\eps\Lambda,t}$) then gives:

$$\int_{F((K\setminus \tilde{B})\times [0,c_K])} \eps\frac{|\nabla g_t|^2}{2} + \frac{W(g_t)}{\eps} =$$
$$=\int_{K_{2\eps\Lambda, t, \tilde{\phi}}\setminus F(\tilde{D}\times [0,c_K))} \left( \int_{-2\eps\Lambda}^{2\eps\Lambda}\frac{1}{|J\Pi_{2\eps\Lambda,t}|}\left( \eps \left|{\OHet^{\eps}}^\prime(a)\right|^2 + \frac{W(\OHet^{\eps}(a))}{\eps} \right)  da\right) dy + $$ 
$$+\int_{K_{2\eps\Lambda, t, \tilde{\phi}}\cap F((\tilde{D}\setminus\tilde{B})\times [0,c_K))} \left( \int_{-2\eps\Lambda}^{2\eps\Lambda(1-2\chi(\Pi_K((y,a))))}\frac{1}{|J\Pi_{2\eps\Lambda,t}|}\left( \eps \left|{\OHet^{\eps}}^\prime(a)\right|^2 + \frac{W(\OHet^{\eps}(a))}{\eps} \right)  da\right) dy\,\leq $$
$$\leq \int_{K_{2\eps\Lambda, t, \tilde{\phi}}\setminus F(\tilde{B}\times [0,c_K))} \left( \int_{-2\eps\Lambda}^{2\eps\Lambda}\frac{1}{|J\Pi_{2\eps\Lambda,t}|}\left( \eps \left|{\OHet^{\eps}}^\prime(a)\right|^2 + \frac{W(\OHet^{\eps}(a))}{\eps} \right)  da\right) dy \,\,\underbrace{\leq}_{(\ref{eq:JacPi2})}$$
$$\leq (1+2\eps \Lambda C_{K,t_0,c_0,\tilde{\phi}}) \mathcal{H}^n (K_{2\eps\Lambda, t, \tilde{\phi}}\setminus F(\tilde{B}\times [0,c_K))) \left(\int_{-2\eps\Lambda}^{2\eps\Lambda} \eps \left|{\OHet^{\eps}}^\prime\right|^2 + \frac{W(\OHet^{\eps})}{\eps}\right) =$$
$$(1+2\eps \Lambda C_{K,t_0,c_0,\tilde{\phi}}) \mathcal{H}^n (K_{2\eps\Lambda, t, \tilde{\phi}}\setminus F(\tilde{B}\times [0,c_K)))(2\sigma+O({\eps}^2) ).$$
Therefore for some suitably small choice of $\eps_2\leq \eps_1$, for all $\eps<\eps_2$ the following holds 

$$\frac{1}{2\sigma}\int_{F(K\times [0,c_K])} \eps\frac{|\nabla g_t|^2}{2} + \frac{W(g_t)}{\eps} \leq  \mathcal{H}^n (K_{2\eps\Lambda, t, \tilde{\phi}}\setminus F(\tilde{B}\times [0,c_K))) + O(\eps|\log\eps|).$$
Note that $K_{2\eps\Lambda, t, \tilde{\phi}}\setminus F(\tilde{B}\times [0,c_K))$ is the image of $K_B$ via the immersion in (i) of Lemma \ref{lem:unstable_addcnst_region_Ricci7} when $c=2\eps\Lambda$. Using Lemma \ref{lem:unstable_addcnst_region_Ricci7} in the last estimate and putting it together with (\ref{eq:G_taltracoarea}) we finally obtain that, for some suitably small choice of $\eps_2\leq \eps_1$, for all $\eps<\eps_2$ the following estimate holds for all $t\in [0,t_0]$:

\be
\label{eq:energy_gt_Ricci7}
\Ece(g_t)\leq 2\left(\mathcal{H}^n(M) -\frac{3}{4}\mathcal{H}^n(B)\right) + O(\eps|\log\eps|).
\ee

\medskip

\textit{Definition of $g_{t_0+r}$: ``closing the hole at $B$''.} We have constructed a continuous path $t\in [0,t_0]\to g_t\in W^{1,2}(N)$ with $g_0=f$ and with $\Ece$ uniformly controlled by (\ref{eq:energy_gt_Ricci7}). The next portion of the path will start from $g_{t_0}$ and will ``close the hole at $B$''. On the geometric side, we are starting at the immersion in Lemma \ref{lem:unstable_addcnst_region_Ricci7} (i) with $c=2\eps\Lambda$ and $t=t_0$, and ending at the immersion in Lemma \ref{lem:unstable_addcnst_region_Ricci7} (ii) with $c=2\eps\Lambda$ and $t=t_0$.
We define for $r \in [0,1]$

\be
\label{eq:gtr_path3}
g_{t_0+r}(x)=\left\{\begin{array}{ccc}
G_0^{\eps}(x)\,\,\,\,\,\,\, (\text{see (\ref{eq:Gt})})& \text{ for } x\in  N\setminus F(K\times [0,c_K))\\
\OHet^{\eps}_{4\eps\Lambda(1-r)\chi_0(\Pi_K(x))}(-\text{dist}_{K_{2\eps\Lambda,t, \tilde{\phi}}}(x)) & \text{ for }  x\in F(K \times (0,c_K))  \cup D  \\
1 & \text{ for }  x\in F((K\setminus \overline{D})\times \{0\})     \\
             \end{array}\right..
\ee
Note that $g_{t_0+r}=g_{t_0}$ when $r=0$ (justifying the notation). Moreover $g_{t_0+r}(x)=g_{t_0}(x)$ for $r\in [0,1]$ and $x\in F(\text{supp}(\chi) \times [0,c_K))$. In other words, we are only changing the values of $g_{t_0}$ in the set $F\left(\tilde{D}\times [0,c_K)\right)$ (equivalently, introducing Fermi coordinates centred at $D$, the set $D \times (-c_K,c_K)$).

The fact that $g_{t_0+r}\in W^{1,\infty}(N)$ for every $r\in [0,1]$ follows by repeating the arguments used for $g_t$, where the only part that has to be altered is the local expression of $g_{t_0+r}$ around points of $D$. Using Fermi coordinates $(y,a)$ with $y\in D$, $a\in (-c_K,c_K)$ we get $g_{t_0+r}(F(y,a))=\OHet^{\eps}_{4\eps\Lambda(1-r)\chi(y)}(-|a|+2\eps\Lambda)$, which is Lipschitz. Notice that this is the domain in $N$ where we are ``closing the hole'': when $r=1$ the expression just obtained becomes $g_{t_0+1}(F(y,a))=\OHet^{\eps}_{0}(-|a|+2\eps\Lambda)=\Psi_0(a)$ and so

\be
\label{eq:gt1_path3}
g_{t_0+1}(x)=\left\{\begin{array}{ccc}
G_0^{\eps}(x)\,\,\,\,\,\,\, (\text{see (\ref{eq:Gt})})& \text{ for } x\in  N\setminus F(K\times [0,c_K))\\
\OHet^{\eps}(-\text{dist}_{K_{2\eps\Lambda,t, \tilde{\phi}}}(x)) & \text{ for }  x\in F(K \times (0,c_K))\\
1 & \text{ for }  x\in F(K \times \{0\})     \\
             \end{array}\right..
\ee
Note also that $r\in [0,1]\to g_{t_0+r}\in W^{1,2}(N)$ is a continuous path (with a proof as the ones used earlier for $g_t$ and $f_r$).

\medskip

\textit{Energy of $g_{t_0+r}$.} We use the coarea formula as we did to reach (\ref{eq:energy_gt_Ricci7}). We get

$$\int_{F(K\times [0,c_K])} \eps\frac{|\nabla g_{t_0+r}|^2}{2} + \frac{W(g_{t_0+r})}{\eps} =$$
$$=\int_{K_{2\eps\Lambda, t_0, \tilde{\phi}}} \left( \int_{-2\eps\Lambda}^{2\eps\Lambda(1-2(1-r)\chi(\Pi_K(y,a)))}\frac{1}{|J\Pi_{2\eps\Lambda,t_0}|}\left( \eps \left|{\OHet^{\eps}}^\prime(a)\right|^2 + \frac{W(\OHet^{\eps}(a))}{\eps} \right)  da\right) dy\,\leq $$
$$\leq \int_{K_{2\eps\Lambda, t_0, \tilde{\phi}}} \left( \int_{-2\eps\Lambda}^{2\eps\Lambda}\frac{1}{|J\Pi_{2\eps\Lambda,t_0}|}\left( \eps \left|{\OHet^{\eps}}^\prime(a)\right|^2 + \frac{W(\OHet^{\eps}(a))}{\eps} \right)  da\right) dy \,\,\underbrace{\leq}_{(\ref{eq:JacPi2})}$$
$$\leq (1+2\eps \Lambda C_{K,t_0,c_0,\tilde{\phi}}) \mathcal{H}^n (K_{2\eps\Lambda, t_0, \tilde{\phi}}) \left(\int_{-2\eps\Lambda}^{2\eps\Lambda} \eps \left|{\OHet^{\eps}}^\prime\right|^2 + \frac{W(\OHet^{\eps})}{\eps}\right) =$$
$$=(1+2\eps \Lambda C_{K,t_0,c_0,\tilde{\phi}}) \mathcal{H}^n (K_{2\eps\Lambda, t_0, \tilde{\phi}})(2\sigma+O({\eps}^2) ).$$
Therefore for some suitably small choice of $\eps_2\leq \eps_1$, for all $\eps<\eps_2$ the following holds 

$$\frac{1}{2\sigma}\int_{F(K\times [0,c_K])} \eps\frac{|\nabla g_{t_0+r}|^2}{2} + \frac{W(g_{t_0+r})}{\eps} \leq  \mathcal{H}^n (K_{2\eps\Lambda, t_0, \tilde{\phi}}) + O(\eps|\log\eps|).$$
Note that $K_{2\eps\Lambda, t_0, \tilde{\phi}}$ is the image of $K$ via the immersion in (ii) of Lemma \ref{lem:unstable_addcnst_region_Ricci7} when $c=2\eps\Lambda$. Using Lemma \ref{lem:unstable_addcnst_region_Ricci7} in the last estimate and putting it together with (\ref{eq:G_taltracoarea}) we finally obtain that, for some suitably small choice of $\eps_2\leq \eps_1$, for all $\eps<\eps_2$ the following estimate holds for all $r\in [0,1]$:

\be
\label{eq:energy_gtr_Ricci7}
\Ece(g_t)\leq 2 \mathcal{H}^n(M) -\tau + O(\eps|\log\eps|).
\ee

\subsection{Connect to $+1$}
\label{reach_1}

To conclude the construction of our path, we will connect $g_{t_0+1}$ to the constant $+1$ by means of a (negative) gradient flow. To this end, we will produce a suitable barrier $m$, constructed from $G_0^{\eps}$. First we check that
$$G_0^{\eps}\leq g_{t_0+1} \,\,\,\text{ on } N.$$ 
To see this, recall that $G_0^{\eps}= g_{t_0+1}$ on $N\setminus F(K\times [0,c_K))$, so we only need to compare the two functions on $F(K\times [0,c_K))$. 
On this domain we use coordinates $(q,s)\in K\times [0,c_K)$. Use the following temporary notation: $H(x)=\OHet^{\eps}(-x)$, $T= \{(q,s):q\in K, s=2\eps\Lambda+t_0\tilde{\phi}(q)\}$ and $d(q,s)=\text{dist}_{K_{2\eps\Lambda, t_0, \tilde{\phi}}}(F(q,s))$. Equivalently, the latter signed distance is $\text{sgn}_{(q,s)}\text{dist}((q,s), T)$, where $\text{dist}$ is the Riemannian distance (induced from $N$) and $\text{sgn}_{(q,s)}=-1$ on $\{(q,s):q\in K, 0<s<2\eps\Lambda+t_0\tilde{\phi}(q)\}$ and $\text{sgn}_{(q,s)}=+1$ on $\{(q,s):q\in K, 2\eps\Lambda+t_0\tilde{\phi}(q)\leq s<c_K\}$. 
Then $G_0^{\eps}(q,s) = H(s-2\eps\Lambda)$ and $g_{t_0+1}(q,s)=H(d(q,s))$. If $\text{sgn}_{(q,s)}=-1$ then the Riemannian distance to $T$ is $\geq 2\eps\Lambda -s$, because $T$ lies above $\{s=2\eps\Lambda\}$. Similarly, if $\text{sgn}_{(q,s)}=+1$ then the Riemannian distance to $T$ is $\leq s-2\eps\Lambda$. Therefore in either case we have $d(q,s)\leq s-2\eps\Lambda$. This implies (since $H$ is decreasing) that $G_0^{\eps}(q,s)\leq  g_{t_0+1}(q,s)$.

\medskip

We are going to work with the ``modified'' Allen--Cahn energy 
$$\ca{F}{\eps,\mu_{\eps}}(u)=\Ece(u) - \frac{\mu_{\eps}}{2\s} \int_N u\, d\mathcal{H}^{n+1},$$
where $\mu_{\eps}>0$ tends to $0$ as $\eps\to 0$. The role of $\mu_{\eps}$ is that of a forcing term, to ensure that the flow ``moves in the desired direction'' and is moreover ``mean-convex''. There is flexibility on the choice of $\mu_{\eps}$; we fix the following (note that in Section \ref{level_sets} we only required $\mu_{\eps}>|O(\eps^2)|$ in order to obtain (\ref{eq:first_var_Gt})):
\be
\label{eq:choice_of_mu}
\mu_{\eps}=\eps|\log\eps|.
\ee

We are now ready to contruct the barrier. 
\begin{lem}
\label{lem:barrier_m} 
For all sufficiently small $\eps$ there exists a smooth function $m:N\to \R$ ($m=m^{\eps}$) such that $m<g_{t_0+1}$ and $-(2\s)\ca{F'}{\eps,\mu_{\eps}}(m)=\eps\Delta m -\frac{W'(m)}{\eps}+\mu_{\eps}>0$.
\end{lem}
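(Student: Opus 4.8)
The plan is to obtain $m$ by mollifying the Lipschitz function $G_0^{\eps}$ in a way that preserves both the strict inequality $m < g_{t_0+1}$ and the strict sign of the modified Allen--Cahn first variation. First I would recall from Section \ref{level_sets} that $G_0^{\eps}\in W^{1,\infty}(N)$, that $\nabla G_0^{\eps}\in BV(N)$, and crucially that the distributional first variation satisfies $-(2\s)\ca{F'}{\eps,\mu_{\eps}}(G_0^{\eps})=\eps\Delta G_0^{\eps}-\frac{W'(G_0^{\eps})}{\eps}+\mu_{\eps}\geq \tfrac12\frac{\mu_{\eps}}{2\s}\mathcal{H}^{n+1}$ as Radon measures, once we fix $\mu_{\eps}=\eps|\log\eps|$ as in (\ref{eq:choice_of_mu}) (so that $\mu_{\eps}>2\|O(\eps^2)\|_{L^\infty}$ for $\eps$ small). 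Thus $G_0^{\eps}$ is already a barrier in the measure-theoretic sense; the whole content of the lemma is upgrading it to a genuinely smooth strict supersolution. For this I would invoke the smoothing procedure described in Appendix \ref{mollifiers}: convolve $G_0^{\eps}$ against a suitable (geodesic) mollifier at a small scale $\delta=\delta(\eps)$, producing a smooth $m_\delta$. Because the inequality $\eps\Delta G_0^{\eps}-\frac{W'(G_0^{\eps})}{\eps}+\mu_{\eps}\geq \tfrac12\frac{\mu_{\eps}}{2\s}\mathcal{H}^{n+1}>0$ is a convolution-stable condition (mollifying a supersolution of a linear-in-the-top-order operator keeps it a supersolution, modulo the lower-order term $W'(u)/\eps$ which I control since $W'$ is $C^1$ and $G_0^{\eps}$ is uniformly bounded, so $W'(m_\delta)\to W'(G_0^{\eps})$ uniformly as $\delta\to0$), for $\delta$ sufficiently small one gets $\eps\Delta m_\delta-\frac{W'(m_\delta)}{\eps}+\mu_{\eps}\geq \tfrac14\frac{\mu_{\eps}}{2\s}>0$ pointwise; here I use that $\tfrac12\frac{\mu_{\eps}}{2\s}$ is a fixed positive number (for fixed $\eps$) with room to spare.

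Next I would arrange the strict inequality $m<g_{t_0+1}$. We established just before the lemma that $G_0^{\eps}\leq g_{t_0+1}$ on $N$; the inequality is strict on the open set $F(K\times[0,c_K))\setminus\{G_0^{\eps}=+1\}$ wherever the signed distance $d(q,s)$ is strictly below $s-2\eps\Lambda$, but it may be an equality where both functions equal $+1$ (e.g.\ away from $T_\om$ both are $-1$ — there equality holds). So mollification alone does not give strictness everywhere. The fix is the standard one: after mollifying, subtract a small constant, i.e.\ take $m = m_\delta - \theta$ for a small $\theta=\theta(\eps)>0$. Subtracting a positive constant strictly decreases the function, hence $m<m_\delta\leq g_{t_0+1}$ once $\theta>0$ and $m_\delta\to G_0^{\eps}\le g_{t_0+1}$ uniformly (choose $\delta$ first so $m_\delta\le g_{t_0+1}+\theta/2$, then $m=m_\delta-\theta<g_{t_0+1}$). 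One must check that subtracting $\theta$ does not destroy the strict supersolution property: $\eps\Delta(m_\delta-\theta)-\frac{W'(m_\delta-\theta)}{\eps}+\mu_{\eps}=\eps\Delta m_\delta-\frac{W'(m_\delta-\theta)}{\eps}+\mu_{\eps}$, and since $|W'(m_\delta-\theta)-W'(m_\delta)|\leq \|W''\|_{L^\infty(\text{bounded range})}\,\theta$, choosing $\theta$ small (depending on $\eps$, i.e.\ $\theta\ll \eps\mu_{\eps}$) keeps the right-hand side $\geq \tfrac18\frac{\mu_{\eps}}{2\s}>0$. This gives the desired smooth $m$ with $-(2\s)\ca{F'}{\eps,\mu_{\eps}}(m)=\eps\Delta m-\frac{W'(m)}{\eps}+\mu_{\eps}>0$ pointwise on $N$ and $m<g_{t_0+1}$.

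I expect the main obstacle to be the mollification step at the singular set: near $\overline{M}\setminus M$ the function $\dm$ (and hence $G_0^{\eps}$) is only Lipschitz with $\Delta\dm$ a genuine measure (with a negative jump part on $\overline{S_{\dm}}$, by Lemma \ref{lem:jump_part}, and negative a.c.\ part by Lemma \ref{lem:negative_Laplacian_1}), and one must verify that geodesic convolution on a Riemannian manifold preserves the sign of a distributional inequality $\eps\Delta u - \tfrac{W'(u)}{\eps}+\mu_\eps \ge c>0$ up to an error that vanishes as the mollification scale $\delta\to0$ — the curvature of $N$ introduces commutator terms between convolution and the Laplace--Beltrami operator of order $O(\delta)$, which are harmless for $\eps$ fixed but must be tracked. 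This is precisely what Appendix \ref{mollifiers} is set up to handle, so in the body of the proof I would cite it for the key estimate and then carry out only the elementary bookkeeping: choose $\delta$ small enough that the convolution error plus the $W'$-continuity error is $<\tfrac14\frac{\mu_\eps}{2\s}$, then choose $\theta$ small enough to restore strictness of $m<g_{t_0+1}$ without spoiling the supersolution inequality. The dependence of $\delta,\theta$ on $\eps$ is allowed to be arbitrarily bad since the lemma is only asserted "for all sufficiently small $\eps$" with no uniformity required in the construction of $m$ itself.
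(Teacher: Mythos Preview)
Your approach---mollify $G_0^\eps$ and subtract a small constant---is essentially the paper's, with only the order of the two operations reversed (the paper subtracts $\rho_0$ first, then mollifies). That reversal is immaterial.

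There is, however, one genuine inaccuracy in your bookkeeping. You assert that the commutator between geodesic convolution and the Laplace--Beltrami operator is $O(\delta)$ and hence can be made small by choosing $\delta$ small at fixed $\eps$. This is not what Appendix~\ref{mollifiers} gives you: Lemma~\ref{lem:second_derivative} bounds $\|\Delta(f\star\eta_\delta)-(\Delta f)\star\eta_\delta\|_{L^\infty}$ by $C_N\|f\|_{L^\infty}$, a constant \emph{independent of $\delta$}. (Heuristically, second derivatives of the metric times second derivatives of $\eta_\delta$ integrated over a $\delta$-ball gives $O(1)$, not $O(\delta)$; the first-derivative commutator in the proof of Lemma~\ref{lem:convolution_W12} is $O(\delta)$, but you lose that factor at second order.) So after mollification the term $\eps\Delta m_\delta$ differs from $\eps(\Delta G_0^\eps)\star\eta_\delta$ by at most $2\eps C_N$, and this error does \emph{not} vanish as $\delta\to 0$.

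The fix is the one the paper uses: absorb this $O(\eps)$ error into $\mu_\eps=\eps|\log\eps|$ by taking $\eps$ small enough that $2\eps(2\s)C_N<\mu_\eps/20$, i.e.\ $|\log\eps|\gg C_N$. So the smallness of $\eps$ (not of $\delta$) handles the Laplacian commutator, while the smallness of $\delta$ handles only the $W'$-continuity error (your estimate $|W'(m_\delta)-W'(G_0^\eps)\star\eta_\delta|\to 0$ as $\delta\to 0$ is correct and is exactly (\ref{eq:exchange_conv_order_W})). Once you correct this single attribution of which parameter absorbs which error, your argument goes through and matches the paper's.
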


\begin{proof}
From (\ref{eq:first_var_Gt}) we have for all sufficiently small $\eps$ that

$$-(2\s)\ca{F'}{\eps,\mu_{\eps}}(G^{\eps}_0)\geq \frac{\mu_{\eps}}{2} \mathcal{H}^{n+1},$$
where $\mu_{\eps}>0$ is defined in (\ref{eq:choice_of_mu}). Recall that this inequality means that (the positive Radon measure) $-(2\s)\ca{F'}{\eps,\mu_{\eps}}(G^{\eps}_0)$ minus $\frac{\mu_{\eps}}{2} \mathcal{H}^{n+1}$ is a positive measure. 

For $\rho>0$ consider the function $G^{\eps}_0-\rho$. Then $\Delta(G^{\eps}_0-\rho)=\Delta G^{\eps}_0$ and $W'(G^{\eps}_0-\rho)$ converges uniformly on $N$ to $W(G^{\eps}_0)$ as $\rho\to 0$. Therefore we can find a sufficiently small $\rho_0>0$ (depending on $\eps$, in fact we may choose $\rho_0\approx \eps^2$) such that for all sufficiently small $\eps$ we have
\be
\label{eq:g-rho}
-(2\s)\ca{F'}{\eps,\mu_{\eps}}(G^{\eps}_0-\rho_0)\geq \frac{\mu_{\eps}}{3}\mathcal{H}^{n+1}.
\ee
Let $C_N$ be the constant in Lemma \ref{lem:second_derivative}. We are going to work with $\eps$ sufficiently small to ensure (in addition to the previous conditions identified so far in this proof) that $2\eps (2\s) C_N <\mu_{\eps}/20$. From now we work at fixed $\eps$ (satisfying the smallness conditions just imposed).

Let $\eta_{\delta}$ be the mollifiers defined in Appendix \ref{mollifiers} for $\delta<\delta_0$, where $\delta_0>0$ depends only on the geometry of $N$. Then the (smooth) function $-(2\s)\ca{F'}{\eps,\mu_{\eps}}(G^{\eps}_0-\rho_0) \star \eta_{\delta}$ defined in (\ref{eq:def_mollify_Radon}) is positive for all $\delta$, more precisely for all sufficiently small $\delta$ (one needs $\frac{\mu_{\eps}}{12}>O(\delta^2)$, where $O(\delta^2)$ appears in (\ref{eq:int_eta}))
\be
\label{eq:wrong_order_conv}
\left(-(2\s)\ca{F'}{\eps,\mu_{\eps}}(G^{\eps}_0-\rho_0)\right) \star \eta_{\delta} \geq \frac{\mu_{\eps}}{4}.
\ee
This follows from (\ref{eq:g-rho}) and (\ref{eq:int_eta}), (\ref{eq:def_mollify_Radon}). 
We now mollify $(G^{\eps}_0-\rho_0)$ as in (\ref{eq:def_mollify_function}). We have $|G^{\eps}_0-\rho_0|<2$, since $|G^{\eps}_0|\leq 1$. From Lemma \ref{lem:convolution_W12} (a) we obtain that the functions $(G^{\eps}_0-\rho_0) \star \eta_{\delta}$ converge uniformly on $N$ to $(G^{\eps}_0-\rho_0)$ as $\delta\to 0$.
Therefore (for $\delta$ sufficiently small $-2< (G^{\eps}_0-\rho_0)\star\eta_{\delta}< 2$ since the same bound holds for $G^{\eps}_0-\rho_0$)
$$\left\|W'((G^{\eps}_0-\rho_0) \star \eta_{\delta}) - W'(G^{\eps}_0-\rho_0)\right\|_{C^0(N)}\leq$$ $$\leq \|W''\|_{C^0([-2,2])} \|(G^{\eps}_0-\rho_0) \star \eta_{\delta} - (G^{\eps}_0-\rho_0)\|_{C^0(N)}\to 0$$
as $\delta\to 0$. The function $W'(G^{\eps}_0-\rho_0)$ belongs to $W^{1,\infty}(N)$, therefore by Lemma \ref{lem:convolution_W12} (b) we get $\left\|W'(G^{\eps}_0-\rho_0) \star \eta_{\delta} - W'(G^{\eps}_0-\rho_0)\right\|_{C^0(N)}\to 0$. By the triangle inequality we therefore have
\be
\label{eq:exchange_conv_order_W}
\left\|W'((G^{\eps}_0-\rho_0) \star \eta_{\delta}) - W'(G^{\eps}_0-\rho_0)  \star \eta_{\delta}\right\|_{C^0(N)}\to 0
\ee
as $\delta\to 0$. By Lemma \ref{lem:second_derivative} there exists $C_N$ (depending only on the geometry of $N$) such that for all $\delta<\delta_0$ we have $\|\Delta((G^{\eps}_0-\rho_0) \star \eta_{\delta}) - \Delta(G^{\eps}_0-\rho_0) \star \eta_{\delta}\|_{L^\infty(N)} \leq C_N \|G^{\eps}_0-\rho_0\|_{L^\infty(N)} \leq 2 C_N$. Therefore the modulus of the difference of the following two (smooth) functions

$$\eps\Delta((G^{\eps}_0-\rho_0) \star \eta_{\delta}) -\frac{W'(G^{\eps}_0-\rho_0)  \star \eta_{\delta})}{\eps}+\mu_{\eps} \,\,\text{ and } \,\,\left(-(2\s)\ca{F'}{\eps,\mu_{\eps}}(G^{\eps}_0-\rho_0)\right) \star \eta_{\delta}$$
is at most $2\eps C_N + O_{\delta}(1)$, where the infinitesimal of $\delta$ is given by the norm in (\ref{eq:exchange_conv_order_W}) plus $O(\delta^2) \mu_{\eps} $.
Recall (\ref{eq:wrong_order_conv}) and the smallness condition imposed on $\eps$. Then for sufficiently small $\delta$, writing $m=(G^{\eps}_0-\rho_0) \star \eta_{\delta}$, we have  

\be
\label{eq:first_var_m}
\eps \Delta m - \frac{W'(m)}{\eps}+\mu_{\eps}\geq \frac{\mu_{\eps}}{5}.
\ee
Finally note that for sufficiently small $\delta$ we also have $m<g_{t_0+1}$, since $G_0^{\eps}-\rho_0<g_{t_0+1}$ and $(G^{\eps}_0-\rho_0) \star \eta_{\delta}$ converges uniformly to $G^{\eps}_0-\rho_0$ as $\delta\to 0$ (Lemma \ref{lem:convolution_W12}).
\end{proof}

\begin{oss}(choice of $\eps_2$, again)
We will assume that Lemma \ref{lem:barrier_m} is valid for all $\eps< \eps_2$, where once again we change the choice of $\eps_2$ if necessary.
\end{oss}

\textit{Flow from $m$.} We consider now the negative gradient flow of $(2\s)\ca{F}{\eps,\mu_{\eps}}$, with initial condition given by the smooth function $m$, i.e.~the solution $m_t$ to the PDE

\be
\label{eq:F_flow}
\left\{\begin{array}{ccc}
\eps \frac{\p}{\p t} m_t  = \eps \Delta m_t - \frac{W'(m_t)}{\eps}+\mu_{\eps}\\
m_0 = m
       \end{array}\right.,
\ee
where $\Delta$ is the Laplace--Beltrami operator on $N$. This semilinear parabolic problem has a solution for $t\in [0,\infty)$ and $m_t \in C^\infty(N)$ for all $t>0$, as we will now sketch. 

Short-time existence and uniqueness for a weak solution in $W^{1,2}(N)$ are valid by standard semilinear parabolic theory (rewrite the problem as an integral equation, then use a fixed point theorem). To see why we get global existence in our case, integrate (\ref{eq:F_flow}) on any interval $[0,T]$ on which the weak solution is defined: we get
\begin{equation}
 \label{eq:integrate_flow_m}
 \eps\int_0^T \left(\int_N \left|\frac{\p}{\p t} m_t\right|^2\right) dt +\frac{\eps}{2} \int_N \left|\nabla m_T\right|^2= 
\end{equation}
$$= \frac{\eps}{2} \int_N \left|\nabla m_0\right|^2 - \frac{1}{\eps} \int_N \left(W(m_T)-\eps\mu_{\eps} m_T\right)+ \frac{1}{\eps} \int_N \left(W(m_0)-\eps\mu_{\eps} m_0\right).$$
With our choice of $W$ that is quadratic on $(\pm 2,\pm \infty)$ we can ensure that $\frac{W(u)}{\eps}-\mu_{\eps}u$ is bounded below. Then (\ref{eq:integrate_flow_m}) gives a priori bounds $\int_N \left|\nabla m_t\right|^2\leq C_{m_0,\eps, W}$ independently of $t\in[0,T]$. Again from (\ref{eq:integrate_flow_m}), moving the term $\frac{1}{\eps} \int_N \left(W(m_t)-\eps\mu_{\eps} m_t\right)$ to the left-hand-side and recalling $|u|^2\leq C_{W,\eps} \max\{2,\frac{W(u)}{\eps}-\mu_{\eps}u\}$, we also get an a priori $L^2$-bound on $m_t$. In conclusion 
\begin{equation}
\label{eq:apriori_bounds_parabolic}
 \|m_t\|_{W^{1,2}(N)}\leq C, \,\,\,\,  \int_0^T \left(\int_N \left|\frac{\p}{\p t} m_t\right|^2\right) \leq C,
\end{equation}
with $C$ independent of $t$. This first bound in (\ref{eq:apriori_bounds_parabolic}) provides the assumption under which short-time existence can be iterated to lead global existence for a weak solution to $(\ref{eq:F_flow})$ in $W^{1,2}(N)$. 

Writing the PDE in the form $\frac{\p}{\p t} m_t - \Delta m_t =\frac{1}{\eps^2} (W(m_t)-\eps\mu_{\eps}m_t)$, we treat the right-hand-side as the non-homogeneous term $f$ of a linear PDE. The problem is subcritical thanks to the quadratic growth of $W$: Sobolev embeddings give $u_t\in L^{2^*}(N)$ for all $t$, with $2(n+1)/((n+1)-2)=2^*>2$, so $f:[0,\infty)\to L^p(N)$ with $p>1$. Bootstrapping gives $C^\infty$ regularity\footnote{First we can use regularity theory in $L^p$ for $p>1$ for the linear parabolic PDE $\frac{\p}{\p t} m_t - \Delta m_t =f_t$ combined with Sobolev embeddings, to obtain $m_t\in W^{2,p}(N)$ for all $p<\infty$. Then differentiating the PDE we get $\frac{\p}{\p t} (\nabla m_t) - \Delta (\nabla m_t) =\frac{1}{\eps^2} (W'(m_t)-\eps\mu_{\eps})\nabla m_t$ and we can again treat this as a linear PDE with right-hand-side of the type $[0,\infty)\to L^p(N)$ with $p>1$, therefore $\nabla m_t\in W^{2,p}(N)$ for some $p>1$. Iterating, we get $m_t\in W^{k,p}$ for all $k\in \N$, $p<\infty$, hence smoothness.}.

\begin{lem}[\textit{mean convexity of $m_t$}]
 \label{lem:mean_convex}
The positivity condition $-(2\s)\ca{F'}{\eps,\mu_{\eps}}(m_t)=\eps \Delta m_t - \frac{W'(m_t)}{\eps} + \mu_{\eps}> 0$ holds for all $t\geq 0$. 
\end{lem}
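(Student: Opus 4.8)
The statement to prove is Lemma \ref{lem:mean_convex}: the quantity $v_t := \eps\Delta m_t - \frac{W'(m_t)}{\eps} + \mu_\eps = \eps\,\partial_t m_t$ stays strictly positive for all $t\ge 0$, given that it is positive at $t=0$ by Lemma \ref{lem:barrier_m} (inequality (\ref{eq:first_var_m})). The natural approach is a parabolic maximum principle applied to the evolution equation satisfied by $v_t$ itself. First I would derive the PDE for $w_t := \partial_t m_t$ by differentiating (\ref{eq:F_flow}) in $t$: since $\eps\partial_t m_t = \eps\Delta m_t - \frac{W'(m_t)}{\eps} + \mu_\eps$, differentiating gives
\be
\label{eq:linearized_flow}
\eps\,\partial_t w_t = \eps\Delta w_t - \frac{W''(m_t)}{\eps}\,w_t,
\ee
a linear parabolic equation for $w_t$ with bounded (smooth, by the regularity established above) coefficient $-\frac{W''(m_t)}{\eps}$, which is a zeroth-order term. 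Note $v_t = \eps w_t$, so $v_t$ solves the same linear equation (up to the constant factor), and $v_0 > 0$ on all of $N$ by (\ref{eq:first_var_m}).

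**Key steps.** The core is the strong maximum principle for the linear parabolic operator $L := \eps\partial_t - \eps\Delta + \frac{W''(m_t)}{\eps}$ on the closed manifold $N$ (no boundary, so no boundary conditions to worry about). Because $v_0 > 0$ and $N$ is compact, $\min_N v_0 = 2\delta_0 > 0$ for some $\delta_0 > 0$; in fact (\ref{eq:first_var_m}) gives $v_0 \ge \mu_\eps/5$ pointwise. I would argue as follows: suppose for contradiction that $v$ becomes nonpositive somewhere, and let $t^* = \inf\{t : \min_N v_t \le 0\}$. By continuity $t^* > 0$ and $\min_N v_{t^*} = 0$, attained at some $x^* \in N$. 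At the space-time point $(x^*, t^*)$ we have $v(x^*,t^*) = 0$, $\Delta v(x^*,t^*) \ge 0$ (spatial minimum), and $\partial_t v(x^*,t^*) \le 0$ (first time the minimum reaches $0$). Plugging into (\ref{eq:linearized_flow}) (written for $v$): $\eps\partial_t v = \eps\Delta v - \frac{W''(m)}{\eps} v$; at $(x^*,t^*)$ the right side equals $\eps\Delta v(x^*,t^*) - 0 \ge 0$, while the left side is $\le 0$, so $\partial_t v(x^*,t^*) = 0$ and $\Delta v(x^*,t^*) = 0$. This soft argument gives only non-strictness; to upgrade to strict positivity I would invoke the strong maximum principle / Harnack inequality for linear parabolic equations on compact manifolds (e.g. the standard parabolic Hopf lemma): if $v \ge 0$ on $N \times [0,t^*]$ and $v(x^*, t^*) = 0$ with $t^* > 0$, then $v \equiv 0$ on $N \times [0,t^*]$, contradicting $v_0 \ge \mu_\eps/5 > 0$. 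Hence $v_t > 0$ for all $t \ge 0$.

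**Alternative and main obstacle.** An equivalent and perhaps cleaner route avoids the infimum-of-times argument: apply the maximum principle directly on each finite slab $N \times [0,T]$. The function $v$ satisfies $\eps\partial_t v - \eps\Delta v + \frac{W''(m_t)}{\eps}v = 0$; since $\frac{W''(m_t)}{\eps}$ is bounded (say $|W''(m_t)/\eps| \le \Lambda_\eps$ using the uniform $L^\infty$ bound $|m_t| \le 2$ from the a priori estimates — actually $|m_t|\le 1 + o(1)$, but the quadratic modification of $W$ bounds $W''$ regardless), the function $\tilde v := e^{\Lambda_\eps t/\eps}\, v$ satisfies $\eps\partial_t \tilde v - \eps\Delta \tilde v + (\frac{W''(m_t)}{\eps} + \Lambda_\eps)\tilde v = 0$ with nonnegative zeroth-order coefficient, so the weak minimum principle applies with no sign condition needed, and the strong version then forbids an interior zero unless $\tilde v \equiv 0$. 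Either way the argument is standard once (\ref{eq:linearized_flow}) is in hand. The main (minor) technical point is justifying that $w_t = \partial_t m_t$ is itself a bona fide solution of the linearized equation with enough regularity to run the maximum principle — this follows from the $C^\infty$ regularity of $m_t$ for $t>0$ already established via bootstrapping, together with parabolic regularity at $t=0$ given that $m_0 = m$ is smooth; so strictly there is no serious obstacle, only the bookkeeping of which regularity statement licenses differentiating the flow in $t$. I would state (\ref{eq:linearized_flow}), note $v_0 > 0$ by Lemma \ref{lem:barrier_m}, and conclude by the strong parabolic maximum principle on the closed manifold $N$.
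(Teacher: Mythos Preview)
Your proposal is correct and follows essentially the same route as the paper: differentiate the flow equation in $t$ to obtain the linear parabolic equation $\partial_t F_t = \Delta F_t - \frac{W''(m_t)}{\eps^2} F_t$ for $F_t=\eps\Delta m_t - \frac{W'(m_t)}{\eps}+\mu_\eps$ (your $v_t$), observe that the zero function is also a solution, and conclude by the maximum principle from the initial condition $F_0>0$ of Lemma~\ref{lem:barrier_m}. Your additional details (first-touching argument, exponential weight to handle the sign of the zeroth-order coefficient) flesh out the standard maximum-principle step that the paper invokes in one line.
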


\begin{proof}
For notational convenience, we write for this paragraph $F_t=\eps \Delta m_t - \frac{W'(m_t)}{\eps} + \mu_{\eps}$ (right-hand-side of the first line in (\ref{eq:F_flow})). By the previous discussion, $F_t$ is smooth on $N$ for all $t\in [0,\infty)$. Differentiating $F_t=\eps \Delta m_t - \frac{W'(m_t)}{\eps} + \mu_{\eps}$ (and using $\eps \p_t m_t=F_t$) we get the evolution of $F_t$, given by $ \p_t F_t =   \Delta F_t - \frac{W''(m_t)}{\eps^2} F_t$. So $F_t$ solves $ \p_t \gamma =   \Delta \gamma - \frac{W''(m_t)}{\eps^2} \gamma$, and the constant $\gamma=0$ is also a solution to the same PDE. The condition $F_t>0$ is therefore preserved by the maximum principle, since it holds at $t=0$ by Lemma \ref{lem:barrier_m}.
\end{proof}

Lemma \ref{lem:mean_convex} implies in particular that $m_t:N\to \R$ is increasing in $t$ (since $\p_t m_t = -\frac{2\s}{\eps}\ca{F'}{\eps,\mu_{\eps}}(m_t)>0$), therefore $\lim_{t\to \infty} m_t = m_\infty$ is well-defined pointwise on $N$.
The $W^{1,2}(N)$-norm of $m_t$ is bounded uniformly in $t$ by (\ref{eq:apriori_bounds_parabolic}), therefore $m_t \to m_\infty$ in $W^{1,2}$-weak. Moreover $\|W'(m_t)\|_{W^{1,2}(N)}$ is also uniformly bounded in $t$, since $|\nabla (W'(m_t))| = |W''(m_t)| |\nabla m_t|\leq \|W''\|_{C^0([-2,2])}|\nabla m_t|$ (one can chech that $-2\leq m_t\leq 2$ for all $t$ by the maximum principle). Therefore $W'(m_t)\to W'(m_\infty)$ in $W^{1,2}$-weak. By the second bound in (\ref{eq:apriori_bounds_parabolic}) we have $L^1$-summability in time, on $t\in(0,\infty)$, for $\left\|\frac{\p}{\p t} m_t\right\|_{L^2(N)}$ and therefore there exists $t_j\to \infty$ such that the function $\frac{\p}{\p t} m_t:N\to \R$ has $L^2(N)$-norm that tends to $0$ along the sequence $t_j$.
These facts imply that the weak formulation of the PDE in (\ref{eq:F_flow}) passes to the limit as $t_j\to \infty$ and gives that $m_\infty$ solves $-\ca{F'}{\eps,\mu_{\eps}}=0$ in the weak sense. Standard elliptic theory (or passing parabolic estimates for $m_t$ to the $t\to \infty$ limit) then show that $m_\infty \in C^{\infty}$ solves $-\ca{F'}{\eps,\mu_{\eps}}(m_\infty)=0$ in the strong sense.

\begin{lem}[\textit{stability of $m_\infty$}]
 \label{lem:m_infty_stable}
The limit $m_\infty$ of the flow $m_t$ (as $t\to \infty$) is a stable solution of $\ca{F'}{\eps,\mu_{\eps}}=0$. 
\end{lem}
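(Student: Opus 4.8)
The plan is to obtain stability as a consequence of the monotonicity of the flow together with the fact that $m_\infty$ is reached as an increasing limit, which is a standard argument for limits of gradient flows that approach from one side. First I would recall that, by Lemma \ref{lem:mean_convex}, the function $F_t = \eps\Delta m_t - \frac{W'(m_t)}{\eps} + \mu_{\eps} = \eps\,\p_t m_t$ is strictly positive for all $t \geq 0$, and it solves the linearised equation $\p_t F_t = \Delta F_t - \frac{W''(m_t)}{\eps^2} F_t$. As $t_j \to \infty$, we have $m_{t_j} \to m_\infty$ in $C^\infty$ (from passing parabolic estimates to the limit, as already used to show $m_\infty$ solves $\ca{F'}{\eps,\mu_\eps}(m_\infty)=0$ strongly), and along this sequence $\|\p_t m_{t_j}\|_{L^2(N)} \to 0$, i.e.\ $F_{t_j}/\eps \to 0$ in $L^2(N)$. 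I would like to upgrade this: the key point is to produce a \emph{positive} function $\phi$ on $N$ annihilated by the stability operator $L_{m_\infty} := \Delta - \frac{W''(m_\infty)}{\eps^2}$, or at least satisfying $L_{m_\infty}\phi \geq 0$, since the existence of such a positive supersolution forces the first eigenvalue of $-L_{m_\infty}$ to be $\geq 0$, which is exactly stability of $m_\infty$ for $\ca{F}{\eps,\mu_\eps}$.

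The natural candidate is $\phi = \p_t m_t$ evaluated in a limiting sense. Since $m_t$ is increasing in $t$ and bounded ($-2 \leq m_t \leq 2$), the limit $m_\infty$ is genuinely reached ``from below''. The cleanest route: fix a large time $s$ and set $\psi_s := \p_t m_t|_{t=s} = F_s/\eps > 0$; this solves $\p_t \psi = \Delta\psi - \frac{W''(m_t)}{\eps^2}\psi$ along the flow. Using interior parabolic estimates and the uniform $C^\infty$ bounds on $m_t$ (from \eqref{eq:apriori_bounds_parabolic} and bootstrapping), the family $\{\psi_s\}$ has, along the same subsequence $t_j \to \infty$ realising $m_{t_j}\to m_\infty$, convergence $\psi_{t_j} \to \psi_\infty \geq 0$ in $C^\infty(N)$, with $L_{m_\infty}\psi_\infty = 0$. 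If $\psi_\infty \not\equiv 0$, then by the strong maximum principle (applied to the elliptic equation $L_{m_\infty}\psi_\infty=0$ on the closed manifold $N$) we get $\psi_\infty > 0$ everywhere, hence $0$ is the bottom eigenvalue of $-L_{m_\infty}$ with positive eigenfunction, giving stability. The remaining case is $\psi_\infty \equiv 0$: I would rule this out, or rather show it already gives stability directly, by a different argument—namely, if $m_{t_j}\to m_\infty$ with $\p_t m_{t_j}\to 0$ in $C^1$, then one can test the second variation of $\ca{F}{\eps,\mu_\eps}$ at $m_\infty$ and use that $m_\infty$ is the \emph{monotone} limit of the negative gradient flow, so no direction can strictly decrease the functional (otherwise one could flow below $m_\infty$, contradicting that the flow increases and converges to $m_\infty$). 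Concretely: if $\lambda_1(-L_{m_\infty}) < 0$ with eigenfunction $\varphi$, then $m_\infty - \delta|\varphi|$... (sign care needed) would have lower $\ca{F}{\eps,\mu_\eps}$-energy, and running the flow from a point slightly below $m_\infty$ in the unstable direction would stay below $m_\infty$ by the comparison principle while the flow from $m$ converges up to $m_\infty$; combined with the fact that $m_0 = m < m_\infty$ and $m_t \uparrow m_\infty$, a barrier/ordering argument shows the perturbed flow is trapped strictly below $m_\infty$ yet must also converge to a stationary solution, which by energy considerations forces a contradiction with $m_\infty$ being the limit.

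I expect the main obstacle to be handling the degenerate alternative $\psi_\infty \equiv 0$ cleanly—i.e.\ ensuring that the ``approach from one side'' genuinely transfers to a statement about the linearised operator rather than just about $\p_t m_t$. The robust way around this is the ordering argument: stability of a limit of a gradient flow that converges monotonically is essentially forced, because an unstable direction would allow one to construct (via the flow started from a suitable small perturbation below $m_\infty$) a solution lying strictly between $m$ and $m_\infty$ with \emph{smaller} energy than $m_\infty$, while simultaneously the comparison principle pins the original flow $m_t$ above that perturbed flow and below $m_\infty$; since $m_t \uparrow m_\infty$, the perturbed flow must also converge to something $\leq m_\infty$, and an energy/uniqueness analysis in the ordered interval $[m, m_\infty]$ yields the contradiction. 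So the proof reduces to: (1) recall $F_t > 0$ and its linearised evolution; (2) extract $C^\infty$ subsequential limits to get $L_{m_\infty}\psi_\infty = 0$, $\psi_\infty \geq 0$; (3) in the case $\psi_\infty > 0$, conclude stability via the positive-supersolution criterion and the strong maximum principle; (4) in the degenerate case, run the comparison/ordering argument with a perturbed flow to derive a contradiction with instability. Steps (1)–(3) are routine; step (4) is where the care goes.
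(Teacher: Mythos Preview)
Your primary route through steps (1)--(3) cannot succeed as written: you yourself record that along the subsequence $t_j$ one has $\|\p_t m_{t_j}\|_{L^2(N)}\to 0$, and since parabolic estimates upgrade this to $\psi_{t_j}\to 0$ in $C^\infty$, the limit $\psi_\infty$ is \emph{always} identically zero. So case (3) never occurs and the entire weight falls on step (4), which you acknowledge is where ``the care goes'' but which you leave as a sketch (perturbed flows, trapping, ``energy/uniqueness analysis in the ordered interval'') without carrying it out. One can rescue (1)--(3) by normalising, i.e.\ setting $\tilde\psi_t=\psi_t/\|\psi_t\|_{L^\infty}$ and invoking a parabolic Harnack inequality for the linear equation $\p_t\psi=\Delta\psi-\tfrac{W''(m_t)}{\eps^2}\psi$ on the closed manifold $N$ to get uniform positive lower bounds, so that the limit is a genuine positive solution of $L_{m_\infty}\tilde\psi_\infty=0$; but you did not say this, and without it the argument is incomplete.

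The paper's proof bypasses this entirely with a short barrier argument that is in the spirit of your step (4) but much more direct. Assume $m_\infty$ is unstable, take the first eigenfunction $\rho_1>0$ with eigenvalue $\lambda_1<0$, and observe that for $s\in(0,s_0]$ small the function $m_\infty-s\rho_1$ satisfies $\eps\Delta(m_\infty-s\rho_1)-\tfrac{W'(m_\infty-s\rho_1)}{\eps}+\mu_\eps<0$ (differentiate in $s$ at $s=0$ to get $\lambda_1\rho_1<0$). Choose $s$ small enough that $m_0<m_\infty-s\rho_1$, and let $\tau$ be the first time the increasing flow $m_t$ touches $m_\infty-s\rho_1$ at some point $x$. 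At $x$ one has $\Delta(m_\infty-s\rho_1)\geq \Delta m_\tau$ and $W'(m_\infty-s\rho_1)=W'(m_\tau)$; combined with the preserved mean convexity $\eps\Delta m_\tau-\tfrac{W'(m_\tau)}{\eps}+\mu_\eps>0$ this forces $\eps\Delta(m_\infty-s\rho_1)-\tfrac{W'(m_\infty-s\rho_1)}{\eps}+\mu_\eps>0$ at $x$, a contradiction. This is a static barrier (no auxiliary flow needed) and uses only the maximum principle at a single touching point, so it is considerably cleaner than the perturbed-flow scheme you outline.
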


\begin{proof}
This is a consequence of the ``mean convexity'' of $m_t$ (Lemma \ref{lem:mean_convex}) and of the maximum principle.  We give the explicit argument. Recall from the previous discussion that $m_\infty$ is stationary, i.e.~$\ca{F'}{\eps,\mu_{\eps}}(m_\infty)=0$. Also recall that the second variation at $u:N\to \R$ of the functional $(2\s)\ca{F}{\eps,\mu}$ (for a constant $\mu$) on the test function $\phi$ is given by the quadratic form $Q(\phi,\phi)=\int_N \eps|\nabla \phi|^2 + \frac{W''(u)}{\eps}\phi^2$ (the term involving $\mu$ disappears because it is linear) and the Jacobi operator is given by $-\eps \Delta \phi + \frac{W''(u)}{\eps}\phi$.

Let $\rho_1$ be its first eigenfunction (with respect to the energy $(2\s)\ca{F}{\eps,\mu_{\eps}}$), then $\rho_1$ is (strictly) positive and smooth on $N$. Consider, for $s\in(-\delta,\delta)$ (for some small positive $\delta$), the functions $m_\infty - s\rho_1$. Then their first variation satisfies
$$\frac{\p}{\p s} \left(-(2\s)\ca{F'}{\eps,\mu_{\eps}}(m_\infty -s \rho_1)\right) = -\eps \Delta \rho_1 + \frac{W''(m_\infty-s\rho_1)}{\eps}\rho_1.$$
If $m_{\infty}$ were unstable, then the first eigenfunction would satisfy $-\eps\Delta \rho_1 +\frac{W''(m_\infty)}{\eps}\rho_1=\lambda_1 \rho_1$ for some $\lambda_1<0$ and therefore
$$\left.\frac{\p}{\p s}\right|_{s=0} \left(-(2\s)\ca{F'}{\eps,\mu_{\eps}}(m_\infty -s \rho_1)\right)=\lambda_1\rho_1<0$$
on $N$. Then we could choose $s_0>0$ sufficiently small so that 
\be
\label{eq:one-sided_minimization1}
-(2\s)\ca{F'}{\eps,\mu_{\eps}}(m_\infty -s \rho_1) = \eps \Delta (m_\infty -s \rho_1) - \frac{W'((m_\infty -s \rho_1) )}{\eps}+\mu_{\eps} <0
\ee
on $N$ for $s\in [0,s_0]$. Note that $m_\infty -s \rho_1$ is smooth on $N$. 

Since $-(2\s)\ca{F'}{\eps,\mu_{\eps}}(m_t)>0$, at any $t\in[0,\infty)$ we have $m_t>m_0$, in particular $m_\infty>m_0$. Choose $s$ sufficiently small so that $s<s_0$ and $m_\infty -s \rho_1 > m_0$. Let $\tau>0$ be the first time for which $m_\tau$ has a point $x$ such that $m_\tau(x)= (m_\infty -s \rho_1)(x)$.  Then $m_\infty -s \rho_1- m_\tau$ is a smooth non-negative function on $N$ with a minimum at $x$, so $\Delta (m_\infty -s \rho_1)(x)\geq \Delta m_\tau(x)$. Moreover we have $W'(m_\infty -s \rho_1) = W'(m_\tau)$ at $x$. Recalling that $\eps \Delta m_\tau - \frac{W'(m_\tau)}{\eps}+\mu_{\eps}>0$ on $N$ (preservation of mean convexity) we get $\eps \Delta (m_\infty -s \rho_1) (x) - \frac{W'((m_\infty -s \rho_1) (x))}{\eps}+\mu_{\eps}>0$, contradicting (\ref{eq:one-sided_minimization1}).
\end{proof}

\begin{Prop}
\label{Prop:ACconstant_Ricci}
If $\Rc{N}>0$ then any stable solution to $\ca{F'}{\eps,\mu}=0$ on $N$ must be a constant (here $\mu$ can be any constant.)
\end{Prop}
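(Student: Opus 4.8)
The plan is to adapt the classical argument showing that a closed manifold of positive Ricci curvature carries no stable minimal hypersurface, combining the Bochner formula for $u$ with the stability inequality tested against a regularisation of $|\nabla u|$. Let $u\in W^{1,2}(N)$ be a stable solution of $\ca{F'}{\eps,\mu}=0$. By standard elliptic regularity $u$ is smooth (in fact $u\in C^{2,\alpha}(N)\cap W^{3,2}(N)$ is all that is used below), it satisfies the Euler--Lagrange equation $\eps\Delta u-\frac{W'(u)}{\eps}+\mu=0$, i.e. $\Delta u=\frac{W'(u)}{\eps^2}-\frac{\mu}{\eps}$, and, since the term $-\frac{\mu}{2\s}\int_N u$ is linear, the second variation of $(2\s)\ca{F}{\eps,\mu}$ at $u$ is $Q(\phi,\phi)=\int_N \eps|\nabla\phi|^2+\frac{W''(u)}{\eps}\phi^2$; stability means $Q(\phi,\phi)\ge 0$, equivalently $\int_N|\nabla\phi|^2+\frac{W''(u)}{\eps^2}\phi^2\ge 0$ for every $\phi\in W^{1,2}(N)$.

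First I would record the Bochner identity. Since $\nabla\Delta u=\frac{W''(u)}{\eps^2}\nabla u$,
$$\tfrac12\Delta|\nabla u|^2=|\nabla^2u|^2+\frac{W''(u)}{\eps^2}|\nabla u|^2+\Rc{N}(\nabla u,\nabla u).$$
For $\delta>0$ put $w_\delta:=\sqrt{|\nabla u|^2+\delta^2}$, a strictly positive function lying in $W^{2,2}(N)$. From $w_\delta^2=|\nabla u|^2+\delta^2$ one gets $\Delta w_\delta=\frac{\Delta|\nabla u|^2}{2w_\delta}-\frac{|\nabla|\nabla u|^2|^2}{4w_\delta^3}$, and the Kato-type inequality $|\nabla|\nabla u|^2|^2=4|\nabla^2u(\nabla u)|^2\le 4|\nabla u|^2|\nabla^2u|^2$ together with $|\nabla u|^2\le w_\delta^2$ yields $\frac{|\nabla|\nabla u|^2|^2}{4w_\delta^3}\le\frac{|\nabla^2u|^2}{w_\delta}$. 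Inserting Bochner, the $|\nabla^2u|^2$ terms cancel and we obtain the pointwise (a.e.) differential inequality
$$\Delta w_\delta\ \ge\ \frac{1}{w_\delta}\Big(\frac{W''(u)}{\eps^2}|\nabla u|^2+\Rc{N}(\nabla u,\nabla u)\Big).$$

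Next I would test the stability inequality with $\phi=w_\delta$ and integrate the displayed inequality against $w_\delta$ over the closed manifold $N$. Integration by parts gives $\int_N|\nabla w_\delta|^2=-\int_N w_\delta\Delta w_\delta\le -\frac{1}{\eps^2}\int_N W''(u)|\nabla u|^2-\int_N\Rc{N}(\nabla u,\nabla u)$, so
$$0\ \le\ \int_N|\nabla w_\delta|^2+\frac{W''(u)}{\eps^2}w_\delta^2\ \le\ -\int_N\Rc{N}(\nabla u,\nabla u)+\frac{1}{\eps^2}\int_N W''(u)\,(w_\delta^2-|\nabla u|^2).$$
Since $w_\delta^2-|\nabla u|^2=\delta^2$ and $\int_N W''(u)$ is a fixed finite constant, letting $\delta\to 0$ gives $\int_N\Rc{N}(\nabla u,\nabla u)\le 0$. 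As $N$ is compact with $\Rc{N}>0$ there is $\lambda_0>0$ with $\Rc{N}(X,X)\ge\lambda_0|X|^2$ for all $X$, whence $\lambda_0\int_N|\nabla u|^2\le 0$; therefore $\nabla u\equiv 0$ and $u$ is constant.

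I expect the main obstacle to be purely technical: $|\nabla u|$ is only Lipschitz and may vanish on a bad set where the naive Bochner/Kato manipulation degenerates. This is exactly what the choice $w_\delta=\sqrt{|\nabla u|^2+\delta^2}$ circumvents, since $w_\delta$ is smooth and the Kato inequality used above is valid globally (on $\{\nabla u=0\}$ one has $\nabla|\nabla u|^2=2\nabla^2u(\nabla u)=0$), and the $\delta\to0$ limit is harmless because the error is $O(\delta^2)$. The secondary point is the regularity of $u$: bootstrapping $\Delta u=(W'(u)-\eps\mu)/\eps^2$ gives $u\in C^{2,\alpha}(N)\cap W^{3,2}(N)$, which makes the Bochner identity hold a.e., puts $w_\delta\in W^{2,2}(N)$, and legitimises both the test-function step and the integration by parts.
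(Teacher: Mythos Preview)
Your proof is correct and follows essentially the same route as the paper: combine the Bochner identity for $u$ with the stability inequality tested against $|\nabla u|$ (or a regularisation thereof), so that the Hessian terms cancel via Kato and one is left with $\int_N \Rc{N}(\nabla u,\nabla u)\le 0$, forcing $\nabla u\equiv 0$. The paper phrases the outcome of the Bochner/Kato step in the geometric language of the Allen--Cahn second fundamental form $|A_\eps|^2=|D^2u|^2-|\nabla|\nabla u||^2\ge 0$ and cites \cite{ChoMan}, \cite{Ton} for the computation, while you carry it out explicitly with the smooth approximant $w_\delta=\sqrt{|\nabla u|^2+\delta^2}$; this makes your treatment of the $\{\nabla u=0\}$ set more self-contained, but the underlying argument is identical.
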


\begin{proof}
Let $u$ be a stable solution to $\ca{F'}{\eps,\mu}(u)=0$. We test the stability inequality $Q(\cdot, \cdot)\geq 0$ on a test function of the form $|\nabla u|\phi$ for $\phi\in C^2(N)$. We get (this follows from the expression of $Q$ given above by using Bochner's identity, see \cite{ChoMan}, \cite{Ton})
$$\int_{N\setminus \{|\nabla u|=0\}} \left(|A_{\eps}|^2 + \text{Ric}_N\left(\frac{\nabla u}{|\nabla u|}, \frac{\nabla u}{|\nabla u|}\right) \right) \eps |\nabla u|^2 \phi^2\leq \int_N \eps |\nabla u|^2 | \nabla \phi|^2,$$
where $A_{\eps}$ stands for the Allen--Cahn second fundamental form of $u$ and $|A_{\eps}|^2=|D^2 u|^2-|\nabla|\nabla u||^2\geq 0$. We plug in $\phi=1$ so the positiveness of $\Rc{N}$ gives $\nabla u \equiv 0$.
\end{proof} 

Lemma \ref{lem:m_infty_stable} and Proposition \ref{Prop:ACconstant_Ricci} give that $m_\infty$ is a constant. There exist exactly two constant solutions of $\ca{F'}{\eps,\mu}=0$: for this, the constant $k$ must satisfy $W'(k) = \eps \mu k$ (and therefore $W(k)\approx c_w^2 \eps^2 \mu^2$), so one constant is slighly larger than $-1$ and the other is slighly larger than $+1$ when $\eps$ is sufficiently small (both solutions are trivially checked to be stable). In our case, since $m_\infty>m_0$ and $m_0>1/2$ on an open neighbourhood of $\overline{M}$, we conclude that $m_\infty$ is the constant slightly larger than $+1$, which we will denote by $k_{\mu_{\eps}}$:

\begin{equation}
 \label{eq:m_infty_constant}
m_\infty \equiv k_{\mu_{\eps}}.
\end{equation}

\medskip

\textit{Flow from $g_{t_0+1}$.} We are now ready to consider the negative $(2\s)\ca{F}{\eps,\mu_{\eps}}$-gradient flow $\{h_t\}$ starting at $h_0=g_{t_0+1}$. We first make the initial datum smooth, by considering mollifiers $\eta_\delta$ for $\delta\in(0,\overline{\delta}]$ as in Appendix \ref{mollifiers} and $\overline{\delta}$ sufficiently small to preserve the strict inequality with $m$, i.e.~to ensure $g_{t_0+1}\star \eta_{\delta} >m$  for $\delta\in(0,\overline{\delta}]$. The family 

\be
\label{eq:mollified_path}
\delta\in (0,\overline{\delta}] \to g_{t_0+1}\star \eta_{\delta}\in W^{1,2}(N)
\ee
is continuous in $\delta$ and extends by continuity at $\delta=0$ with value $g_{t_0+1}$ (see Remark \ref{oss:continuity_in_delta}). Continuity is also valid for $\delta\in (0,\overline{\delta}] \to g_{t_0+1}\star \eta_{\delta}\in C^0(N)$. As a consequence, $\Ece (g_{t_0+1}\star \eta_{\delta})$ varies continuously with $\delta$ and therefore, upon choosing $\overline{\delta}$ possibly smaller, we also have, in addition to (\ref{eq:mollified_path}) and to $g_{t_0+1}\star \eta_{\overline{\delta}} >m$, that the following holds for all $\delta\in(0,\overline{\delta}]$,

\be
\label{eq:energy_mollified_path}
\Ece (g_{t_0+1}\star \eta_{\delta})\leq \Ece(g_{t_0+1})+\frac{1}{4}\tau.
\ee

\medskip

We now let $h_0 = g_{t_0+1}\star \eta_{\overline{\delta}}$ be the initial condition for the negative $(2\s)\ca{F}{\eps,\mu_{\eps}}$-gradient flow:
\be
\label{eq:F_flow_h}
\left\{\begin{array}{ccc}
\eps \frac{\p}{\p t} h_t  = \eps \Delta h_t - \frac{W'(h_t)}{\eps}+\mu_{\eps}\\
h_0 = g_{t_0+1}\star \eta_{\overline{\delta}}
       \end{array}\right. .
\ee
By the maximum principle, since $m_0<h_0$, the two flows (\ref{eq:F_flow}) and (\ref{eq:F_flow_h}) preserve $m_t<h_t$ for all $t$.\footnote{We have smoothed the initial data in order to use basic linear parabolic theory to obtain smoothness at all times and thus use the classical maximum principle. The other option is to use $g_{t_0+1}$ as initial condition and prove that it becomes smooth after a short time.}
Since $g_{t_0+1}\leq 1$ by construction, we also have $h_0<k_{\mu_{\eps}}$, therefore $h_t <k_{\mu_{\eps}}$ for all $t>0$ by the maximum principle. On the other hand we saw that $m_t \to k_{\mu_{\eps}}$ as $t\to \infty$, therefore (with smooth convergence, in particular we have continuity in $t$ for $t\in [0,\infty]\to h_t \in W^{1,2}(N)$)

\begin{equation}
\label{eq:flow_to_k}
h_t \to k_{\mu_{\eps}}\,\, \text{ as } \,\, t\to \infty . 
\end{equation}

\textit{Evaluation of $\Ece$ on the path $h_t$}.
Let us estimate the value of $\Ece$ along this path. For this, note that $\ca{F}{\eps,\mu_{\eps}}$ is decreasing along the flow $\{h_t\}$, therefore $\Ece(h_t)\leq \Ece(h) + 2\frac{\mu_{\eps}}{2\s} {\Hc}^{n+1}(N)$ for all $t$ (where we used $h_t<2$ for all $t$). This implies that $\Ece$ is bounded above indepedently of $\eps$; more precisely, recalling that $\Ece(h_0)\leq 2{\Hc}^n(M)-\tau + O(\eps|\log\eps|)$, we can absorbe $\frac{\mu_{\eps}}{\s} {\Hc}^{n+1}(N)$ in the error $O(\eps|\log\eps|)$ for $\eps$ sufficiently small. In other words we obtain, for $\eps_2\leq \eps_1$ sufficiently small, the upper bound 
\be
\label{eq:energy_path_3_Ricci7}
\Ece(h_t)\leq 2{\Hc}^n(M)-\frac{3}{4}\tau+O(\eps|\log\eps|)
\ee
for all $t$ and for all $\eps<\eps_2$.

\medskip

To complete the path, we will now connect $h_\infty=k_{\mu_{\eps}}$ to $+1$ by a negative $(2\s)\Ece$-gradient flow:
\be
\label{eq:F_flow_k}
\left\{\begin{array}{ccc}
\eps \frac{\p}{\p t} k_t  = \eps \Delta k_t - \frac{W'(k_t)}{\eps}+\mu_{\eps}\\
k_0 = k_{\mu_{\eps}}
       \end{array}\right. .
\ee
Since $(2\s)\ca{E'}{\eps}(k_{\mu_{\eps}}) = - \mu_{\eps}<0$ it follows that mean-convexity is preserved and $k_t$ decreases in $t$. Moreover $+1$ is a solution and thus acts as a lower barrier. In conclusion the negative $(2\s)\Ece$-gradient flow starting at $k_{\mu_{\eps}}$ tends to $+1$. (One can check that in fact $k_t$ is a constant for each $t$.)
The energy $\Ece(k_t)$ is decreasing, so the same upper bound that we had in (\ref{eq:energy_path_3_Ricci7}) holds:
\begin{equation}
\label{eq:energy_path_4_Ricci7}
\Ece(k_t)\leq 2{\Hc}^n(M)-\frac{3}{4}\tau+O(\eps|\log\eps|)
\end{equation}
for all $t$ and for all $\eps<\eps_2$.

\section{Conclusion of the proof of Theorems \ref{thm:compare2M_Ricci7}, \ref{thm:mult1_Ricci7} and \ref{thm:two-sided}}
\label{final_argument_Ricci7}

In the previous sections we exhibited (given $M$ as in Theorem \ref{thm:compare2M_Ricci7}, which also fixed $B$ and $\tau$ by Remark \ref{oss:choiceB_Ricci7} and Lemma \ref{lem:unstable_addcnst_region_Ricci7}) for all sufficiently small $\eps$ (namely $\eps<\eps_2$) the following five continuous paths in $W^{1,2}(N)$: (\ref{eq:path-1_Ricci7}) reversed, (\ref{eq:gt_path2}), (\ref{eq:gtr_path3}), (\ref{eq:mollified_path}), (\ref{eq:F_flow_h}). In the order just given, these paths have matching endpoints, therefore their composition in the same order provides a continuous path in $W^{1,2}(N)$ for all $\eps<\eps_2$, that starts at the constant $-1$ and ends at the constant $+1$ and such that
$$\Ece \text{ along this path is} \leq 2{\Hc}^n(M)-\min\left\{\frac{3}{4}\tau, \frac{3{\Hc}^n(B)}{4}\right\} + O(\eps|\log\eps|),$$
thanks to (\ref{eq:energy_path_1_Ricci7}), (\ref{eq:energy_gtr_Ricci7}), (\ref{eq:energy_mollified_path}), (\ref{eq:energy_path_3_Ricci7}), (\ref{eq:energy_path_4_Ricci7}). 
Choosing $\eps_3$ sufficiently small to ensure that $\eps<\eps_3\Rightarrow|O(\eps|\log\eps|)|\leq \min\left\{\frac{\tau}{4}, \frac{\mathcal{H}^n(B)}{4}\right\}$ the above bound gives, for all $\eps<\eps_3$, that the maximum of $\Ece$ on the path is at most $2{\Hc}^n(M)-\min\left\{\frac{\tau}{2}, \frac{{\Hc}^n(B)}{2}\right\}$.

The path is in the admissible class for the minmax construction in \cite{Gua}, therefore the maximum on this specific path controls from above the minmax value $c_{\eps}$ achieved by the index-$1$ solution $u_{\eps}$ obtained from \cite{Gua} (for all $\eps<\eps_3$). Summarising, for every $M\subset N$ as in Theorem \ref{thm:compare2M_Ricci7} there exist ${\eps}_3>0$, $\tau>0$ and $B\subset M$ (non-empty) such that for all $\eps<{\eps}_3$
\be
\label{eq:energy_u_eps}
c_{\eps}=\Ece(u_{\eps})\leq 2{\Hc}^n(M)-\min\left\{\frac{\tau}{2}, \frac{{\Hc}^n(B)}{2}\right\}.
\ee
This concludes the proof of the strict inequality in Theorem \ref{thm:compare2M_Ricci7}. 

\medskip

For Theorem \ref{thm:mult1_Ricci7} it suffices to observe that the integral varifold $V$ produced in \cite{Gua} is (thanks to \cite{Wic}, \cite{TonWic}) such that each connected component of $\text{reg}_V$ (the smoothly embedded part of $\spt{V}$) has the properties needed so that it can be used in place of $M$ in Theorem \ref{thm:compare2M_Ricci7}, or in (\ref{eq:energy_u_eps}) above; moreover, the mass $\|V\|(N)$ of $V$ is $\lim_{{\eps}_i\to 0} c_{{\eps}_i}$ (see Section \ref{recall_Gua}). Letting $M$ be any connected component of $\text{reg}_V$ and denoting by $\theta\in \N$ its (constant) multiplicity, using (\ref{eq:energy_u_eps}) we get $\theta \mathcal{H}^n(M)\leq \|V\|(N)<2\mathcal{H}^n(M)$. This implies $\theta=1$ and the multiplicity assertion in Theorem \ref{thm:mult1_Ricci7} is proved.

\medskip

The fact that the minimal hypersurface is two-sided then follows immediately, since under multiplicity-$1$ convergence (and by the lower energy bounds in \cite{Gua}) we have that $u_{{\eps}_i}\to u_\infty$ in $BV(N)$, where $u_\infty$ is a non-constant function that takes values in $\{-1,+1\}$ and, moreover, $V$ is the multiplicity-$1$ varifold associated to the reduced boundary of the set (of finite perimeter) $\{u_\infty =+1\}$ (there is no ``hidden boundary'' in the limit). We therefore have a global normal on $\text{reg}_V$ (the interior- or the exerior-pointing normal for $\p \{u_\infty =+1\}$). Theorem \ref{thm:two-sided} is therefore proved.

\begin{oss}
 \label{oss:connectedness}
Note that $\text{reg}_{V}$ has to be connected, since each connected\footnote{We point out that connectedness of $\text{reg}_{V}$ and of $\spt{V}$ are in fact the same thing by the varifold maximum principle.} component of it is unstable (because it is two-sided and $\Rc{N}>0$) and therefore the Morse index of $\text{reg}_{V}$ is at least the number of its connected components. On the other hand, by multiplicity-$1$ convergence (or by \cite{Hie}, \cite{Gas}) the Morse index of $\text{reg}_{V}$ is $\leq 1$. An alternative argument for the connectedness, that does not rely on two-sidedness, can be given by means of the maximum principle for stationary varifolds (\cite{Ilm} \cite{Wic}) and the Frankel property\footnote{The proof of the Frankel property can be adapted because we have local stability for $V$ and so the shortest geodesic between two connected components of $\spt{V}$ must have endpoints on the smooth parts (not on the singular set), by the same reasoning used in Lemma \ref{lem:planar_tangent}, see also \cite[Theorem 2.10]{Zhou1}.} for $\Rc{N}>0$ (using the regularity results \cite{TonWic}, \cite{Wic}).
\end{oss}

\appendix
\section{Mollifiers}
\label{mollifiers}

We explain in detail the mollification procedure used in Section \ref{reach_1}. For this appendix, notation is reset. Let $(N,g)$ be a closed Riemannian manifold of dimension $n+1$ and $f:N\to \R$ in $W^{1,\infty}(N)$. We are going to produce, for every $\delta>0$ sufficienly small, a smooth function $f_\delta:N\to \R$ such that $f_\delta \to f$ strongly in $W^{1,2}(N)$ as $\delta\to 0$ (even $W^{1,p}$ for every $p<\infty$, but we will not need this). The function $f_\delta$ is defined as a convolution $f\star \eta_\delta$, for a suitable mollifier $\eta_\delta$. Moreover we will check that, if additionally $\nabla f \in BV(N)$, then we have, for all $\delta$ sufficiently small, that $\Delta \, f_\delta = (\Delta f)\star \eta_\delta + E_\delta$, where $(\Delta f)\star \eta_\delta$ is the convolution of the Radon measure $\Delta f$ with the mollifier $\eta_\delta$ and hence it is identified with its (smooth) density with respect to $\mathcal{H}^{n+1}$, and $E_\delta$ is a smooth function bounded in $L^\infty$ by a constant that only depends on $N$. It would not suffice for our scopes in Section \ref{reach_1} to have a convolution procedure that gives $\Delta \, f_\delta \to \Delta f$ as measures, therefore we give an ad hoc contruction here.

\medskip

We begin with the definitions. The standard smooth mollifier on $\R$ is $\eta(x)=e^{-\frac{1}{1-x^2}}$ for $|x|< 1$, and $\eta(x)=0$ for $|x|\geq 1$. In the following, $\delta<\text{inj}(N)$. We then let $\eta_\delta:N\times N\to \R$ be defined as

$$\eta_\delta(x,y)=\left\{\begin{array}{ccc}
\frac{1}{c_n} \frac{1}{\delta^{n+1}} \eta\left(\frac{d(x,y)}{\delta}\right)& \text{ for } d(x,y)< \delta\\
0 &  \text{ for } d(x,y)\geq \delta
                          \end{array}\right.;$$
here $d$ is the Riemannian distance on $N$ (note that in the first line $y$ is in the geodesic ball centred at $x$ with radius $\delta$) and $c_n=\int_{B_1^{n+1}(0)} \eta(|x|)d\mathcal{L}^{n+1}=(n+1)\om_{n+1}\int_0^1 \eta(s) s^n ds$, where the integration is with respect to the Lebesgue $(n+1)$-dimensional measure. Therefore for every $x$, using normal coordinates centred at $x$, the function $\eta_\delta \circ \exp_x$ integrates to $1$ in the ball of radius $\delta$ in the tangent space to $N$ at $x$, endowed with the Euclidean metric. Moreover, denoting by $B_\delta(x)$ the geodesic ball centred at $x$, there exist $\delta_0< \text{inj}(N)$ and $C_N>0$ such that, for all $x\in N$ and for all $\delta\leq \delta_0$, we have
\begin{equation}
\label{eq:int_eta}
\int_{B_\delta(x)} \eta_\delta(x,y) d\mathcal{H}^{n+1}(y)=1+O(\delta^2),
\end{equation}
where $|O(\delta^2)|\leq C_{N} \delta^2$. The constant $C_N$ depends ony on the curvature of $N$, more precisely on the maximum of the modulus of the sectional curvature (recall that $N$ is compact). 

\textit{Choice of $\delta_0$.} The sectional curvatures of $N$ are bounded in modulus since $N$ is compact. Recalling Riccati's equation and the Bishop-G\"unther inequalities (see the final inequality in the proof of \cite[Theorem 3.17]{Gray}, combined with \cite[(3.23)]{Gray} in the case $P=\{x\}$) there exist $\delta_0< \text{inj}(N)$ and $C_N>0$ such that for all $x\in N$ and for $\delta\leq \delta_0$ we have 
$$|\mathcal{H}^{n}(\p B_\delta(x))-(n+1)\om_{n+1}\delta^n|\leq C_{N} (n+1)\om_{n+1}\delta^{n+2},$$
where $\om_{n+1}$ is the Euclidean volume of the unit ball in $R^{n+1}$. At the same time we can also ensure the following (see \cite[(3.35)]{Gray}, or also \cite[Lemma 12.1]{Gray2}). For all $x\in N$ and for $\delta\leq \delta_0$, denoting by $H_{x,\delta}$ the mean curvature function on the geodesic sphere of radius $\delta$ around the point $x$ (with respect to the outward-pointing normal, hence $H_{x,\delta}\leq 0$) we have ($-\frac{n}{\delta}$ is the Euclidean mean curvature of the sphere of radius $\delta$ in $\R^{n+1}$)
$$\left|H_{x,\delta}+\frac{n}{\delta}\right| \leq C_N \delta \,\,\,\,\, \text{ on } \p B_\delta(x).$$

\textit{Proof of (\ref{eq:int_eta}).} This follows by using the coarea formula in $B_\delta(x)$ for the function $d(x,\cdot)$, for which $|\nabla d(x,\cdot)|=1$. By the choice of $\delta_0$ above we have $C_N>0$ such that for all $x\in N$ and for $s\leq \delta_0$, $|\mathcal{H}^{n}(\p B_s(x))-(n+1)\om_{n+1}s^n|\leq C_{N} (n+1)\om_{n+1}s^{n+2}$. Then using the coarea formula we get $\int_{B_\delta(x)} \eta_\delta(x,y)  d\mathcal{H}^{n+1}(y)=\frac{1}{c_n} \frac{1}{\delta^{n+1}} \int_0^\delta \mathcal{H}^n(\p B_s(x)) \eta\left(\frac{s}{\delta}\right) ds\leq \frac{1}{c_n} \frac{1}{\delta^{n+1}} (n+1)\om_{n+1}\int_0^\delta s^n \eta\left(\frac{s}{\delta}\right) ds + \frac{1}{c_n} \frac{1}{\delta^{n+1}}C_{N} (n+1)\om_{n+1} \int_0^\delta s^{n+2} \eta\left(\frac{s}{\delta}\right) ds$ and using $s^2\leq \delta^2$ in the second term we have $\int_{B_\delta(x)} \eta_\delta(x,y)  d\mathcal{H}^{n+1}(y)\leq \frac{1}{c_n}  (n+1)\om_{n+1}\int_0^1 t^n \eta\left(t\right) dt + \delta^2 \frac{1}{c_n} \frac{1}{\delta^{n+1}}C_{N} (n+1)\om_{n+1} \int_0^\delta s^{n} \eta\left(\frac{s}{\delta}\right) ds = 1+C_N \delta^2$. For the other inequality, namely $\int_{B_\delta(x)} \eta_\delta(x,y)  d\mathcal{H}^{n+1}(y)\geq 1-C_N \delta^2$, one proceeds similarly.

\medskip

From now on we take $\delta\leq \delta_0$. The convolution of an $L^\infty$ function $f:N\to \R$ with $\eta_\delta$ is the function $f\star \eta_\delta:N\to \R$ defined as follows: 

\be
\label{eq:def_mollify_function}
(f\star \eta_\delta)(x)=\int_{N} f(y)\eta_\delta(x,y) d\mathcal{H}^{n+1}(y).
\ee
This is a smooth function thanks to the smoothness of $\eta_\delta$ in $(x,y)$. Note that we have chosen a convolution kernel that does not integrate exactly to $1$, however (\ref{eq:int_eta}) suffices to ensure:

\begin{lem}
\label{lem:convolution_W12}
Let $f\in W^{1,\infty}(N)$. Then 

\noindent (i) $f\star \eta_\delta\to f$ uniformly on $N$;

\noindent (ii) $f\star \eta_\delta\to f$ in $W^{1,2}(N)$.
\end{lem}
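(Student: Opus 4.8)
\textbf{Proof strategy for Lemma \ref{lem:convolution_W12}.}

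The plan is to prove (i) and (ii) by the classical mollification arguments adapted to the fact that the kernel $\eta_\delta$ integrates to $1+O(\delta^2)$ rather than exactly to $1$, using the estimate (\ref{eq:int_eta}) to control the discrepancy. First I would fix $x\in N$ and work in normal coordinates centred at $x$; since $\delta\leq\delta_0<\text{inj}(N)$, the geodesic ball $B_\delta(x)$ is the diffeomorphic image of the Euclidean $\delta$-ball under $\exp_x$, and the pull-back of the Riemannian volume form differs from the Euclidean one by a factor $1+O(\delta^2)$ (again by the curvature bounds underlying the choice of $\delta_0$). For (i), I would write
$$(f\star\eta_\delta)(x)-f(x)=\int_{B_\delta(x)}\bigl(f(y)-f(x)\bigr)\eta_\delta(x,y)\,d\mathcal{H}^{n+1}(y)+f(x)\Bigl(\int_{B_\delta(x)}\eta_\delta(x,y)\,d\mathcal{H}^{n+1}(y)-1\Bigr).$$
The second term is bounded by $\|f\|_{L^\infty}C_N\delta^2$ by (\ref{eq:int_eta}); the first term is bounded by $(\text{Lip}\,f)\,\delta\cdot(1+C_N\delta^2)$ since $|f(y)-f(x)|\leq(\text{Lip}\,f)\,d(x,y)\leq(\text{Lip}\,f)\,\delta$ on $B_\delta(x)$ and $\eta_\delta\geq 0$. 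Both bounds are uniform in $x$ and tend to $0$, giving (i).

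For (ii), since $f\star\eta_\delta\to f$ uniformly by (i), in particular $f\star\eta_\delta\to f$ in $L^2(N)$, so it remains to treat the gradients. The key step is to show $\nabla(f\star\eta_\delta)\to\nabla f$ in $L^2(N)$. I would differentiate under the integral sign: $\nabla_x(f\star\eta_\delta)(x)=\int_N f(y)\nabla_x\eta_\delta(x,y)\,d\mathcal{H}^{n+1}(y)$, and then integrate by parts against the $W^{1,\infty}$ function $f$ on $B_\delta(x)$. Here one must be careful about the boundary term on $\partial B_\delta(x)$: because $\eta$ vanishes together with all its derivatives at $|x|=1$, the function $\eta_\delta(x,\cdot)$ and its first derivatives vanish on $\partial B_\delta(x)$, so the boundary term disappears and one obtains $\nabla_x(f\star\eta_\delta)(x)=\int_N \nabla f(y)\,\eta_\delta(x,y)\,d\mathcal{H}^{n+1}(y)+R_\delta(x)$, where $R_\delta$ is an error coming from the non-commutativity of $\nabla_x$ with the pull-back (i.e.\ from the $x$-dependence of the metric/geodesic distance in $\eta_\delta(x,y)$), bounded in $L^\infty$ by $C_N\|f\|_{L^\infty}$ times a factor $O(\delta)$ — this is the same type of estimate as the one behind $E_\delta$ in the discussion preceding the lemma, and in fact a weaker version of it. Granting this, $\nabla(f\star\eta_\delta)$ is, up to the $o(1)$ error $R_\delta$, the mollification of $\nabla f$ componentwise, and one applies the argument of (i) to the bounded (indeed $L^\infty$) function $\nabla f$ — but now only to conclude $L^2$ convergence, which is standard: mollification of an $L^2$ function converges to it in $L^2$, and the $O(\delta^2)$ mass defect of $\eta_\delta$ contributes only $O(\delta^2)\|\nabla f\|_{L^2}$.

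The main obstacle, and the only genuinely non-routine point, is the commutator/boundary analysis in the differentiation step for (ii): one must verify that $\nabla_x$ can be moved past $\eta_\delta(x,y)$ and converted (via integration by parts in $y$) into $\nabla_y$ acting on $f$, with an error that is $O(\delta)$ in $L^\infty$ rather than $O(1)$. This rests on the smooth decay of $\eta$ at the boundary of its support (killing the boundary term) and on the fact that, in normal coordinates centred at $x$, the derivatives of $d(x,y)^2$ in the $x$-direction agree with those in the $y$-direction up to curvature terms of size $O(d(x,y))=O(\delta)$. Once this is in place, (i) and (ii) follow by the elementary estimates above, using (\ref{eq:int_eta}) at each occurrence of the mass defect.
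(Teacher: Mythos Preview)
Your proposal is correct and follows essentially the same approach as the paper: part (i) is handled identically via the Lipschitz bound and the mass-defect estimate (\ref{eq:int_eta}), and for part (ii) the paper also reduces to showing that $\nabla(f\star\eta_\delta)$ differs from the mollification of $\nabla f$ (taken componentwise in a local orthonormal frame) by an $L^\infty$ error of order $\delta$, then invokes the standard $L^2$ convergence of mollification for the bounded function $\nabla f$ via a Lebesgue-point/dominated-convergence argument. The paper carries out your ``commutator'' step explicitly by writing, in normal coordinates centred at an arbitrary point, $\rho(x,y)=\rho_0(y-x)+\text{(curvature correction)}$ and tracking five error terms, but the mechanism is exactly what you outline.
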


\begin{proof}[proof of Lemma \ref{lem:convolution_W12} (i)]
For all $x$ we have $\int_N |f(y)-f(x)| \eta_\delta(x,y)d\mathcal{H}^{n+1}(y) = \int_N |f(y)-f(x)| \frac{\eta_\delta(x,y)}{1+O(\delta^2)} d\mathcal{H}^{n+1}(y) +  \int_N |f(y)-f(x)| \frac{O(\delta^2)}{1+O(\delta^2)} \eta_\delta(x,y)d\mathcal{H}^{n+1}(y)$, where $O(\delta^2)$ is the function in (\ref{eq:int_eta}). The first term is bounded by $L_f \int_N |x-y| \frac{\eta_\delta(x,y)}{1+O(\delta^2)} d\mathcal{H}^{n+1}(y)\leq L_f \delta$, where $L_f$ is the Lipschitz constant of $f$. The second term is bounded in absolute value by $\tilde{C}_N \|f\|_{C^0(N)} \delta^2$ for all sufficiently small $\delta$. Therefore $\int_N |f(y)-f(x)| \eta_\delta(x,y)d\mathcal{H}^{n+1}(y)$ tends to $0$ uniformly in $x$. Then we compute, recalling (\ref{eq:int_eta}),

$$(f\star \eta_\delta)(x) -f(x)= \int_{N} f(y)\eta_\delta(x,y) d\mathcal{H}^{n+1}(y) - \int_N f(x) \frac{\eta_\delta(x,y)}{1+O(\delta^2)} d\mathcal{H}^{n+1}(y)=$$
$$=\int_{N} (f(y)-f(x))\eta_\delta(x,y) d\mathcal{H}^{n+1}(y)+\int_N f(x) \frac{O(\delta^2)}{1+O(\delta^2)}\eta_\delta(x,y) d\mathcal{H}^{n+1}(y).$$
The last term is bounded in absolute value by $\tilde{C}_N \|f\|_{C^0(N)} \delta^2$ for all sufficiently small $\delta$. Therefore
\be
\label{eq:uniform_convergence_molli}
(f\star \eta_\delta) \to f \,\, \text{ uniformly on } N.
\ee
\end{proof}

\begin{proof}[proof of Lemma \ref{lem:convolution_W12} (ii)]
We can choose a finite cover of $N$ by geodesic balls of radius $\delta_0$ in which we fix a local orthonormal frame. In each ball $U\subset N$, we let $\{v_\ell\}_{\ell=1}^{n+1}$ denote the $g$-orthonormal frame. We can make the non-restrictive assumption that the collection of open sets $\tilde{U}$ obtained by setting $\tilde{U}=\{x\in U:\text{dist}(x,\p U)\geq \delta_0/2\}$ still constitutes a finite cover of $N$. Our final aim is to prove that for each $U$ and for every $\ell$ we have $\int_{\tilde{U}} \left|(\nabla (f\star \eta_\delta) -\nabla f)\cdot v_\ell\right|^2 \to 0$ as $\delta\to 0$. There are only finitely many open sets $\tilde{U}$, so this implies that $\int_N \left|\nabla (f\star \eta_\delta) - \nabla f\right|^2 \to 0$. (Here $|\quad|$ stands for the $g$-norm, $\nabla$ for the metric gradient and $\cdot$ for the $g$-scalar product of vectors.) 

We divide the proof in two parts. In step 1 we will show that, writing $v$ for one of the $v_\ell$, we have $(\nabla f\cdot v)\star \eta_\delta \to (\nabla f\cdot v)$ in $L^2(\tilde{U})$ (by the choice of $\tilde{U}$, these convolutions can be defined by staying inside $U$ for $\delta<\delta_0/2$). 
In step 2 we will prove that $(\nabla f \cdot v)\star \eta_\delta - \nabla (f\star \eta_\delta) \cdot v$
tends to $0$ in $L^\infty({\tilde{U}})$. The two steps together then give 
$$\int_{\tilde{U}} |\nabla (f\star \eta_\delta) \cdot v - \nabla f \cdot v|^2 \to 0$$
as $\delta\to 0$, which is our aim. 

Step 1. The first observation is that if $q\in L^\infty(N)$ then 

\be
\label{eq:almost_everywhere_Lebesgue}
\text{ for $\mathcal{H}^{n+1}$-a.e.~$x$ we have }\,\, \int_N |q(y)-q(x)| \eta_\delta(x,y)d\mathcal{H}^{n+1}(y) \to 0 \,\,\text{ as }\,\,\delta\to 0.
\ee
This follows by writing, as done for Lemma \ref{lem:convolution_W12} (i), $\int_N |q(y)-q(x)| \eta_\delta(x,y)d\mathcal{H}^{n+1}(y) = \int_N |q(y)-q(x)| \frac{\eta_\delta(x,y)}{1+O(\delta^2)} d\mathcal{H}^{n+1}(y) +  \int_N |q(y)-q(x)| \frac{O(\delta^2)}{1+O(\delta^2)} \eta_\delta(x,y)d\mathcal{H}^{n+1}(y)$. The second term tends to $0$ as argued earlier. The first term tends to $0$ if $x$ is a Lebesgue point of $q$ (hence for almost all $x$). Then we have, with $\nabla f \cdot v$ in place of $q$:

$$\int_{\tilde{U}} |((\nabla f\cdot v)\star \eta_\delta)(x) -(\nabla f\cdot v)(x)|^2 d\mathcal{H}^{n+1}(x)=$$
$$\int_{\tilde{U}} \left|\int_{U} \left((\nabla f\cdot v)(y) - (\nabla f\cdot v)(x)\right)\eta_\delta(x,y) d\mathcal{H}^{n+1}(y)+\right.$$ $$+\left.(\nabla f\cdot v)(x)\int_U \frac{O(\delta^2) \eta_\delta(x,y) }{1+O(\delta^2)}d\mathcal{H}^{n+1}(y) \right|^2 d\mathcal{H}^{n+1}(x) \underbrace{\leq}_{|a+b|^2\leq 2a^2 +2b^2} $$
$$2 \int_{\tilde{U}} \underbrace{\left|\int_{U} \left((\nabla f\cdot v)(y) - (\nabla f\cdot v)(x)\right)\eta_\delta(x,y) d\mathcal{H}^{n+1}(y)\right|^2}_{\to 0 \text{ by (\ref{eq:almost_everywhere_Lebesgue}) for a.e.~$x$}} d\mathcal{H}^{n+1}(x) + \tilde{C}_N \|\nabla f\|_{L^\infty(N)}^2 \delta^4 .$$
(In the last term, we have included $\mathcal{H}^{n+1}(\tilde{U})\leq \mathcal{H}^{n+1}(N)$ in the constant $\tilde{C}_N$.) The braced integrand in the first term tends to $0$ for a.e.~$x$ by (\ref{eq:almost_everywhere_Lebesgue}). Moreover, the braced expression is bounded for every $x$ by $4\|\nabla f\|_{L^{\infty}(N)}^2 (1+O(\delta^2))^2$, which is summable on $N$. Hence we can use dominated convergence to conclude that the first term tends to $0$ as $\delta \to 0$. The second tends to $0$ a well, therefore we conclude that
$$(\nabla f\cdot v)\star \eta_\delta \to (\nabla f\cdot v) \,\,\text{ in } L^2(\tilde{U}).$$ 

\textit{Step 2.} We compute the difference between the two (smooth) functions $(\nabla f \cdot v)\star \eta_\delta$ and $\nabla (f\star \eta_\delta) \cdot v$ and prove that it goes to $0$ uniformly on $\tilde{U}$. We work in normal coordinates centred at an arbitrary point $O\in \tilde{U}$, namely in the ball $D=\{x\in \R^{n+1}:|x|<\delta_0/2\}$, with exponential map $\text{exp}_O:D\to B_{\delta_0/2}(O)\subset U$. We will evaluate the difference of the two functions at $O$, making sure that the result does not depend on $O$. Since we are interested in $\nabla (f\star \eta_\delta) \cdot v$, we need to let $x$ vary in a neighbourhood of $O$ before evaluating the derivative, therefore we will assume $x\in \{x\in \R^{n+1}:|x|<\delta_0/4\}$ and $\delta<\delta_0/4$, so that $y$ stays in $D$.

We use the customary notation $g_{ij}$ for the metric coefficients, $\sqrt{|g|}$ for the volume density induced by $g$. 
We denote by $h$ the Lipschitz function on $D$ given by $f\circ \text{exp}_O:D\to \R$ and by $\rho:D\times D\to \R$ the mollifier $\rho(x,y)=\eta_\delta(\text{exp}_O(x), \text{exp}_O(y))$, for an arbitrary $\delta<\frac{\delta_0}{4}$. We point out that $\rho(0,y)=\frac{1}{c_n\delta^{n+1}}\eta\left(\frac{|y|}{\delta}\right)$ because we are in normal coordinates, where $|\cdot|$ denotes the Euclidean length. We write $\nabla_g$ to denote the metric gradient in $D$, $(\nabla_g)^i=g^{ij}\p_{x_j}$. Let $v_\ell$ be represented, in the chart, by $\sum v_{\ell}^{j}\p_{j}$. We fix an arbitrary $\ell$ and let $v=(v^1, \ldots, v^{n+1})=(v_\ell^1, \ldots, v_\ell^{n+1})$. We will write $\cdot$ between two vectors to denote the scalar product induced by $g$, so $\nabla_g h \cdot v = \sum g_{ij} g^{ia}\p_{x_a}h\, v^j=\delta_j^a \p_{x_a}h\, v^j=\p_{x_j}h \,v^j$ ($=dh(v)$). 
We restrict to $x\in D_{{\delta_0}/4}$ and we compute for $\delta<\frac{\delta_0}{4}$ the coordinate expression for $\nabla (f\star \eta_\delta) \cdot v$ (integration is in $dy$ unless otherwise specified):

\be
\label{eq:first_partial_conv}
\p_{x_j} \left(\int_D h(y)\rho(x,y)\sqrt{|g|}(y) dy \right) v^j(x) =
\ee
$$= v^j(x) \underbrace{\int_D h(y)\p_{x_j} (\rho(0,y-x))\sqrt{|g|}(y)}_{I}+\underbrace{v^j(x) \int_D h(y)\p_{x_j} (\rho(x,y)-\rho(0,y-x))\sqrt{|g|}(y)}_{II}.$$
Working on the first term, and using the notation $\rho(0,\cdot)=\rho_0(\cdot)$, we have
$$I= -\int_D h(y) (\p_{x_j} \rho_0)(y-x)\sqrt{|g|}(y)\,\,\underbrace{=}_{y-x=z}\,\, -\int_D  h(x+z) (\p_{x_j} \rho_0)(z)\sqrt{|g|}(x+z) dz=$$
$$= \int_D  (\p_{x_j} h)(x+z) \rho_0(z)\sqrt{|g|}(x+z)dz +  \underbrace{\int_D  h(x+z) \rho_0(z) (\p_{x_j}\sqrt{|g|})(x+z)dz}_{III}=$$
$$\underbrace{=}_{x+z=y}\,\, \int_D  (\p_{x_j} h)(y) \rho_0(y-x)\sqrt{|g|}(y) dy + III=  $$
$$=\int_D  (\p_{x_j} h)(y) \rho(x,y)\sqrt{|g|}(y) +\underbrace{\int_D  (\p_{x_j} h)(y) (-\rho(x,y)+\rho(0,y-x))\sqrt{|g|}(y)}_{IV}+ III.  $$
Consider the first term from the last line, recalling that $v^j(x)$ multiplies $I$ in (\ref{eq:first_partial_conv}):
$$v^j(x)\int_D  (\p_{x_j} h)(y) \rho(x,y)\sqrt{|g|}(y) =$$ 
$$= \underbrace{\int_D  v^j(y)(\p_{x_j} h)(y) \rho(x,y)\sqrt{|g|}(y)}_{((\nabla f \cdot v)\star \eta_\delta)(\text{exp}_O(x))} + \underbrace{\int_D (v^j(x)-v^j(y)) (\p_{x_j} h)(y) \rho(x,y)\sqrt{|g|}(y)}_{V}.$$
Having rewritten the first term in the last line, we evaluate (\ref{eq:first_partial_conv}) at $x=0$ to obtain 
$$(\nabla (f\star \eta_\delta ) \cdot v)(O) - ((\nabla f \cdot v)\star \eta_\delta)(O)= 
\left.V\right|_{x=0} +v^j(0)\, \left.IV\right|_{x=0} +v^j(0)\,\left.III\right|_{x=0} + \left.II\right|_{x=0}.$$ 
It is immediate that $\left.IV\right|_{x=0}=0$. In $V$ we have $\rho(0,y)=0$ for $d(0,y)=|y|\geq \delta$, therefore $|v^j(0)-v^j(y)|\leq C |y|$ for some constant $C$ that depends on derivatives of $v$ in $U$ and can be thus chosen independently of $U$ (there are finitely many $U$'s) and of $v_\ell$ (finitely many smooth vector fields). We therefore get that 
$\left.V\right|_{x=0}$ is bounded in modulus by $C \|\nabla f\|_{L^{\infty}} \delta (1+O(\delta^2))\leq C'  \|\nabla f\|_{L^{\infty}} \delta$ for some $C'$ that depends only on the choices of charts and vector fields.
In $\left.III\right|_{x=0}$, the integrand is non-zero only for $|z|\leq \delta$. Let $\tilde{C}_N>0$ be an upper bound for the modulus of the second derivatives of the volume element in a normal coordinate system of radius $\delta_0$ centred at an arbitrary point in $N$ (such a constant exists by the compactness of $N$, the smoothness of the metric and the fact that $\delta_0<\text{inj}(N)$). Recalling that in normal coordinates the metric coefficients have vanishing first derivatives at $0$, we get that $\left.|III|\right|_{x=0}\leq C\|f\|_{C^0} \delta$ for all $\delta\leq \delta_0$, with a constant $C$ that only depends on the geometric data. For $II$, recall that $\rho(x,y)=\frac{1}{c_n\delta^{n+1}}\eta\left(\frac{d(x,y)}{\delta}\right)$, where $d$ is the Riemannian distance (induced by $g$); so for each $y$ we have $\p_{x_j} \rho(\cdot,y)=\frac{1}{c_n\delta^{n+2}}\eta'\left(\frac{d(\cdot,y)}{\delta}\right)\p_{x_j} d(\cdot, y)$. On the other hand $\rho(0,y-x)=\frac{1}{c_n\delta^{n+1}}\eta\left(\frac{|y-x|}{\delta}\right)$ so for each $y$ we have $\p_{x_j} \rho(0,y-\cdot)=\frac{1}{c_n\delta^{n+2}}\eta'\left(\frac{|y-\cdot|}{\delta}\right)\p_{x_j} |y-\cdot|$. At $\cdot=0$ we have, for every $y\neq 0$, $\p_{x_j} |y-\cdot| = \p_{x_j} d(\cdot, y) =-\frac{y_j}{|y|}$, because we are in normal coordinates, and $d(0,y)=|y|$. Therefore $\left.II\right|_{x=0}=0$.

We have therefore proved that $(\nabla (f\star \eta_\delta ) \cdot v)(O) - ((\nabla f \cdot v)\star \eta_\delta)(O)\leq C\delta$ for $C$ independent of $O$ and so $|(\nabla f \cdot v)\star \eta_\delta-(\nabla (f\star \eta_\delta)\cdot v)|\to 0$ uniformly on $\tilde{U}$. 
\end{proof}

\begin{oss}
\label{oss:continuity_in_delta}
Also note that $\delta \in (0,\delta_0] \to (f\star \eta_\delta) \in W^{1,2}(N)$
is continuous, since $\eta_\delta$ changes smoothly with $\delta$ (in fact, this curve is differentiable on $(0,\delta_0)$). Similarly, $\delta \in (0,\delta_0] \to (f\star \eta_\delta) \in C^0(N)$ is continuous.
\end{oss}

Next we are going to be interested in $\Delta (f\star \eta_\delta)$ under the additional assumption on $f$ that $\nabla f\in BV(N)$. Here $\Delta$ denotes the Laplace-Beltrami operator. Recall that $f\star \eta_\delta$ is smooth, so $\Delta (f\star \eta_\delta)$ is smooth on $N$. We shall compare this function with $(\Delta f) \star \eta_\delta$, where $\Delta f$ is a Radon measure. For a Radon measure $\mu$ on $N$ we define the (smooth) function $\mu\star \eta_\delta:N\to \R$ as follows:

\be
\label{eq:def_mollify_Radon}
(\mu\star \eta_\delta)(x) = \int \eta_\delta(x,y) d\mu(y).
\ee

\begin{lem}
 \label{lem:second_derivative}
Let $f\in W^{1,\infty}(N)$ with $\nabla f \in BV(N)$. There exists $C_N>0$ depending only $N$ (in fact, on the curvature of $(N,g)$) such that, for all $\delta<\delta_0$,
$$\|(\Delta f) \star \eta_\delta - \Delta (f\star \eta_\delta)\|_{L^\infty(N)}\leq C_N \|f\|_{L^\infty(N)}.$$ 
\end{lem}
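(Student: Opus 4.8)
The plan is to exploit the self-adjointness of the Laplace--Beltrami operator on the closed manifold $N$ in order to write both $(\Delta f)\star\eta_\delta$ and $\Delta(f\star\eta_\delta)$ as integrals of $f$ against second derivatives of the kernel, and then to estimate the difference of those two kernels. Since $f\in W^{1,1}(N)$, the distribution $\Delta f$ acts on $\phi\in C^\infty(N)$ by $\langle\Delta f,\phi\rangle=-\int_N\nabla f\cdot\nabla\phi\,d\mathcal{H}^{n+1}=\int_N f\,\Delta\phi\,d\mathcal{H}^{n+1}$ (integrating by parts, with no boundary term); under the hypothesis $\nabla f\in BV(N)$ this distribution is a Radon measure, so for each fixed $x$ we may take $\phi=\eta_\delta(x,\cdot)\in C^\infty(N)$ and obtain
\[
((\Delta f)\star\eta_\delta)(x)=\int_N\eta_\delta(x,y)\,d(\Delta f)(y)=\int_N f(y)\,\Delta_y\eta_\delta(x,y)\,d\mathcal{H}^{n+1}(y).
\]
On the other hand, differentiating under the integral sign (legitimate because $\eta_\delta\in C^\infty(N\times N)$) gives $\Delta(f\star\eta_\delta)(x)=\int_N f(y)\,\Delta_x\eta_\delta(x,y)\,d\mathcal{H}^{n+1}(y)$, so that
\[
\Delta(f\star\eta_\delta)(x)-((\Delta f)\star\eta_\delta)(x)=\int_N f(y)\big(\Delta_x\eta_\delta(x,y)-\Delta_y\eta_\delta(x,y)\big)\,d\mathcal{H}^{n+1}(y);
\]
thus it suffices to prove $\sup_{x\in N}\int_N\big|\Delta_x\eta_\delta(x,y)-\Delta_y\eta_\delta(x,y)\big|\,d\mathcal{H}^{n+1}(y)\le C_N$ for all $\delta<\delta_0$.

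To estimate this I would use the radial structure of the kernel. Writing $\eta_\delta(x,y)=\psi(d(x,y))$ with $\psi(r)=(c_n\delta^{n+1})^{-1}\eta(r/\delta)$, and using that $|\nabla_yd(x,\cdot)|=|\nabla_xd(\cdot,y)|=1$ inside the injectivity radius, one has $\Delta_y\eta_\delta(x,y)=\psi''(d)+\psi'(d)\Delta_yd(x,y)$ and $\Delta_x\eta_\delta(x,y)=\psi''(d)+\psi'(d)\Delta_xd(x,y)$, so the $\psi''$ terms cancel and $\Delta_x\eta_\delta(x,y)-\Delta_y\eta_\delta(x,y)=\psi'(d(x,y))\big(\Delta_xd(x,y)-\Delta_yd(x,y)\big)$. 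Now $\Delta_yd(x,\cdot)(y)$ equals, up to the sign convention, the mean curvature of the geodesic sphere $\partial B_{d(x,y)}(x)$ at $y$, and by the estimate on $H_{x,\delta}$ recorded in the ``Choice of $\delta_0$'' paragraph above it equals $\pm n/d(x,y)+O(C_N\,d(x,y))$; the same holds for $\Delta_xd(x,y)$ with $\partial B_{d(x,y)}(y)$ in place of $\partial B_{d(x,y)}(x)$. Hence the singular parts cancel and $|\Delta_xd(x,y)-\Delta_yd(x,y)|\le 2C_N\,d(x,y)$ for $d(x,y)<\delta_0$. Since $|\psi'(r)|\le\|\eta'\|_\infty(c_n\delta^{n+2})^{-1}$ and $\psi'$ vanishes for $r\ge\delta$, we get $|\Delta_x\eta_\delta(x,y)-\Delta_y\eta_\delta(x,y)|\le 2C_N\|\eta'\|_\infty(c_n\delta^{n+1})^{-1}$ on $B_\delta(x)$ and $0$ elsewhere; integrating over $B_\delta(x)$, whose volume is $\le C_N'\,\delta^{n+1}$ by the geodesic-sphere area estimate already used to prove \eqref{eq:int_eta}, bounds the integral by a constant depending only on $n$ and the curvature of $(N,g)$. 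This is the assertion.

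The main points requiring care are bookkeeping ones. First, the self-adjointness identity must be justified at the level of a merely $W^{1,\infty}$ function $f$ with $\Delta f$ a Radon measure; this is routine once one recalls that $\langle\Delta f,\cdot\rangle$ is \emph{by definition} $-\int\nabla f\cdot\nabla(\cdot)$ and integrates by parts against the smooth test function $\eta_\delta(x,\cdot)$. Second, one must check that the apparent singularity of $\Delta_yd(x,y)$ as $y\to x$ causes no trouble, and this is exactly where the cancellation of the $\pm n/d(x,y)$ contributions in $\Delta_x\eta_\delta$ and $\Delta_y\eta_\delta$ is used (one also notes that $\psi'(0)=0$ because $\eta$ is even, so the integrand in fact vanishes near $y=x$). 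The constant $C_N$ obtained this way depends only on bounds for the sectional curvature and the injectivity radius of $N$ together with the universal constants $c_n$ and $\|\eta'\|_\infty$, exactly as stated.
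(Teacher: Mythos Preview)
Your proof is correct and is, in fact, cleaner than the paper's. Both arguments rest on the same geometric input (the estimate $|H_{x,r}+n/r|\le C_N r$ for the mean curvature of small geodesic spheres), but they reach the key cancellation by different paths.

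The paper works in a normal coordinate chart centred at an arbitrary point $O$, and compares the kernel $\rho(x,y)=\eta_\delta(\exp_O x,\exp_O y)$ with its Euclidean translate $\rho_0(y-x):=\rho(0,y-x)$. Computing $\Delta_x$ of the convolution and decomposing into several pieces (labelled $I,II,III$), the paper isolates a term that, after an integration by parts, becomes $((\Delta f)\star\eta_\delta)(O)$; the remainders $II$ and $III$ are then bounded by estimating (i) the second derivatives of the metric coefficients in normal coordinates, and (ii) the difference between the Riemannian and Euclidean Laplacians of the distance function at the origin, which again reduces to the sphere mean-curvature estimate.

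You short-circuit the coordinate computation by using self-adjointness of $\Delta$ on the closed manifold to write $((\Delta f)\star\eta_\delta)(x)=\int f\,\Delta_y\eta_\delta(x,\cdot)$ directly, so that the difference with $\Delta_x(f\star\eta_\delta)$ is $\int f\,(\Delta_x-\Delta_y)\eta_\delta$. The radial structure $\eta_\delta(x,y)=\psi(d(x,y))$ then kills the $\psi''$ contribution outright, leaving only $\psi'(d)\,(\Delta_x d-\Delta_y d)$, and the sphere mean-curvature estimate finishes the job in one line. Your route avoids the normal-coordinate bookkeeping entirely; the paper's route, by contrast, makes explicit where each error term comes from in local coordinates (which may be useful if one later needs finer asymptotics in $\delta$). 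One very small quibble: your parenthetical that ``the integrand in fact vanishes near $y=x$'' is slightly imprecise---it tends to zero as $y\to x$ (since $\psi'(0)=0$ and $\Delta_x d-\Delta_y d=O(d)$) rather than vanishing on a neighbourhood, but this is exactly what is needed to dispose of the apparent singularity and does not affect the argument.
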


\begin{proof}
We work in a normal system of coordinates centred at an arbitrary $O\in N$. Let $D$ be the ball centred at $0\in \R^{n+1}$ of radius $\delta_0$, with $\text{exp}_O:D \to B_{\delta_0}(O)$ denoting the exponential map. 
We keep notation as in the proof of step 2 of Lemma \ref{lem:convolution_W12} (ii), in particular we set $\rho(x,y)=\eta_\delta(\text{exp}_O(x), \text{exp}_O(y))$ and $\rho_0(\cdot)=\rho(0, \cdot)$. The Laplace-Beltrami operator $\Delta$ is, in the coordinate chart, $\frac{1}{\sqrt{|g|}} \frac{\p}{\p x_i}\left(\sqrt{|g|}g^{ij}\frac{\p}{\p x_j}\right)$, so $\Delta f$ is $\frac{1}{\sqrt{|g|}} \frac{\p}{\p x_i}\left(\sqrt{|g|}g^{ij}\frac{\p h}{\p x_j}\right)$, where $h=f\circ \exp_O$. We compute $\Delta (f\star \eta_\delta)(x)$ in the normal chart, keeping $x\in \{|\cdot|\leq \delta_0/2\}$ and $\delta<\delta_0/2$ so that $y\in D$ in the following computations. Differentiating (with respect to the $x$-variables),

\be
\label{eq:second_partial_conv}
\Delta \left(\int_D h(y)\rho(x,y)\sqrt{|g|}(y) dy \right)=
\ee
$$= \underbrace{\int_D h(y)\Delta (\rho_0(y-x))\sqrt{|g|}(y)}_{I}+\underbrace{\int_D h(y)\Delta (\rho(x,y)-\rho(0,y-x))\sqrt{|g|}(y)}_{II},$$
where derivatives are taken in $x$ and integration is in $dy$. We compute, for each $y$:

$$\Delta (\rho_0(y-x)) = -\frac{1}{\sqrt{|g|(x)}} \p_{x_i}\left(\sqrt{|g|(x)}g^{ij}(x)\right)\left(\p_{x_i}\rho_0\right)(y-x) + g^{ij}(x) (\p^2_{x_i x_j} \rho_0)(y-x);$$
$$(\Delta \rho_0)(y-x) = \frac{1}{\sqrt{|g|(y-x)}} \p_{x_i}\left(\sqrt{|g|(y-x)}g^{ij}(y-x)\right)\left(\p_{x_i}\rho_0\right)(y-x) + g^{ij}(y-x) (\p^2_{x_i x_j} \rho_0)(y-x).$$
Therefore

\be
\label{eq:diff_coeff_g}
\Delta (\rho_0(y-x))-(\Delta \rho_0)(y-x) =\left(g^{ij}(x)- g^{ij}(y-x) \right)(\p^2_{x_i x_j} \rho_0)(y-x)-
\ee
$$\left(\frac{1}{\sqrt{|g|(x)}} \p_{x_i}\left(\sqrt{|g|(x)}g^{ij}(x)\right)+\frac{1}{\sqrt{|g|(x-y)}} \p_{x_i}\left(\sqrt{|g|(x-y)}g^{ij}(x-y)\right)\right)\left(\p_{x_i}\rho_0\right)(y-x) $$
and we can rewrite $I$ as follows (so that in the second term we will be able to use (\ref{eq:diff_coeff_g})):

$$I=\int h(x+z) (\Delta \rho_0)(z)\sqrt{|g|}(x+z) dz+ \underbrace{\int h(y) \left(\Delta (\rho_0(y-x))-(\Delta \rho_0)(y-x)\right) \sqrt{|g|}(y)}_{III}.$$ 
We want to evaluate at $x=0$. Let $\tilde{\rho}_0=\rho_0\circ \text{exp}_O^{-1}$ and recall that $f=h\circ \text{exp}_O^{-1}$, then the first term on the right-hand-side of the last equality, evaluated at $x=0$, is $\int_N  f\, \Delta \tilde{\rho}_0 \, d\mathcal{H}^{n+1}$. Integrating by parts we rewrite is as $\int_N  \Delta f\, \tilde{\rho}_0\, d\mathcal{H}^{n+1}$ and we get

$$\left.I\right|_{x=0}=\int \Delta h(z) \rho_0(z)\sqrt{|g|}(z) dz+ \left.{III}\right|_{x=0}=$$ 
$$ =\int \Delta h(y) \rho(0,y)\sqrt{|g|}(y) dy+ \left.{III}\right|_{x=0}=((\Delta f)\star \eta_\delta)(O) + \left.{III}\right|_{x=0}.$$
Recall (\ref{eq:second_partial_conv}); the statement of Lemma \ref{lem:second_derivative} will therefore follow by estimating $\left.{II}\right|_{x=0}$ and $\left.{III}\right|_{x=0}$, taking care that the estimates should be independent of $O$. For $\left.{III}\right|_{x=0}$, we use (\ref{eq:diff_coeff_g}) and the following two facts. Firstly, $\int \p^2_{ij} \rho_0(y) dy = \frac{1}{c_n \delta^2} \left(\int_{B_1}\p^2_{ij}\eta_1(0,y)dy\right)$ and $\int \p_{i} \rho(y) dy = \frac{1}{c_n \delta} \left(\int_{B_1} \p_i \eta_1(0,y)dy\right)$ (the two integrals on the right-hand-sides depend only on $\eta$, which is fixed, so they can be absorbed into constants). Secondly, since we are in normal coordinates, $g^{ij}(0)=\delta^{ij}$, $\p_{x_k} g_{ij} = 0$ at $0$ for all $k$; since $N$ is compact, there exists a constant $C_{N,\delta_0}$ such that in any normal system of coordinates centred at a point of $N$ and with radius $\delta_0$ ($<\text{inj}(N)$), the second derivatives of the metric coefficients are bounded in modulus by $C_{N,\delta_0}$. Therefore

\noindent $\left|g^{ij}(0)- g^{ij}(-y) \right|\leq C_{N,\delta_0} |y|^2$ and $\left| \frac{1}{\sqrt{|g|(-y)}} \left.\p_{x_i}\right|_{x=0}\left(\sqrt{|g|(x-y)}g^{ij}(x-y)\right)\right|\leq C_{N,\delta_0}|y|$.
Using these two facts in (\ref{eq:diff_coeff_g}), and noting that $|y|\leq \delta$ on the set where the integrand of $\left.{III}\right|_{x=0}$ does not vanish, we get that $\left.{III}\right|_{x=0}$ is bounded in modulus by $\|f\|_{L^\infty} C_{n,N,\delta_0} \|\eta\|_{C^2(\R)} = C \|f\|_{L^\infty}$, with $C$ depending only on fixed geometric data.

\noindent For $\left.{II}\right|_{x=0}$ we need to compare, for each $y\neq 0$, $\Delta (\rho(x,y))$ and $\Delta (\rho_0(y-x))$, both evaluated at $x=0$. Let us write $m_\delta (\cdot)= \frac{1}{c_n\delta^{n+1}} \eta\left(\frac{\cdot}{\delta}\right)$, $m_\delta:\R\to \R$. Then, denoting by $d$ the distance induced by $g$ and by $|\quad|$ the Euclidean distance, by $|\quad|_g$ the vector length for $g$, and by $\nabla$ the $g$-gradient, we get for each $y\neq 0$ (derivatives with respect to $\cdot$)
$$\Delta (\rho(\cdot,y)) = \Delta \left( m_\delta(d(\cdot,y)) \right)= m_\delta^{''}(d(\cdot,y))|\nabla d(\cdot,y)|_g^2 + m_\delta^{'}(d(\cdot,y))\Delta d (\cdot,y),$$
$$\Delta (\rho_0(y-\cdot)) = \Delta \left(m_\delta(|y-\cdot|) \right)= m_\delta^{''}(|y-\cdot|)\left|\,\nabla |y-\cdot|\,\,\right|_g^2 + m_\delta^{'}(|y-\cdot|)\Delta |y-\cdot|.$$
Evaluating at $\cdot=0$ we note that $m_\delta^{''}(d(0,y))|\nabla d(0,y)|_g^2 = m_\delta^{''}(|y|)|\nabla |y||_g^2$ and $m_\delta^{'}(d(0,y)) = m_\delta^{'}(|y|)$, because in normal coordinates we have $d(0,y)=|y|$ and $\nabla |y-\cdot|=\nabla d(\cdot,y)=-\frac{y}{|y|}$ at the point $\cdot=0$ (for any chosen $y\neq 0$). We therefore need to compare, for any $y\neq 0$, $\Delta d(\cdot,y)$ and $\Delta |y-\cdot|$ at $0$. The former is the opposite of the mean curvature at $0$ of a geodesic sphere centred at $y$ with radius $d(0,y)$ (as usual, we compute the scalar mean curvature with respect to the outward-pointing normal to the sphere). On the other hand, recall that computing $\Delta$ at $0$ is the same as computing the Euclidean Laplacian, therefore $-\Delta |y-\cdot|$ at $0$ is the Euclidean mean curvature at $0$ of a Euclidean sphere centred at $y$ with radius $|y|$, hence $\Delta |y-\cdot|=-\frac{n}{|y|}$ at $0$. The difference $\Delta d(\cdot,y)-\Delta |y-\cdot|$ is therefore bounded in modulus by $C_N |y|$, thanks to the initial choice of $\delta_0$. Since we can take $|y|\leq \delta$ in $\left.{II}\right|_{x=0}$ (because $\rho=0$ otherwise) we can estimate $\left.\left(\Delta (\rho(x,y))-\Delta (\rho_0(y-x))\right)\right|_{x=0}$ in modulus by $\|m_\delta^{'}\|_{L^\infty} C_N \delta\leq \frac{1}{\delta^{n+1}}C_{\eta} C_N$; integrating on $\{|y|\leq \delta\}$ we get that $\left.{II}\right|_{x=0}$ is bounded in modulus by $C_\eta C_N \|f\|_{L^\infty}$. 

We have thus obtained $|(\Delta f) \star \eta_\delta - \Delta (f\star \eta_\delta)|(O)\leq C \|f\|_{L^\infty}$ with $C$ depending only on $(N,g)$ and on the fixed entities $\delta_0$, $\eta$. The arbitraryness of $O$ gives the result.
\end{proof}

\end{document}